\documentclass[a4paper]{amsart}
\pdfoutput=1

\usepackage[T1]{fontenc}
\usepackage{lmodern}
\linespread{1.02}
\usepackage[spacing,kerning, tracking]{microtype}



\usepackage[style=numams, sorting=nyt, isbn=false,  backref=true, natbib=true]{biblatex}
\addbibresource{biblio.bib}
\addbibresource{abkue.bib}
\addbibresource{papersk.bib}


\usepackage{amsmath,amssymb, amscd}

\usepackage{enumitem}
\usepackage{verbatim}

\usepackage[pdfa]{hyperref}




\newcommand{\ichselber}{Frieder Ladisch}
\newcommand{\meineaddresse}{Universit\"{a}t Rostock,
                        Institut f\"{u}r Mathematik,
                        Ulmenstr.~69, Haus~3,
                        18057 Rostock,
                        Germany}
\newcommand{\meineemail}{frieder.ladisch@uni-rostock.de}


\newcommand{\titellang}{Character correspondences induced by magic representations}
\newcommand{\titellangsep}{Character correspondences \\ induced by magic representations}
\newcommand{\titelkurz}{Character correspondences}

\newcommand{\komma}{, }
\newcommand{\stichwoerter}{character theory of finite groups\komma
   character correspondences\komma
   Clifford theory\komma
   modular representations\komma
   rationality properties of characters\komma
   Schur indices\komma
   Glauberman correspondence}

\newcommand{\mscwert}{Primary 20C15, Secondary 20C20, 20C25}


\hypersetup{pdfauthor=\ichselber,
            pdftitle={\titellang},
            pdfkeywords={\stichwoerter ,
                        2010 Mathematics subject classification: \mscwert}
            }


\usepackage{mathbef}


\newcommand{\alcl}[1]{\overline{#1}}    


\begin{document}
\title[\titelkurz]{\titellangsep}
\author{\ichselber}
\address{\meineaddresse}
\email{\meineemail}
\subjclass[2010]{\mscwert}
\keywords{\stichwoerter}
%
\begin{abstract}
Let $G$ be a finite group, $K$ a normal subgroup of $G$ and
  $H\leq G$ such that
  $G=HK$, and set $L=H\cap K$.
  Suppose $\vartheta \in \operatorname{Irr} K$ and
  $\varphi\in \operatorname{Irr} L$, and $\varphi$
  occurs in $\vartheta_L$ with multiplicity $n>0$.
  A projective representation of degree $n$ on $H/L$ is defined in
  this situation; if this representation is ordinary, it yields a
  Morita equivalence between $\mathbb{C}G e_{\vartheta}$ and
  $\mathbb{C}H e_{\varphi}$, and thus a
  bijection between
  $\operatorname{Irr}(G\mid \vartheta)$ and
  $\operatorname{Irr}(H\mid \varphi)$.
  The behavior of fields of values and Schur indices under this
  bijection is described.
  A modular version of the main result is proved.
  We show that the theory applies if $n$ and
  $\lvert H/L \rvert$ are coprime.
  Finally, assume that $P\leq G$ is a $p$-group
  with $P\cap K=1$ and $PK$ normal in $G$, that
  $H=\operatorname{\mathbf{N}}_G(P)$, and that
  $\vartheta$ and $\varphi$ belong to
  blocks of $p$-defect zero which are Brauer correspondents with
  respect to the group $P$.
  Then every block of $\mathbb{F}_pG$ or $\mathbb{Q}_pG$ lying
  over $\vartheta$ is Morita-equivalent to its Brauer
  correspondent with respect to $P$.
  This strengthens a  result of
  Turull~[\emph{Above the {G}lauberman correspondence}, Advances in
  Math. \textbf{217} (2008), 2170--2205].

\end{abstract}
\maketitle
\section{Introduction}
  Let $G$ be a finite group, $K\nteq G$ and $H\leq G$ such that
  $G=HK$, and set $L=H\cap K$.
  Suppose $\theta\in \Irr K$ and $\phi\in \Irr L$ are
  invariant in $G$ respective $H$.
  We wish to compare the sets of characters
  $\Irr(G\mid \theta)$ and $\Irr(H\mid \phi)$.
  Questions of this type occur in many applications, and so this
  situation has already been studied
  by many authors~\citep{dade76, dade80, i73, i82, imana07, turull08c}.
  In
  particular, there are many results that construct,
  in special cases of the above situation,
  a bijection between
  $\Irr(G\mid \theta)$ and $\Irr(H\mid \phi)$.

  Here we will assume that $(\theta_L, \phi)= n > 0$, that is,
  $\phi$ occurs in the restriction of $\theta$ to $L$ with
  non-zero multiplicity $n$.
  In this situation, an $n$\nbd dimensional projective
  representation of $H/L$ arises naturally, as we will see.
  If this representation turns out to be projectively equivalent
  to an ordinary, ``honest'' representation, it yields a bijection
  between $\Irr(G\mid \theta)$ and $\Irr(H\mid \phi)$.
  This establishes a new, quite general approach for finding such
  a character correspondence.

  Let us give a somewhat more detailed outline of the idea.
  Let $e_{\theta}\in \compl K$ and $e_{\phi}\in \compl L$ be the
  central primitive idempotents associated with the characters
  $\theta $ and $\phi$
  and set $i= e_{\theta}e_{\phi}$.
  This is a nonzero idempotent in $\compl K$.
  Let $S= (i\compl K i)^L = \C_{i\compl K i}(L)$,
  the centralizer of $L$ in $i\compl K i $.
  It can be shown that $S\iso \mat_n(\compl)$, an
  $n \times n$\nbd matrix ring over $\compl$.
  The group $H/L$ acts on $S$ by conjugation,
  and for every $Lh\in H/L$ there is $\sigma(Lh)\in S^{*}$ such that
  $s^h = s^{\sigma(Lh)}$ for all $s\in S$.
  This defines a projective representation
  $\sigma\colon H/L \to S$.
  If $\sigma $ is multiplicative, we call it a
  ``magic'' representation.
  We will show that every magic representation $\sigma$ determines uniquely
  a bijection
  \[\iota=\iota(\sigma)\colon
     \Irr(G\mid \theta) \to
     \Irr(H\mid \phi)
  \]
  with good compatibility properties
  (see Theorem~\ref{t:corr} for the precise statement).
  A property particular to our situation is the
  following:
  Let $\psi$ be the character of the magic
  representation $\sigma$. Then for $\chi\in \Irr(G\mid \theta)$
  we have
  \[ \sum_{\xi \in \Irr(H\mid \phi)} (\chi_H, \xi) \xi
     = \psi \chi^{\iota}.
  \]
  Note that the left hand side is the part of $\chi_H$ that
  lies above $\phi$.

  As most readers probably know, the
  $G$\nbd invariant character
  $\theta$ determines a
  cohomology class of $G/K$,
  that is, an element of $ H^2(G/K, \compl^*)$.
  We denote this element by $[\theta]_{G/K}$.
  Similarly, $\phi$ determines a cohomology class
  $[\phi]_{H/L}\in H^2(H/L, \compl^*)$.
  As $G/K\iso H/L$ canonically, it is meaningful to compare
  $[\theta]_{G/K}$ and $[\phi]_{H/L}$.
  One can show that $[\theta]_{G/K}[\phi]_{H/L}^{-1}$
  is just the cohomology class belonging to
  the projective representation
   $\sigma$ described above.
  This also explains the existence of the character bijection,
  since it is known that $[\theta]_{G/K}$ determines
  $\Irr(G\mid \theta)$, in some sense~\citep[Chapter~11]{isaCTdov},
  and similarly for $\phi$.

  The results described so far are proved in
  Section~\ref{sec:magic} and~\ref{sec:corr}.
  In fact, we work with a field $\crp{F}$ containing the values of
  $\theta$ and $\phi$ instead
  of $\compl$.
  We show also that our correspondence has good rationality
  properties.

  In Section~\ref{sec:magiccrossed}, we go one step further and
  skip the assumption that $\theta$ and $\phi$ are invariant in
  $H$.
  This can not be handled simply  by the well known Clifford
  correspondence between $\Irr(G\mid \theta)$ and
  $\Irr(G_{\theta}\mid \theta)$ for the following reason:
  Suppose $\xi \in \Irr(G_{\theta}\mid \theta)$ and
  $\chi=\xi^G$ is its Clifford correspondent.
  Then $\rats(\chi) \subseteq \rats(\xi)$, but this may be a
  strict containment. In particular, we may have
  $\rats(\theta) \nsubseteq \rats(\chi)$.
  Of course  $\rats(\chi_K)\subseteq \rats(\chi)$,
  and $\rats(\chi_K)$ depends only on $\theta$ and the embedding of
  $K$ in $G$, but not on $\chi$ itself.
  Since we want results as general as possible
  about the behavior of fields of values and
  Schur indices under our character correspondences,
  we have to work over a field not necessarily containing
  the values of $\theta$ and $\phi$.
  This involves various technical difficulties. Some preliminary
  material is contained in Section~\ref{sec:semi-inv}.

  In Section~\ref{sec:coprim} we show that the results apply when
  $n=(\theta_L, \phi)$ and $\abs{H/L}$ are coprime.
  This slightly generalizes a result of Schmid~\citep{schmid88}.

  In Section~\ref{sec:induc} we prove a technical result that can
  be used for inductive proofs, and
  in Sections~\ref{sec:lifting} and~\ref{sec:reduc} we study
  relations with modular representation theory.

  Finally in Section~\ref{sec:aboveglaub} we assume that,
  additionally, we have a $p$\nbd subgroup $P\leq G$ such that
  $KP \nteq G$ and $P\cap K=1$, that
  $\theta$
  and $\phi$ have $p$-defect zero
  (in $K$ respectively in $L$) and that their blocks are Brauer
  correspondents with respect to $P$, and we let $H=\N_G(P)$.
  (If $K$ happens to be a $p'$\nbd group, then
   $\theta$ and $\phi$ are Glauberman correspondents.)
  We show that our theory applies in this situation, thus getting
  Morita equivalences between group algebras
  over the prime field with $p$ elements and over the field of
  $p$\nbd adic numbers (Corollaries~\ref{c:glaubbr}
  and~\ref{c:glaub}). In fact, we even get Morita equivalences
  between blocks over $\theta$ and $\phi$ which are Brauer
  correspondents with respect to $P$.
  This generalizes a recent result of
  Turull~\citep{turull08c}
  and provides an alternative proof thereof.

  The theory of magic representations also applies above
  fully ramified sections of a group, as studied by
  Isaacs~\citep{i73} and others. Indeed, much more information on
  the character of the magic representation is known in this case.
  In particular, in another paper~\cite{ladisch11pre}
  we use the methods of the present paper to show
  that a character correspondence described by Isaacs preserves
  Schur indices over the rational numbers.

  The results of this paper are part of my doctoral thesis~\citep{ladisch09diss},
  but some of them appear here in a more general form than
  in my thesis.

\section{Central forms and central simple
subalgebras}\label{sec:cssa}
We review some preliminary material in this section that we need
later.
Let $C$ be a ring. We write $\mat_n(C)$ to denote the ring of
$n\times n$\nbd matrices with entries in $C$.
Let $E_{ij}$ be the matrix such that the entry with index $(i,j)$
is $1_C$, and all the other entries are $0_C$.
The set $\{ E_{ij}\mid i,j=1, \dotsc, n\}$ has the following
properties:
\[
E_{ij}E_{kl}= \delta_{jk}E_{il}\quad \text{and}\quad
    \sum_{i=1}^n E_{ii} = 1.
 \]
Now let $A$ be any ring. Any subset
$E=\{E_{ij}\mid i,j=1,\dotsc, n\}$ of $A$ with these properties is called
a full set of matrix units in $A$.
If such a full set of matrix units exists, then
$A\iso\mat_n(C)$, where $C=\C_A(E)$~\citep[17.4-17.6]{lamMR}.
The isomorphism sends the $n\times n$\nbd matrix $(c_{ij})$ to
$\sum_{ij}c_{ij}E_{ij}$.

Now suppose that $A$ is an $R$\nbd algebra,
where $R$ is some commutative ring, and
$E\subseteq A$ is a full set of matrix units.
Let $S$ be the $R$\nbd subalgebra generated by $E$.
Then, by the result cited above, we have
$S\iso \mat_n(R)$.
It is also clear that
$\C_A(S)=\C_A(E)=: C$.
It follows that $A\iso S\tensor_{R} C$ canonically,
via $s\tensor c\mapsto sc$.

In the next few results, we will consider the following situation:
Let $\crp{F}$ be a field and
$S$ a central simple $\crp{F}$\nbd algebra.
Suppose $S$ is a subalgebra of the
$\crp{F}$\nbd algebra $A$ and set
$C=\C_A(S)$.
The following lemma is probably well known:
\begin{lemma}\label{l:central}
  $S \tensor_{\crp{F}} C\iso A$ canonically
  (via $s\tensor c \mapsto sc$).
\end{lemma}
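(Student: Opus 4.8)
The map $\mu\colon S\tensor_{\crp{F}}C\to A$ given by $s\tensor c\mapsto sc$ is a well-defined homomorphism of $\crp{F}$\nbd algebras: it is $\crp{F}$\nbd bilinear because $\crp{F}$ is central in $A$, and it is multiplicative because every element of $C$ commutes with every element of $S$. So the content of the lemma is precisely that $\mu$ is bijective. The plan is to reduce to the case where $S$ is split and then invoke the discussion preceding the lemma.

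First I would treat the \emph{split case}, $S\iso\mat_n(\crp{F})$. Here the standard matrix units of $S$ form a full set of matrix units $E$ in $A$, and they generate $S$ as an $\crp{F}$\nbd algebra, so $\C_A(S)=\C_A(E)=C$. By the remarks just before the lemma, $A\iso S\tensor_{\crp{F}}C$ canonically via $s\tensor c\mapsto sc$; that is, $\mu$ is an isomorphism in this case.

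For the general case, note that $S$ is finite-dimensional over $\crp{F}$, so there is a splitting field $\crp{E}\supseteq\crp{F}$ with $S_{\crp{E}}:=S\tensor_{\crp{F}}\crp{E}\iso\mat_n(\crp{E})$. Since $\crp{E}$ is a free $\crp{F}$\nbd module, I may regard $S_{\crp{E}}$ as a subalgebra of $A_{\crp{E}}:=A\tensor_{\crp{F}}\crp{E}$. The key point is the base-change identity
\[ \C_{A_{\crp{E}}}(S_{\crp{E}})=C\tensor_{\crp{F}}\crp{E}=:C_{\crp{E}}, \]
which one verifies by fixing an $\crp{F}$\nbd basis of $\crp{E}$ and expanding the commutation relations coordinatewise: an element of $A_{\crp{E}}$ centralizes $S_{\crp{E}}$ if and only if all of its coordinates with respect to that basis lie in $C$. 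Applying the split case to $S_{\crp{E}}\subseteq A_{\crp{E}}$ shows that the canonical map $S_{\crp{E}}\tensor_{\crp{E}}C_{\crp{E}}\to A_{\crp{E}}$ is an isomorphism, and under the canonical identification $S_{\crp{E}}\tensor_{\crp{E}}C_{\crp{E}}\iso(S\tensor_{\crp{F}}C)\tensor_{\crp{F}}\crp{E}$ this map is exactly $\mu\tensor_{\crp{F}}\operatorname{id}_{\crp{E}}$. Thus $\mu$ becomes an isomorphism after tensoring with $\crp{E}$ over $\crp{F}$; since that base change is exact and $\crp{F}$ embeds as an $\crp{F}$\nbd module direct summand of $\crp{E}$, the kernel and cokernel of $\mu$ inject into those of the isomorphism $\mu\tensor\operatorname{id}_{\crp{E}}$, so they vanish and $\mu$ is an isomorphism.

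The only step I expect to need genuine care is the centralizer identity $\C_{A_{\crp{E}}}(S_{\crp{E}})=C_{\crp{E}}$ (and the faithfully flat descent of ``being an isomorphism''); everything else is bookkeeping. As an alternative that avoids extending scalars, one can use that $B:=S\tensor_{\crp{F}}S^{\mathrm{op}}\iso\operatorname{End}_{\crp{F}}(S)\iso\mat_m(\crp{F})$ (with $m=\dim_{\crp{F}}S$) because $S$ is central simple, view $A$ as a left $B$\nbd module via $(s\tensor t)\cdot a=sat$, observe that $S$ with this action is the simple $B$\nbd module and $\operatorname{End}_B(S)=\crp{F}$, and then combine the semisimple-module decomposition $A\iso S\tensor_{\crp{F}}\operatorname{Hom}_B(S,A)$ with the identification $\operatorname{Hom}_B(S,A)\iso C$, $f\mapsto f(1)$; unwinding the composite recovers $\mu$.
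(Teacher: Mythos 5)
Your main argument is correct and is essentially the paper's own proof: handle the split case via the full set of matrix units discussed before the lemma, then pass to a splitting field $\crp{E}$, use the base-change identity $\C_{A\tensor_{\crp{F}}\crp{E}}(S\tensor_{\crp{F}}\crp{E})=C\tensor_{\crp{F}}\crp{E}$, and descend along the faithfully flat extension $\crp{F}\subseteq\crp{E}$ (the paper leaves the centralizer identity and the descent step implicit, which you fill in, and your closing double-centralizer sketch via $S\tensor_{\crp{F}}S^{\mathrm{op}}$ is a valid alternative the paper does not take).
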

\begin{proof}
  Define an algebra homomorphism
  $\kappa\colon S\tensor_{\crp{F}}C \to A$ by
  $(s\tensor c)^{\kappa}= sc$.
  We have seen above that
  $\kappa$ is an isomorphism if
  $S\iso \mat_n(\crp{F})$.
  In the general case, there is a field $\crp{E}\geq \crp{F}$ such
  that $S\tensor_{\crp{F}}\crp{E}\iso \mat_n(\crp{E})$.
  Then
  $\C_{A\tensor_{\crp{F}}\crp{E}} (S\tensor_{\crp{F}}\crp{E})
   = C \tensor_{\crp{F}}\crp{E}$,
  and
  $(S\tensor_{\crp{F}}C) \tensor_{\crp{F}} \crp{E}
   \iso
   (S \tensor_{\crp{F}}\crp{E}) \tensor_{\crp{E}}
   (C\tensor_{\crp{F}}\crp{E})$.
   It follows that
   $ \kappa \tensor 1 \colon
      (S \tensor_{ \crp{F} } C) \tensor_{ \crp{F} } \crp{E}
      \to A \tensor_{ \crp{F} } \crp{E} $
   is an isomorphism,
   and thus $\kappa$ is also an isomorphism.
\end{proof}
\begin{notat}
  Let $R$ be a commutative ring and $A$ an $R$\nbd algebra.
  We denote by $\ZF(A, R)$ the set of central
 $R$\nbd forms on $A$, that is the set of $R$\nbd linear maps
 $\tau \colon A \to R$ with $\tau(ab)= \tau(ba)$ for all $a,b\in A$.
\end{notat}
Note that an $R$\nbd linear map $\tau\colon A \to R$ is a central
form if and only if $[A,A]= \{ \sum_i (a_i b_i - b_i
a_i)\}\subseteq \ker \tau$.
Using this, one easily proves:
\begin{lemma}\label{l:zfatensorb}
  Let $A$ and $B$ be $R$-algebras. Then
  \[ (\sigma,\tau)\mapsto \sigma\tensor\tau\in \ZF(A\tensor_{R}B,R),
     \quad (\sigma\tensor\tau)(a\tensor b)=\sigma(a)\tau(b)\]
  defines a canonical isomorphism
  \[\ZF(A,R)\tensor_{R}\ZF(B,R)\iso \ZF(A\tensor_{R}B,R).\]
\end{lemma}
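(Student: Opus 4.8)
The plan is to use the description of central forms recalled just before the
statement: an $R$\nbd linear map $\tau\colon A\to R$ lies in $\ZF(A,R)$
precisely when $[A,A]\subseteq\ker\tau$, so that $\ZF(A,R)$ is canonically
$\operatorname{Hom}_R(A/[A,A],R)$, and likewise for $B$ and for $A\tensor_R B$.
First I would check that $\sigma\tensor\tau$, the $R$\nbd linear map on
$A\tensor_R B$ with $(\sigma\tensor\tau)(a\tensor b)=\sigma(a)\tau(b)$, really
lies in $\ZF(A\tensor_R B,R)$; this is a direct check using that $\sigma$ and
$\tau$ are central. Since $(\sigma,\tau)\mapsto\sigma\tensor\tau$ is $R$\nbd
bilinear, it induces an $R$\nbd linear map
\[
  \Phi\colon \ZF(A,R)\tensor_R\ZF(B,R)\longrightarrow \ZF(A\tensor_R B,R),
\]
and the remaining task is to show that $\Phi$ is bijective.

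The key point is the identity of $R$\nbd submodules of $A\tensor_R B$,
\[
  [A\tensor_R B, A\tensor_R B] = [A,A]\tensor_R B + A\tensor_R[B,B].
\]
For ``$\subseteq$'' one expands a commutator of two elementary tensors via
$aa'\tensor bb'-a'a\tensor b'b=[a,a']\tensor bb'+a'a\tensor[b,b']$ and notes
that $[A\tensor_R B,A\tensor_R B]$ is spanned by such commutators; for
``$\supseteq$'' one writes $[a,a']\tensor b=(a\tensor b)(a'\tensor1)-(a'\tensor1)(a\tensor b)$
and, symmetrically, $a\tensor[b,b']=(a\tensor b)(1\tensor b')-(1\tensor b')(a\tensor b)$.
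Granting this, right exactness of $-\tensor_R-$ supplies a canonical
isomorphism $(A\tensor_R B)/[A\tensor_R B,A\tensor_R B]\iso(A/[A,A])\tensor_R(B/[B,B])$
(this being the standard description of $(A/I)\tensor_R(B/J)$).

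Applying $\operatorname{Hom}_R(-,R)$ to this isomorphism and combining with
the identifications from the first paragraph, one sees that $\Phi$ becomes the
canonical map $\operatorname{Hom}_R(M,R)\tensor_R\operatorname{Hom}_R(N,R)
\to\operatorname{Hom}_R(M\tensor_R N,R)$, where $M=A/[A,A]$ and $N=B/[B,B]$.
The one step that is not pure bookkeeping, and where I expect the only real
obstacle to lie, is checking that this last map is an isomorphism; this is the
standard fact that it is so as soon as $M$ (equivalently $N$) is a finitely
generated projective $R$\nbd module. That is the point at which a finiteness
hypothesis on $A$ and $B$ enters --- it is automatic when $R$ is a field, and
holds in all the situations where the lemma is applied --- and with it the
argument is complete, which accounts for the remark that the lemma is easily
proved.
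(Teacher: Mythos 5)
Your route is exactly the one the paper intends (it offers no written proof, only the preceding remark that an $R$\nbd linear form is central precisely when it kills $[A,A]$): identify $\ZF(A,R)$ with $\operatorname{Hom}_R(A/[A,A],R)$, establish the submodule identity $[A\tensor_R B,A\tensor_R B]=[A,A]\tensor_R B+A\tensor_R[B,B]$, and dualize; those steps are all correct as you carry them out.

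The one flawed point is your closing claim that the needed hypothesis ``is automatic when $R$ is a field.'' Over a field, projectivity is automatic but finite generation is not, and the canonical map $\operatorname{Hom}_R(M,R)\tensor_R\operatorname{Hom}_R(N,R)\to\operatorname{Hom}_R(M\tensor_R N,R)$ genuinely fails to be surjective when $M$ and $N$ are both infinite dimensional: taking $A=B=R[x]$ commutative over a field $R$, so $M=A$ and $N=B$, the form sending $x^i\tensor x^j$ to $\delta_{ij}$ is not a finite sum $\sum_k\sigma_k\tensor\tau_k$. So, read literally for arbitrary $R$\nbd algebras, the lemma does need a finiteness assumption, and your proof correctly isolates where it enters; you just cannot discharge it by appealing to $R$ being a field. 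In every use the paper makes of the lemma this is harmless: one factor is a central simple algebra $S$ over the field $\crp{F}$, hence finite dimensional with $\ZF(S,\crp{F})=\crp{F}\cdot\tr_{S/\crp{F}}$ free of rank one, so the relevant module is finitely generated projective; moreover, surjectivity in exactly that situation is verified directly in Lemma~\ref{l:cforms} by means of an element $s_0$ with $\tr_{S/\crp{F}}(s_0)=1$, which needs no finiteness assumption on the other factor at all.
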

We return to the situation where  $S$ is a central simple
algebra over the field $\crp{F}$.
We denote the
reduced trace
of the central simple
$\crp{F}$\nbd algebra $S$ by
$\tr_{S/\crp{F}}$ or simply $\tr$, if no confusion can arise.
Remember that the reduced trace is computed as follows:
first choose a
splitting field $\crp{E}$ of $S$ and an isomorphism
$\eps \colon S\tensor_{\crp{F}}\crp{E} \to \mat_n(\crp{E})$,
then let $\tr_{S/\crp{F}}(s)$ be the
ordinary matrix trace of $(s\tensor 1)^{\eps}$.
Then indeed $\tr_{S/\crp{F}}(s)$ is a well-defined  element of
$\crp{F}$~\citep[Section~9a]{reinerMO}.
The kernel of $\tr_{S/\crp{F}}$ is exactly the subspace
$[S,S]$.
Thus $\ZF(S, \crp{F})= \crp{F}\cdot \tr_{S/\crp{F}}\iso \crp{F}$.

Now suppose $S\leq A$ and $C= \C_A(S)$ as above.
Combining these remarks with Lemma~\ref{l:central} and
Lemma~\ref{l:zfatensorb}, we get that
\[ \ZF(C, \crp{F})
   \iso \ZF(S,\crp{F}) \tensor_{\crp{F}} \ZF(C,\crp{F})
   \iso \ZF(S\tensor_{\crp{F}} C, \crp{F})
   \iso \ZF(A, \crp{F}).\]
Any central form $\chi\in \ZF(A, \crp{F})$
can be written as $\tr_{S/\crp{F}}\tensor \tau$ for some
$\tau\in \ZF(C,\crp{F})$.
The next lemma describes $\tau$ in terms of
$\chi$.
\begin{lemma}\label{l:cforms}
  Pick $s_0\in S$ with $\tr_{S/\crp{F}}(s_0)=1$.
  Consider
  \[\ZF(A,\crp{F})\ni \chi \mapsto \chi^{\eps}\in \ZF(C,\crp{F}),
    \quad
   \chi^{\eps} (c) = \chi(s_0 c)
  \quad \text{for $c\in C$}.\]
  Then $\eps$ is an isomorphism, is independent of the choice of $s_0$ and
  \[ \chi(sc) = \tr_{S/\crp{F}}(s) \chi^{\eps}(c)
    \quad\text{for all $s\in S$ and $c\in C$}.
    \]
\end{lemma}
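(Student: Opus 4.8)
The plan is to reduce everything to the canonical decomposition of central forms on $A$ that was set up just before the statement, namely that every $\chi\in\ZF(A,\crp{F})$ can be written $\chi=\tr_{S/\crp{F}}\tensor\tau$ for a (unique) $\tau\in\ZF(C,\crp{F})$, equivalently $\chi(sc)=\tr_{S/\crp{F}}(s)\,\tau(c)$ for all $s\in S$, $c\in C$ (recall that by Lemma~\ref{l:central} the products $sc$ span $A$). First I would check that $\chi^{\eps}$ as defined actually lands in $\ZF(C,\crp{F})$. It is visibly $\crp{F}$\nbd linear, so it remains to verify $\chi^{\eps}(cc')=\chi^{\eps}(c'c)$ for $c,c'\in C$. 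Since $s_0\in S$ and $c,c'\in C=\C_A(S)$ commute with $s_0$, and since $\chi$ is a central form on $A$,
\[ \chi^{\eps}(cc') = \chi(s_0cc') = \chi\bigl((s_0c)c'\bigr) = \chi\bigl(c'(s_0c)\bigr) = \chi(s_0c'c) = \chi^{\eps}(c'c), \]
as needed.

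Now I would apply the decomposition. Writing $\chi=\tr_{S/\crp{F}}\tensor\tau$ and evaluating at $s=s_0$ gives $\chi^{\eps}(c)=\chi(s_0c)=\tr_{S/\crp{F}}(s_0)\,\tau(c)=\tau(c)$, so $\chi^{\eps}=\tau$. This yields all three assertions at once: $\chi^{\eps}$ does not depend on the choice of $s_0$ (the form $\tau$ does not); the identity $\chi(sc)=\tr_{S/\crp{F}}(s)\,\tau(c)=\tr_{S/\crp{F}}(s)\,\chi^{\eps}(c)$ is exactly the displayed formula; and the map $\chi\mapsto\chi^{\eps}=\tau$ is the inverse of the assignment $\tau\mapsto\tr_{S/\crp{F}}\tensor\tau$, which is precisely the isomorphism $\ZF(C,\crp{F})\iso\ZF(S,\crp{F})\tensor_{\crp{F}}\ZF(C,\crp{F})\iso\ZF(A,\crp{F})$ obtained from $\ZF(S,\crp{F})=\crp{F}\cdot\tr_{S/\crp{F}}\iso\crp{F}$ together with Lemmas~\ref{l:central} and~\ref{l:zfatensorb}. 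Hence $\eps$ is an isomorphism.

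There is no genuine obstacle here; the argument is essentially bookkeeping with the tensor decomposition already in place. The only points that deserve a word of care are the use of the commutativity of $S$ with $C$ in the central-form computation above, and the existence of $s_0$ with $\tr_{S/\crp{F}}(s_0)=1$, which holds because the reduced trace is surjective onto $\crp{F}$ (its kernel $[S,S]$ has codimension one in $S$, as recorded before the statement).
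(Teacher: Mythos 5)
Your proof is correct, but it takes a slightly different route than the paper's. You use as your engine the decomposition $\chi=\tr_{S/\crp{F}}\tensor\tau$ recorded in the paragraph before the lemma (which rests on Lemmas~\ref{l:central} and~\ref{l:zfatensorb} together with $\ZF(S,\crp{F})=\crp{F}\cdot\tr_{S/\crp{F}}$), and then simply evaluate at $s_0$ to identify $\chi^{\eps}=\tau$; the displayed identity, the independence of $s_0$, and the bijectivity of $\eps$ (as inverse of $\tau\mapsto\tr_{S/\crp{F}}\tensor\tau$) all fall out at once. The paper's proof instead establishes the identity $\chi(sc)=\tr_{S/\crp{F}}(s)\chi(s_0c)$ from scratch by a commutator computation: $[S,S]C=[SC,S]\subseteq[A,A]\subseteq\ker\chi$ and $s-\tr_{S/\crp{F}}(s)s_0\in[S,S]$ because $\dim_{\crp{F}}(S/[S,S])=1$; the identification $A\iso S\tensor_{\crp{F}}C$ is used only to check that $\widehat{\tau}(sc)=\tr_{S/\crp{F}}(s)\tau(c)$ is a well-defined central form, i.e., that $\eps$ has an inverse. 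The trade-off: your argument is shorter, pure bookkeeping, but leans on the full strength of the decomposition (in particular on the surjectivity of $\tau\mapsto\tr_{S/\crp{F}}\tensor\tau$ coming from Lemma~\ref{l:zfatensorb}), whereas the paper's commutator argument is more self-contained and in effect reproves that surjectivity concretely. Your preliminary check that $\chi^{\eps}$ is central on $C$ is fine but redundant once $\chi^{\eps}=\tau$ is known.
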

\begin{proof}
  We have $[S,S]C =[SC,S]\subseteq [A,A]\subseteq \ker \chi$ for
  any $\chi\in \ZF(A, \crp{F})$.
  Since $\dim_{\crp{F}}(S/[S,S])=1$,
  it follows that
  for any $s\in S$ we have
  $s -\tr_{S/\crp{F}}(s)s_0\in [S,S]$.
  Thus
  \[\chi(sc)
     = \chi \big( (s-\tr_{S/\crp{F}}(s)s_0) c + \tr_{S/\crp{F}}(s)s_0c
            \big)
     = \tr_{S/\crp{F}}(s)\chi(s_0 c).\]
  This shows the last claim.
  If $\tau\in \ZF(C,\crp{F})$, then
  $\widehat{\tau}(sc)= \tr_{S/\crp{F}}(s)\tau(c)$ defines a
  central form $\widehat{\tau}$ on $A$, since
  $A\iso S\tensor_{\crp{F}}C$.
  It is clear now that
  $\tau\mapsto \widehat{\tau}$ is the inverse of $\eps$.
  Since the definition of $\widehat{\tau}$ is independent of
  $s_0$, the map $\eps$ is independent of $s_0$, too.
  The proof is finished.
\end{proof}
\begin{lemma}\label{l:chars}
   Assume that $S\iso \mat_n(\crp{F})$.
   Then
   $\chi^{\eps}$ affords a $C$\nbd module
   if and only if $\chi$ affords an
  $A$\nbd module.
\end{lemma}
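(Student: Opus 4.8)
The plan is to exploit the hypothesis $S\iso\mat_n(\crp{F})$ to realise $A$ as a matrix algebra over $C$ and transport modules along the resulting Morita equivalence, keeping track of traces via Lemma~\ref{l:cforms}. Throughout, for a finite-dimensional $A$-module $M$ I write $\chi_M\in\ZF(A,\crp{F})$ for the character it affords, and similarly for $C$-modules; thus $\chi$ affords an $A$-module exactly when $\chi=\chi_M$ for some $M$.

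Fix a full set of matrix units $\{E_{ij}\}$ in $S$. Then $E_{11}$ is an idempotent of $A$, and since $A\iso S\tensor_{\crp{F}}C\iso\mat_n(C)$ by Lemma~\ref{l:central}, the map $c\mapsto E_{11}cE_{11}=cE_{11}$ is an $\crp{F}$-algebra isomorphism $C\to E_{11}AE_{11}$. Moreover $\tr_{S/\crp{F}}(E_{11})=1$, so by Lemma~\ref{l:cforms} we may compute $\chi^{\eps}(c)=\chi(E_{11}c)$ for every $c\in C$ and every $\chi\in\ZF(A,\crp{F})$.

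Suppose first that $\chi=\chi_M$. Regard $E_{11}M$ as a $C$-module via $c\cdot m=cm$, using the identification $C\iso E_{11}AE_{11}$. For $c\in C$ the operator $E_{11}c$ annihilates $(1-E_{11})M$ and acts as $c$ on $E_{11}M$, so the decomposition $M=E_{11}M\oplus(1-E_{11})M$ yields $\chi_M(E_{11}c)=\chi_{E_{11}M}(c)$; hence $\chi^{\eps}=\chi_{E_{11}M}$ and $\chi^{\eps}$ affords the $C$-module $E_{11}M$. Conversely, suppose $\chi^{\eps}=\chi_N$ for a $C$-module $N$. Let $U=\crp{F}^n$ be the simple $S$-module and set $M=U\tensor_{\crp{F}}N$, an $A\iso S\tensor_{\crp{F}}C$-module via $(s\tensor c)(u\tensor m)=su\tensor cm$. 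Then $E_{11}c$ acts on $M$ as $E_{11}|_U\tensor c|_N$, and since $E_{11}$ acts on $U$ as a rank-one idempotent (trace $1$), we get $\chi_M(E_{11}c)=\chi_N(c)=\chi^{\eps}(c)$, that is $(\chi_M)^{\eps}=\chi^{\eps}$. As $\eps\colon\ZF(A,\crp{F})\to\ZF(C,\crp{F})$ is an isomorphism by Lemma~\ref{l:cforms}, this forces $\chi_M=\chi$, so $\chi$ affords $M$.

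The only steps that are not purely formal are the two small pieces of bookkeeping with $E_{11}$: that $c\mapsto E_{11}cE_{11}$ identifies $C$ with $E_{11}AE_{11}$ (this is precisely where $A\iso\mat_n(C)$, hence the splitting hypothesis on $S$, is genuinely used — for a non-split central simple $S$ one gets $E_{11}AE_{11}\neq C$ and the statement can fail on Schur-index grounds), and the trace identity $\chi_M(E_{11}c)=\chi_{E_{11}M}(c)$. Everything else is dictated by Lemmas~\ref{l:central} and~\ref{l:cforms}.
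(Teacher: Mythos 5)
Your proof is correct and follows essentially the same route as the paper's: one direction truncates the given $A$-module by a trace-one idempotent of the split algebra $S$ (your $E_{11}$, the paper's $e$), and the other tensors the given $C$-module with the simple $S$-module. The only cosmetic difference is that you finish the converse by invoking the injectivity of $\eps$ from Lemma~\ref{l:cforms}, where the paper directly identifies $\chi$ with the character of $V\tensor_{\crp{F}}M$ via $\tr_{S/\crp{F}}=\tr_V$.
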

\begin{proof}
  Identify $A$ with $S\tensor C$.
  Let $V$ be a simple $S$\nbd module.
  Then $\tr_{S/\crp{F}}= \tr_V$.
   Thus if $\chi^{\eps}$ is the
  character of the
  $C$\nbd module $M$,
  then $\chi$ is the
  character of the
  $A$\nbd module $V\tensor_{\crp{F}}M$.
  Conversely, suppose $\chi$ is the character of the
  $S\tensor_{\crp{F}}C$\nbd module $N$.
   Since $S$ is split, there is an idempotent $e\in S$ of trace
  $1$.
  Then the character of $Ne$ as
  $C$\nbd module is obviously
  $\chi^{\eps}$.
\end{proof}
\begin{remark*}
  If $S$ is possibly not split and $V$ a simple
  $S$\nbd module,
  then $\tr_V = m\tr_{S/\crp{F}}$ for some
  $m\in \nats$ (the Schur index of $S$).
  Thus if $\chi^{\eps}$ affords the
  $C$\nbd module $M$, then
  $m\chi$ affords the
  $A$\nbd module $V\tensor M$.
\end{remark*}
\begin{remark}
  The proofs of the last lemmas are easier when
  $S\iso\mat_n(\crp{F})$.
  Moreover, in this case
   Lemmas~\ref{l:central},~\ref{l:cforms} and~\ref{l:chars}
  remain true for commutative rings $\crp{F}$, not just fields.
\end{remark}

Now let $R$ be a commutative ring and $A$ an $R$\nbd algebra.
An idempotent $i\in A$ is called a
\emph{full idempotent} of $A$ if $AiA=A$.
It is well known that $A$ and $iAi$ are
Morita equivalent when $i$
is full~\citep[18.30]{lamMR}.
The following is also well known.
\begin{lemma}\label{l:fullidem}
  Let $A$ be an ${R}$\nbd algebra and $i$ a full idempotent of $A$.
  Set $C=iAi$.
  Then restriction to $C$ defines an isomorphism
  from $ \ZF(A,{R})$ onto $\ZF(C,{R})$,
  and $\tau_{|C}$ is the character of a $C$-module if and only
  if $\tau$ is the character of an $A$-module.
\end{lemma}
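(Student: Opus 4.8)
The plan is to run everything off the fullness identity $1=\sum_{k=1}^{m}x_{k}iy_{k}$ (for suitable $x_{k},y_{k}\in A$) together with the cyclicity of central forms. The conceptual reason it all works is that $Ai$ is a progenerator implementing the Morita equivalence between $A$ and $C=iAi$, but I would prefer to write the relevant maps down explicitly.

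For the isomorphism statement: that $\tau_{|C}\in\ZF(C,R)$ whenever $\tau\in\ZF(A,R)$ is immediate. The crucial observation is the identity
\[
   \tau(a)=\sum_{k}\tau\big(iy_{k}ax_{k}i\big)\qquad(a\in A,\ \tau\in\ZF(A,R)),
\]
which comes out of $\tau(a)=\tau\big((\sum_{k}x_{k}iy_{k})a\big)$ by cycling the factor $x_{k}$ to the far right and then, using $i^{2}=i$, cycling a further copy of $i$ around the argument. Since $iy_{k}ax_{k}i\in iAi=C$, the right-hand side depends only on $\tau_{|C}$; this gives injectivity of restriction and simultaneously shows what the inverse map must be. For surjectivity I would take $\sigma\in\ZF(C,R)$, \emph{define} $\tau(a):=\sum_{k}\sigma(iy_{k}ax_{k}i)$, and verify that $\tau\in\ZF(A,R)$ and $\tau_{|C}=\sigma$. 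For $\tau_{|C}=\sigma$ one writes $iy_{k}cx_{k}i=(iy_{k}i)\,c\,(ix_{k}i)$ for $c\in C$, cycles the outer factors and uses $\sum_{k}(ix_{k}i)(iy_{k}i)=i$. Centrality of $\tau$ is the only computation with content: inserting a second copy of $1=\sum_{l}x_{l}iy_{l}$ between $a$ and $b$ turns each summand into $\sigma$ of $iy_{k}ax_{l}iy_{l}bx_{k}i$, which I would factor as the product $(iy_{k}ax_{l}i)(iy_{l}bx_{k}i)$ of \emph{two} elements of $C$ (the junction $ii$ collapses), swap the factors using cyclicity of $\sigma$, and then contract $\sum_{k}x_{k}iy_{k}=1$ to arrive at $\tau(ba)$.

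For the statement about characters I would argue in two halves, taking ``module'' to mean one that is finitely generated projective over $R$ (automatic over a field). If $\tau$ is the character of an $A$\nbd module $V$, then $iV$ is an $R$\nbd direct summand of $V$, hence again finitely generated projective over $R$, and it carries a $C$\nbd module structure; moreover every $c\in C=iAi$ annihilates $(1-i)V$ (because $c(1-i)=ci(1-i)=0$) and maps $iV$ into $iV$, so the $R$\nbd trace of $c$ on $V$ equals the $R$\nbd trace of $c$ on $iV$, i.e.\ $\tau_{|C}$ is the character of $iV$. Conversely, if $\tau_{|C}$ is the character of a $C$\nbd module $M$, set $W:=Ai\tensor_{C}M$; since $i$ is full, $Ai$ is finitely generated projective as a right $C$\nbd module, so $W$ is a summand of a finite direct sum of copies of $M$ and hence finitely generated projective over $R$, and $W$ is an $A$\nbd module. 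Let $\tau'\in\ZF(A,R)$ be its character. By the half just proved, $\tau'_{|C}$ is the character of $iW$; but $iW=iAi\tensor_{C}M=C\tensor_{C}M\iso M$ as $C$\nbd modules, so $\tau'_{|C}=\tau_{|C}$, whence $\tau'=\tau$ by the injectivity of restriction, and $\tau$ is the character of $W$.

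I expect the only genuinely delicate point to be the centrality computation in the surjectivity argument: one has to notice that $iy_{k}ax_{l}iy_{l}bx_{k}i$ must be split as $(iy_{k}ax_{l}i)(iy_{l}bx_{k}i)$, reusing the idempotent at the junction, so that the cyclicity of $\sigma$ — available only for pairs of elements of $C$ — can be brought to bear. The remaining verifications (well-definedness, the collapses $\sum_{k}(ix_{k}i)(iy_{k}i)=i$ and $\sum_{k}x_{k}iy_{k}=1$, and tracking $R$\nbd traces on direct summands) are routine.
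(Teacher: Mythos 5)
Your proposal is correct and follows essentially the same route as the paper: the inverse of restriction is the same explicit map $\sigma\mapsto\bigl(a\mapsto\sum_k\sigma(iy_k a x_k i)\bigr)$ built from the fullness relation $1=\sum_k x_k i y_k$ (you merely write out the "routine calculation" the paper omits), and the character statement uses the same modules $iV$ and $Ai\tensor_C M$ (the paper's $Mi$ and $N\tensor_C iA$ in right-module form). Your only variation is to deduce the converse character claim from injectivity of restriction instead of computing the trace of $N\tensor_C iA$ directly, which is a harmless shortcut.
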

\begin{proof}
  Suppose $1_A = \sum_{k=1}^r x_k i y_k$ with $x_k, y_k \in A$.
  If $\xi\in \ZF(C,{R})$, then
  $\widehat{\xi}$ defined by
  $\widehat{\xi}(b)= \sum_{k=1}^r \xi(iy_k b x_k i)$
  is a central form, as a routine calculation shows.
  The map $\xi\mapsto \widehat{\xi}$ is the inverse
  of restriction.

  It is clear that if $\chi$ is the character of the $A$\nbd module $M$,
  then $\chi_{|C}$ is the character of $Mi$ as $C$\nbd module.
  Conversely if $\chi_{|C}$ is the character of the $C$-module $N$,
  then $\chi$ is the character of the $A$-module
  $N\tensor_{C} iA$.
\end{proof}

\section{Magic representations}\label{sec:magic}
Throughout this section
we  assume the following situation:
 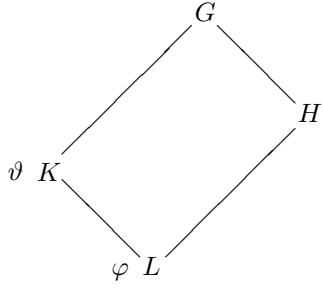
\begin{figure}[ht]
\setlength{\unitlength}{0.45ex}
\centering
\begin{picture}(70, 53)(-8,-1)
 \put(22.5,2.5){\line(1,1){25}}
\put(17.5,2.5){\line(-1,1){15}}
 \put(2.5,22.5){\line(1,1){25}}
\put(47.5,32.5){\line(-1,1){15}}
\put(18.0,-1){$L$}
\put(12,-1){$\phi$}
\put(-2.4,17.2){$K$}
\put(-8,17.4){$\theta$}
\put(28.1,48.4){$G$}
\put(48.0,28.7){$H$}
\end{picture}
\caption{The Basic Configuration}
\end{figure}
\begin{hyp}\label{h:bconf}
Suppose  $G= HK$ is a finite group where $K \nteq G$ and $H\leq G$,
 and set $L = H \cap K$.
 Let $\crp{F}$ be a field with algebraic closure
 $\alcl{\crp{F}}$.
 Let $\theta\in \Irr_{\alcl{\crp{F}}} K$ and
 $\phi\in \Irr_{\alcl{\crp{F}}} L $ be irreducible characters,
 such that
$\phi$ occurs in $\theta_L$ with  multiplicity $n>0$.
Assume that $\theta$ and $\phi$  are invariant in $H$
 and that     $\crp{F}(\phi)= \crp{F}(\theta)= \crp{F}$.
If $\crp{F}$ is a field of characteristic $p>0$,
assume that $\theta$ and $\phi$ belong to
$p$\nbd{}blocks of defect zero.
\end{hyp}
The assumption for charateristic $p$ assures that there
are central primitive idempotents
$e_{\phi}$ (respective $e_{\theta}$)
in $\crp{F}L$ (respective $\crp{F}K$) such that
$\phi(\crp{F}Le_{\phi})\neq 0$
(respective $\theta(\crp{F}Ke_{\theta})\neq 0$) and such that
$\crp{F}Le_{\phi}$ and $\crp{F}Ke_{\theta}$ are central simple
algebras over $\crp{F}$.
In characteristic zero, this is true anyway.
We set $i=e_{\phi}e_{\theta}$.
Observe that $i^h=i$ for all $h\in H$ and that
$i^2 =e_{\phi}e_{\theta}e_{\phi}e_{\theta}= e_{\phi}^2e_{\theta}^2=i$
since $e_{\theta}\in \Z(\crp{F}K)$.
We write $S=(i\crp{F}K i)^L =\C_{i\crp{F}K i}(L)$.
We fix this notation for the rest of this section.
\begin{lemma}\label{l:ctcentr}
   $S$ is a central simple $\crp{F}$\nbd algebra,
   its dimension over $\crp{F}$ is $n^2$, and
  $\C_{i\crp{F} K i} ( S )
   = \crp{F} L i \iso \crp{F} L e_{\phi}$.
\end{lemma}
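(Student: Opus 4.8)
The plan is to realize $i\crp{F}Ki$ as a corner of the central simple algebra $A:=\crp{F}Ke_{\theta}$ and to extract all three assertions from the double centralizer theorem applied to the image of $\crp{F}L$ inside that corner. Here are the elementary facts I would set up first. By Hypothesis~\ref{h:bconf} (in characteristic $p$ this is exactly where the defect-zero assumption is used) $A=\crp{F}Ke_{\theta}$ is a central simple $\crp{F}$-algebra with $\dim_{\crp{F}}A=\theta(1)^{2}$, and likewise $\crp{F}Le_{\phi}$ is central simple over $\crp{F}$ with $\dim_{\crp{F}}\crp{F}Le_{\phi}=\phi(1)^{2}$. Since $e_{\theta}\in\Z(\crp{F}K)$ acts as the identity on $i\crp{F}Ki$, we have $i\crp{F}Ki=iAi$; and $n>0$ forces $i=e_{\phi}e_{\theta}\neq0$, because $e_{\phi}$ cuts out the nonzero $\phi$-isotypic part of the restriction to $L$ of the simple $A$-module. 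Hence the corner $iAi$ is again central simple over $\crp{F}$. Finally, composing $\crp{F}L\to A$, $x\mapsto xe_{\theta}$, with $a\mapsto iai$ gives an algebra map $\crp{F}L\to iAi$; since $e_{\phi}$ is central in $\crp{F}L$ and $i=e_{\phi}e_{\theta}$, this map is simply $x\mapsto xi$, it has image $\crp{F}Li$, and it factors through the quotient $\crp{F}Le_{\phi}$. The induced map $\crp{F}Le_{\phi}\to iAi$ is unital (it sends $e_{\phi}$ to $i=1_{iAi}$), hence injective since $\crp{F}Le_{\phi}$ is simple; so $\crp{F}Li$ is a central simple subalgebra of $iAi$ containing its identity element, and $\crp{F}Li\iso\crp{F}Le_{\phi}$.

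Next I would identify $S$. For $s\in iAi\subseteq A$ one has $s^{h}=h^{-1}sh$, and $s^{h}=s$ exactly when $s$ commutes with the image $he_{\theta}$ of $h$ in $A$; hence $S=\C_{iAi}(\crp{F}Le_{\theta})$. As $i$ is central in $\crp{F}Le_{\theta}$ with $\crp{F}Le_{\theta}\cdot i=\crp{F}Li$, a short computation gives $\C_{iAi}(\crp{F}Le_{\theta})=\C_{iAi}(\crp{F}Li)$, so $S=\C_{iAi}(\crp{F}Li)$. The double centralizer theorem for central simple algebras~\citep[Section~7]{reinerMO} now yields at once that $S$ is central simple over $\crp{F}$, that $\C_{iAi}(S)=\C_{iAi}\bigl(\C_{iAi}(\crp{F}Li)\bigr)=\crp{F}Li\iso\crp{F}Le_{\phi}$, and that
\[
\dim_{\crp{F}}(iAi)=\dim_{\crp{F}}(S)\cdot\dim_{\crp{F}}(\crp{F}Le_{\phi})=\phi(1)^{2}\dim_{\crp{F}}(S).
\]
(Alternatively, Lemma~\ref{l:central} applied to the inclusions $\crp{F}Li\subseteq iAi$ and $S\subseteq iAi$ does the same work.)

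It therefore remains to verify $\dim_{\crp{F}}(iAi)=n^{2}\phi(1)^{2}$, which then gives $\dim_{\crp{F}}S=n^{2}$. Let $U$ be the simple $A$-module, $D=\operatorname{End}_{A}(U)$, and $m$ the Schur index of $\theta$ over $\crp{F}$, so $\dim_{\crp{F}}D=m^{2}$ and, since $\crp{F}(\theta)=\crp{F}$, $U$ affords the character $m\theta$ and $\dim_{\crp{F}}U=m\theta(1)$. Writing $A\iso\operatorname{End}_{D}(U)$, the corner satisfies $iAi\iso\operatorname{End}_{D}(iU)$, and since $U$ and its direct summand $iU$ are free over the division ring $D$ one gets $\dim_{\crp{F}}(iAi)=\dim_{\crp{F}}(A)\bigl(\dim_{\crp{F}}(iU)/\dim_{\crp{F}}(U)\bigr)^{2}$. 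Now $iU=e_{\phi}U$ affords the $e_{\phi}$-part of the character $m\theta_{L}$, which is $mn\phi$ because $\crp{F}(\phi)=\crp{F}$ and $(\theta_{L},\phi)=n$; hence $\dim_{\crp{F}}(iU)=mn\phi(1)$ and
\[
\dim_{\crp{F}}(iAi)=\theta(1)^{2}\Bigl(\frac{mn\phi(1)}{m\theta(1)}\Bigr)^{2}=n^{2}\phi(1)^{2},
\]
the Schur index cancelling. (When $\operatorname{char}\crp{F}=p$ one reads $\theta$, $\phi$ and all the characters above as Brauer characters; the computation is unchanged, the defect-zero hypothesis guaranteeing that $\crp{F}Ke_{\theta}$ and $\crp{F}Le_{\phi}$ really are central simple.)

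The substantive input is the double centralizer theorem; the only points that need care are the identification $S=\C_{iAi}(\crp{F}Li)$ with a central simple subalgebra sharing the identity element of $iAi$, and the dimension count, where one must let the Schur indices cancel and, in positive characteristic, pass to Brauer characters.
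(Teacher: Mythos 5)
Your proof is correct and takes essentially the same route as the paper: identify $S$ as the centralizer of $\crp{F}Li\iso\crp{F}Le_{\phi}$ (embedded via $x\mapsto xi$, injective by simplicity) in the central simple corner $i\crp{F}Ki$, apply the centralizer/double-centralizer theorem, and reduce everything to the dimension count $\dim_{\crp{F}}(i\crp{F}Ki)=n^{2}\phi(1)^{2}$. The only difference is cosmetic: the paper gets this count by extending scalars and identifying $i\alcl{\crp{F}}Ki\iso\enmo_{\alcl{\crp{F}}}(Vi)\iso\mat_{n\phi(1)}(\alcl{\crp{F}})$, whereas you stay over $\crp{F}$, work with the simple $\crp{F}Ke_{\theta}$-module and let the Schur index cancel.
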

\begin{proof}
 Let  $V$ be a $\alcl{\crp{F}} K$\nbd module affording $\theta$.
 Then
 $Vi = Ve_{\phi}$ is the $\phi$\nbd part of $V_{\alcl{\crp{F}} L}$, and thus
 $Vi \iso nU$ for some simple $\alcl{\crp{F}} L$\nbd module affording $\phi$.
 (In particular, $i\neq 0$.)
 The isomorphisms
 $\alcl{\crp{F}}K e_{\theta}
   \iso \enmo_{\alcl{\crp{F}}} V
   \iso \mat_{\theta(1)}(\alcl{\crp{F}})$
 restrict to isomorphisms
 $i\alcl{\crp{F}} K i \iso \enmo_{ \alcl{\crp{F}} }(Vi)
   \iso
 \mat_{n\phi(1)}(\alcl{\crp{F}})$.
 In particular,
 $\dim_{\crp{F}}(i\crp{F}K i) = n^2\phi(1)^2$.

 Since  $i\neq 0$,
 the ring homomorphism
 $\alpha \mapsto \alpha i = \alpha e_{\theta}$
 from $\crp{F} L e_{\phi}$ to $\crp{F} L i$ is not zero.
 As $\crp{F} L e_{\phi}$ is simple,
 the map $\alpha \mapsto \alpha i$ is injective. Thus
 $\crp{F} L e_{\phi} \iso \crp{F} L i$.

 The algebras $\crp{F} K e_{\theta}$ and $i\crp{F} K i$
 are central simple.
 By definition,
$S$ is just the centralizer of $\crp{F} L i$ in $i\crp{F} K i$. By the
 Centralizer Theorem~\citep[Theorem~3.15]{farbNA}, 
 $S$ is central simple, too, and
 the centralizer of $S$ is again $\crp{F} L i$. Also
 $i \crp{F} K i \iso S\tensor_{\crp{F}} \crp{F} L i$.
 From this  and from
 $\dim_{\crp{F}}(i\crp{F} K i) = n^2 \phi(1)^2$
 it follows that $\dim_{\crp{F}}S=n^2$ as claimed.
\end{proof}
\begin{cor}\label{c:ifgicentral}
  $i\crp{F}G i \iso S\tensor_{\crp{F}}C$, where
  $C= \C_{i\crp{F}G i}(S)$.
\end{cor}
\begin{proof}
  Clear by Lemma~\ref{l:central}.
\end{proof}
\begin{lemma}\label{l:fii}
  $\crp{F}Ke_{\theta}= \crp{F}K i \crp{F}K$ and
  $ \crp{F}G e_{\theta}= \crp{F}G i \crp{F}G$.
\end{lemma}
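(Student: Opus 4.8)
The plan is to prove the two identities in turn; the first carries the content, and the second follows formally from it once one knows that $e_{\theta}$ is central in $\crp{F}G$.

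For the first identity, note that $e_{\phi}\in\crp{F}L\subseteq\crp{F}K$ and $e_{\theta}\in\Z(\crp{F}K)$, so the idempotent $i=e_{\phi}e_{\theta}$ lies in the two-sided ideal $\crp{F}Ke_{\theta}$ of $\crp{F}K$; hence $\crp{F}Ki\crp{F}K\subseteq\crp{F}Ke_{\theta}$. For the reverse inclusion I would use that, by Hypothesis~\ref{h:bconf} and the remarks following it, $\crp{F}Ke_{\theta}$ is central simple over $\crp{F}$, in particular a simple ring. Now $\crp{F}Ki\crp{F}K$ is a two-sided ideal of $\crp{F}Ke_{\theta}$ (it is closed under left and right multiplication by $\crp{F}K$, a fortiori by $\crp{F}Ke_{\theta}$), and it is nonzero, since $i=1\cdot i\cdot 1\in\crp{F}Ki\crp{F}K$ and $i\neq 0$ as observed in the proof of Lemma~\ref{l:ctcentr}. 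By simplicity, $\crp{F}Ki\crp{F}K=\crp{F}Ke_{\theta}$.

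For the second identity I would first check that $e_{\theta}\in\Z(\crp{F}G)$. The character $\theta$ is invariant in $H$ by hypothesis and is automatically invariant under conjugation by $K$, hence invariant in $HK=G$; thus $g^{-1}e_{\theta}g=e_{\theta}$ for every $g\in G$, and since $e_{\theta}$ already commutes with $\crp{F}K$, it commutes with all of $\crp{F}G$. Therefore $\crp{F}Ge_{\theta}$ is a two-sided ideal of $\crp{F}G$ containing $i$, so $\crp{F}Gi\crp{F}G\subseteq\crp{F}Ge_{\theta}$. Conversely, the first identity gives $e_{\theta}\in\crp{F}Ki\crp{F}K\subseteq\crp{F}Gi\crp{F}G$, and since the right-hand side is a left ideal of $\crp{F}G$ it contains $\crp{F}Ge_{\theta}$; hence $\crp{F}Ge_{\theta}=\crp{F}Gi\crp{F}G$.

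I do not expect a genuine obstacle here, but the point deserving attention is that one cannot run the first argument directly with $G$ in place of $K$: the algebra $\crp{F}Ge_{\theta}$ is in general not simple --- it decomposes into the blocks of $\crp{F}G$ lying over $\theta$ --- so the reverse inclusion for $G$ has to be obtained by descending to $K$, where simplicity is available, and by invoking the centrality of $e_{\theta}$ in $\crp{F}G$, which in turn rests on the hypothesis $G=HK$.
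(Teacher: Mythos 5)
Your proof is correct and follows the same route as the paper: the first identity comes from the simplicity of $\crp{F}Ke_{\theta}$ together with $i\neq 0$, and the second is deduced from the first using that $e_{\theta}$ is central in $\crp{F}G$. You have merely written out in detail what the paper compresses into one sentence.
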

\begin{proof}
  The first assertion follows since $\crp{F}Ke_{\theta}$ is a
  simple ring and $i\neq 0$, and the second follows from the first.
\end{proof}
By the last two results and the results from
Section~\ref{sec:cssa}, the characters of
$\crp{F}Ke_{\theta}$ are in bijection with the characters of $C$.
We now work to find isomorphisms between $C$ and
$\crp{F}He_{\phi}$. Such isomorphisms exist under additional
conditions.

\begin{lemma}\label{l:projrep}
  Let $T$ be the inertia group of $\phi$ in $\N_G(L)$.
  Then $T/L$ acts on $S=(i \crp{F} K i)^L$ (by conjugation).
  There is  a projective representation
  $\sigma \colon T/L \to S$
  such that $s^g = s^{\sigma(Lg)}$ for all $s\in S$ and
  $g\in T$.
\end{lemma}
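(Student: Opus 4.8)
The plan is to check first that $T/L$ acts on $S$, and then to produce the projective representation $\sigma$ by combining an inner-automorphism argument on the central simple algebra $S$ with an argument showing $S$ is large enough to realize the relevant automorphisms as conjugations by units of $S$. For the action: an element $g\in T$ normalizes $L$, hence conjugation by $g$ is an automorphism of $\crp{F}K$; moreover $g$ fixes $e_\theta$ (since $e_\theta\in\Z(\crp{F}K)$ is the unique central primitive idempotent with $\theta(\crp{F}Ke_\theta)\neq 0$, and $g$ normalizes $K$ and fixes $\theta$'s block), and $g$ fixes $e_\phi$ because $g\in T$ stabilizes $\phi$ and hence its block idempotent in $\crp{F}L$. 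Therefore $g$ fixes $i=e_\phi e_\theta$ and induces an automorphism of $i\crp{F}Ki$; since $g$ normalizes $L$, this automorphism preserves $S=(i\crp{F}Ki)^L=\C_{i\crp{F}Ki}(L)$. Because $L$ acts trivially on $S$ (by definition $S$ centralizes $\crp{F}Li$, which contains the image of $L$), the action factors through $T/L$.

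Next I would exploit that $S$ is a central simple $\crp{F}$-algebra by Lemma~\ref{l:ctcentr}. Every $\crp{F}$-algebra automorphism of a central simple $\crp{F}$-algebra is inner, by the Skolem--Noether theorem; so for each $g\in T$ there is a unit $u_g\in S^{*}$ with $s^g = u_g^{-1} s u_g = s^{u_g}$ for all $s\in S$. The unit $u_g$ is determined up to a scalar in $\crp{F}^{*}=\Z(S)^{*}$, because two units inducing the same inner automorphism differ by a central unit. Choosing one such $u_g$ for each coset $Lg\in T/L$ — and $u_L=1$ — I set $\sigma(Lg)=u_g$. To see this is well defined on cosets: if $g'=\ell g$ with $\ell\in L$, then conjugation by $g'$ and by $g$ agree on $S$ (since $L$ acts trivially on $S$), so $u_{g'}$ and $u_g$ induce the same automorphism and the choice is consistent up to the scalar ambiguity already present.

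Finally, that $\sigma$ is a projective representation: for $g,h\in T$, both $\sigma(Lg)\sigma(Lh)$ and $\sigma(Lgh)$ induce the automorphism $s\mapsto s^{gh}$ of $S$, hence they differ by a scalar $\mu(Lg,Lh)\in\crp{F}^{*}$, giving the cocycle relation $\sigma(Lg)\sigma(Lh)=\mu(Lg,Lh)\,\sigma(Lgh)$; associativity of multiplication in $S$ forces $\mu$ to be a $2$-cocycle, which is exactly the statement that $\sigma\colon T/L\to S$ is a projective representation. The relation $s^g=s^{\sigma(Lg)}$ holds by construction. I expect the only genuinely substantive point is the verification that $g\in T$ fixes both idempotents $e_\theta$ and $e_\phi$, and hence $i$, so that conjugation by $g$ really is an automorphism of $i\crp{F}Ki$ stabilizing $S$; once that is in place the rest is a direct application of Skolem--Noether plus bookkeeping of scalars. (One should also note $S^{*}\subseteq (i\crp{F}Ki)$, so that writing $\sigma\colon T/L\to S$ and speaking of units of $S$ makes sense; this is immediate since $i=1_S$ is the identity of $i\crp{F}Ki$ restricted to $S$.)
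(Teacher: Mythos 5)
Your proposal is correct and follows essentially the same route as the paper: show $T$ fixes $i=e_\phi e_\theta$ and normalizes $L$ so that $T/L$ acts on $S$ (with $L$ acting trivially), then invoke Lemma~\ref{l:ctcentr} and the Skolem--Noether theorem to realize each automorphism as conjugation by a unit of $S$, determined up to a scalar in $\Z(S)^*=\crp{F}^*i$, which yields the projective representation. The only cosmetic remark is that your justification that $g$ fixes $e_\theta$ (``$g$ fixes $\theta$'s block'') should be traced back to the hypothesis that $\theta$ is invariant in $H$, hence in $G=HK$, which is exactly the fact the paper cites.
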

Observe that by Hypothesis~\ref{h:bconf}, we have $H\leq T$.
\begin{proof}
  As $\theta$ is $G$\nbd invariant, $T$ acts on $S$ and
  clearly $L$ acts trivially on $S$.
  Since $S$ is a central simple
  $\crp{F}$\nbd algebra, all automorphisms are inner by the
  Skolem-Noether Theorem
  \citep[Theorem~3.14]{farbNA}. 
  We can thus  choose $\sigma(Lg)\in S$ for every $g\in T$
  such that $s^g= s^{\sigma(Lg)}$ for all $s\in S $.
  This  determines $\sigma(Lg)$  up to
  multiplication with an element of $\Z(S)=\crp{F}i$.
  In this way we get a
  projective representation $\sigma$ from $T/L$ to $S$ with
  the desired property.
\end{proof}
We digress to prove a fact mentioned in the introduction.
We write $[\theta]_{ (G/K, \crp{F}) }$ for the cohomology class
in $H^2(G/K, \crp{F}^*)$ determined by $\theta$.
We write $[\theta]_{ (H/L, \crp{F}) }$ for its image in
$H^2(H/L, \crp{F}^*)$ under the isomorphism induced by the natural
isomorphism $G/K \iso H/L$.
\begin{remark}\label{r:cohomcl}
  Choose $\sigma$ as in Lemma~\ref{l:projrep}.
  Let $\alpha$ be the cocycle of $H/L$ defined by
  \[ \sigma(x)\sigma(y) = \alpha(x,y) \sigma(xy),
  \]
  and let $[\alpha]\in H^2(H/L, \crp{F}^*)$ be its cohomology class.
  Then
  \[  [\theta]_{ (H/L,\crp{F}) } = [\alpha] [\phi]_{ ( H/L,\crp{F}) }.
  \]
\end{remark}
\begin{proof}
  Let us first recall the definition of $[\theta]_{ (G/K,\crp{F}) }$ and
  $[\phi]_{ (H/L,\crp{F}) }$:
  For every $g\in G$ there exists $u_g\in \crp{F}Ke_{\theta}$ such
  that $a^g = a^{u_g}$ for all $a\in \crp{F}Ke_{\theta}$.
  This can be done such that $u_{gk}= u_g k$ and
  $u_{kg}= ku_g$ for all $k\in K$ and $g\in G$.
  Define $f\in Z^2(G, \crp{F}^*)$ by
  $u_x u_y = f(x,y)u_{xy}$.
  Then $f$ is constant on cosets of $K$ and thus may be viewed as
  an element of $Z^2(G/K, \crp{F}^*)$.
  Its cohomology class is, by definition, $[\theta]_{ (G/K,\crp{F}) }$.
  (A more usual approach
   is probably this~\citep[Theorem~11.2]{isaCTdov}: Suppose $\theta$ is afforded
   by a representation
   $D\colon K\to \mat_{\theta(1)}(\crp{F})$.
   (This may not be the case!)
   Extended $D$ to a projective representation of $G$.
   The corresponding factor set $f$ defines $[\theta]_{ (G/K,\crp{F})}$.
   Note that if $u_g$ as above are defined, we may extend
   $D$ by setting $\widehat{D}(g) = D(u_g)$.)

  Similarly, choose $v_h\in \crp{F}Le_{\phi}$ such that
  $b^h= b^{v_h}$ for all $b\in \crp{F}Le_{\phi}$.

  Since $i\crp{F}K i \iso S\tensor_{\crp{F}} \crp{F}Le_{\phi}$, we
  get that for
  $s\in S$ and $b\in \crp{F}Le_{\phi}$, we have
  \[ (sb)^{\sigma(h)v_h} = s^{\sigma(h)}b^{v_h}
       = s^h b^h = (sb)^h = (sb)^{u_h}.\]
  Since $i\crp{F}Ki$ is central simple, it follows
  that $iu_h = \lambda_h \sigma(h)v_h$ with $\lambda_h\in \crp{F}$.
  The proof follows.
\end{proof}
\begin{defi}\label{d:magic}
  In the situation of Hypothesis~\ref{h:bconf}, we say that
  \[\sigma \colon H/L \to S = (i\crp{F}Ki)^L
  \]
  is a magic representation
  (for $G, H, K, L, \theta, \phi$), if
  \begin{enums}
  \item $\sigma(Lh_1h_2)=\sigma(Lh_1)\sigma(Lh_2)$ for all $h_1$,
  $h_2\in H$ and
  \item $s^h = s^{\sigma(Lh)}$ for all $s\in S$ and $h\in H$.
  \end{enums}
  The character of a magic representation, that is the function
  $\psi \colon H/L \to \crp{F}$ defined by $\psi(Lh)= \tr_{S/\crp{F}}(\sigma(Lh))$, is called
  a magic character.
\end{defi}
\begin{thm}\label{t:C-iso}
  Assume Hypothesis~\ref{h:bconf} and let
  $\sigma \colon H/L \to S$ be a magic representation.
   Then the linear map
  \[ \kappa\colon \crp{F}H  \to C=\C_{i\crp{F}Gi}(S),\quad
     \text{defined by} \quad
     h \mapsto h \sigma(Lh)^{-1} \text{ for } h\in H ,\]
  is an algebra-homomorphism and induces an isomorphism
  $ \crp{F}H e_{\phi} \iso C$.
\end{thm}
\begin{proof}
  For $h\in H$ let $c_h = h \sigma(Lh)^{-1}= \sigma(Lh)^{-1}h$.
  (The inverse $\sigma(Lh)^{-1}$ is the inverse in $S$,
   so $\sigma(Lh)\sigma(Lh)^{-1}= i$.) Clearly $c_h\in C$.
  Note that
  \begin{align*}
     c_{g}c_{h}
      &= g \sigma(Lg)^{-1}h \sigma(Lh)^{-1}
         = gh (\sigma(Lg)^{-1})^{\sigma(Lh)}\sigma(Lh)^{-1}\\
      &= gh \sigma(Lh)^{-1}\sigma(Lg)^{-1}
         = gh \sigma(Lgh)^{-1} = c_{gh}.
  \end{align*}
  Thus extending the map $h \mapsto c_h$ linearly to $\crp{F}H$ defines an
  algebra homomorphism
  $ \kappa\colon \crp{F}H \to C$.
  For $l\in L $ we have
  $l \mapsto l \sigma(L)^{-1} = l i= l\cdot 1_C$.
  Thus $\kappa$
  restricted to $\crp{F}L$ is just multiplication with $i$,
  so that ${e}_{\phi}{\kappa}= e_{\phi}i= i= 1_C$,
  and any other central idempotent of
  $\crp{F}L$ maps to zero.
  Therefore
  \[(\crp{F}L){\kappa} = (\crp{F}Le_{\phi})\kappa = \crp{F} L i.
  \]

  For any $h\in H$ we have
  $(\crp{F}L e_{\phi} h){\kappa} = \crp{F} L i c_h$.
  Let $T$ be a transversal for the cosets of $L$ in $H$.
  As $\crp{F}He_{\phi}
      = \bigoplus_{t\in T} \crp{F} Le_{\phi} t$,
  the proof will be finished if we show that
  $ C = \bigoplus_{t\in T} \crp{F}L i c_t$.
  But this follows from standard arguments from the theory of
  group graded algebras:
  The decomposition $\crp{F} G = \bigoplus_{t\in T} \crp{F} Kt$
  yields the decomposition
  $i\crp{F} G i = \bigoplus_{t \in T} i\crp{F} Kt i $.
  For $Kg\in G/K$ set
  $C_{Kg}= C\cap \crp{F} Kg = \C_{i\crp{F} K g i}(S)$.
  Since $S \subseteq  i\crp{F} K i$, the centralizer $C$ of $S$
  inherits the grading of $i\crp{F}Gi$:
  \[C=\bigoplus_{Kg\in G/K} C_{Kg} = \bigoplus_{t\in T} C_{Kt}.\]
  As
  $c_t \in C \cap  i\crp{F} K i t = C\cap i \crp{F} K t i = C_{Kt}$ and
  $c_t$ is a unit of $C$,
  we conclude that
  \[ C_{Kt} = C_{Kt} c_t^{-1}c_t  \subseteq   C_Kc_t \subseteq C_{Kt},\]
  so equality holds throughout.
  As $C_{K} = \C_{i\crp{F}K i}(S) = \crp{F}Li$  by Lemma~\ref{l:ctcentr},
  the proof follows.
\end{proof}
\begin{remark*}
  If the projective representation of Lemma~\ref{l:projrep} is not
  equivalent to an ordinary representation, we still get some
  result. Let $\alpha$ be a factor set associated with $\sigma$,
  that is, $\sigma(x)\sigma(y)=\alpha(x,y)\sigma(xy)$ for $x,y\in
  H/L$.
  We may view $\alpha$ as
  a factor set of $H$ which is constant on cosets of $L$.
  Let $\crp{F}^{\alpha^{-1}}[H]$ denote the twisted group algebra
  with respect to $\alpha^{-1}$, that is,
  $\crp{F}^{\alpha^{-1}}[H]$ has a basis $\{b_h\mid h\in H\}$
  where $b_h b_g = \alpha(h,g)^{-1}b_{hg}$.
  The
  ordinary group algebra $\crp{F}L$ can be embedded in the
  twisted group algebra $\crp{F}^{\alpha^{-1}}[H]$ in an obvious way,
  and thus it is meaningful to view $e_{\phi}$ as an idempotent in
  $\crp{F}^{\alpha^{-1}}[H]$.
  With this notation, nearly the same proof as above shows that
  $C\iso \crp{F}^{\alpha^{-1}}[H]e_{\phi}$.
\end{remark*}
The following is an immediate consequence:
\begin{cor}\label{c:maincor1}
  Assume Hypothesis~\ref{h:bconf} and that there is a
  magic
  representation for this configuration.
  Then $i\crp{F}Gi \iso S\tensor_{\crp{F}} \crp{F}He_{\phi}$ and if $S\iso\mat_n(\crp{F})$,
  then $\crp{F}Ge_{\theta}$ and $\crp{F}He_{\phi}$ are Morita equivalent.
\end{cor}
\begin{proof}
  The first assertion follows by combining Theorem~\ref{t:C-iso}
  with Corollary~\ref{c:ifgicentral}. If $S\iso \mat_n(\crp{F})$, then
  $\crp{F}He_{\phi}$ and $i\crp{F}Gi\iso \mat_n(\crp{F}He_{\phi})$ are
  Morita-equivalent, and $\crp{F}Ge_{\theta}$ and $i\crp{F}Gi$ are
  Morita-equivalent,
  since $i$ is a full idempotent in $\crp{F}Ge_{\theta}$
  by Lemma~\ref{l:fii}.
\end{proof}

\section{Character correspondences}\label{sec:corr}
If $\crp{F}\leq \compl$, then every magic representation yields a
character correspondence between
$\Irr(G\mid \theta)$ and $\Irr(H\mid \phi)$.
We need some additional general notation to state the properties of this
correspondence.
\begin{notat}\label{not:brauerclass}\hfill
  \begin{enums}
  \item For any central simple $\crp{F}$-algebra $S$, let
        $[S]$ denote the algebra equivalence class of
        $S$ in the Brauer group of $\crp{F}$.
  \item For $\chi\in \Irr G$ and $\crp{F}\leq \compl$, set
        $[\chi]_{\crp{F}} = [\crp{F}(\chi)Ge_{\chi}]$.
        This is an element of the Brauer group of
        $\crp{F}(\chi)$.
  \end{enums}
\end{notat}
\begin{notat}
  For $\gamma \colon G \to \compl$ a class function and
  $\theta$ an irreducible character of some subgroup, let
$\gamma_{\theta}$ be the class function defined
 by
 \[ \gamma_{\theta}
     = \sum_{\chi \in \Irr(G\mid \theta)}
            (\gamma, \chi)_{G} \chi.
 \]
 This is the part of $\gamma$ that lies above $\theta$.
 We will need the case where $\theta$ is an irreducible character
 of some normal subgroup
 and  invariant in $G$.
 Note that then
 $\gamma_{\theta}(g)=\gamma(ge_{\theta})$.
\end{notat}
\begin{thm}\label{t:corr}
  Assume Hypothesis~\ref{h:bconf} with $\crp{F}\leq \compl$.
  Let
  $\sigma \colon H/L \to S^*$
  be a magic representation.
  Pick $s_0\in S$ with
  $\tr_{S/\crp{F}}(s_0)=1$.
  Let $U\leq G$ with $K\leq U$.
  For $\chi \in \compl[\Irr(U\mid \theta)]$, define
  \begin{equation}\label{equ:iota}
    \chi^{\iota}(h)= \chi(s_0 \sigma(Lh)^{-1}h).
  \end{equation}
  Then
  $\iota= \iota(\sigma)\colon
  \compl[\Irr(U\mid \theta)]
  \to \compl[\Irr(U\cap H\mid \phi)]$
  is an isomorphism independent of the choice of $s_0$.
  Let $\psi$ be the character of $\sigma$.
  The correspondence $\iota$ has
  the following properties:
  \begin{enums}
  \item \label{i:c_irr}
        $\chi \in \Irr(U\mid \theta)$ if and only if
        $\chi^{\iota} \in \Irr(U\cap H\mid \phi)$.
  \item \label{i:c_isometr}$(\chi_1^{\iota}, \chi_2^{\iota})_{U\cap H}
         = (\chi_1, \chi_2)_U$ for
         $\chi_1$, $\chi_2\in \compl[\Irr(U\mid \theta)]$.
  \item \label{i:c_prop}$\chi(1)/\theta(1) = \chi^{\iota}(1)/ \phi(1)$.
  \item \label{i:c_resuv}
        $(\chi_{U_1})^{\iota} = (\chi^{\iota})_{U_1\cap H}$
        for $K\leq U_1\leq U_2\leq G$ and
        $\chi\in \compl[\Irr(U_2\mid\theta)]$.
  \item \label{i:c_induv}$(\tau^{U_2})^{\iota}= (\tau^{\iota})^{U_2\cap H}$ for
        $K\leq U_1 \leq U_2 \leq G$ and
        $\tau \in \compl[\Irr(U_1\mid\theta)]$.
  \item \label{i:c_conj}$(\chi^{\iota})^h = (\chi^h)^{\iota}$ for $h\in H$.
  \item \label{i:c_lin}$(\beta\chi)^{\iota} = \beta \chi^{\iota}$
        for all $\beta \in \compl[\Irr(U/K)]$.
  \item \label{i:c_aut}If $\alpha$ is a field automorphism fixing $\crp{F}$, then
        $(\chi^{\alpha})^{\iota} = (\chi^{\iota})^{\alpha}$;
        and \label{i:c_field}
        $\crp{F}(\chi)= \crp{F}(\chi^{\iota})$.
  \item \label{i:c_brauer}
        $[\chi]_{\crp{F}} = [S\tensor_{\crp{F}}\crp{F}(\chi)][\chi^{\iota}]_{\crp{F}}$
        for $\chi\in \Irr(U\mid \theta)$.
  \item \label{i:c_res}$(\chi_{U\cap H})_{\phi}
          = \psi   \chi^{\iota}$ for
          $\chi\in \compl[\Irr(U\mid \theta)]$.
  \item \label{i:c_ind}$(\xi^U)_{\theta} = \overline{\psi}\xi^{\iota^{-1}}$ for
         $\xi \in \compl[\Irr(U\cap H\mid \phi)]$.
         (Here we view $\psi$ as character on $G/K$,
          via the canonical isomorphism $G/K\iso H/L$.)
  \end{enums}
\end{thm}
\begin{proof}
  Note that
  \[ \compl[\Irr(G\mid \theta)]\iso \ZF(\compl G e_{\theta},\compl)
     \quad \text{and} \quad
     \compl[\Irr(H\mid \phi)]\iso \ZF(\compl H e_{\phi},\compl)
  \]
  naturally.
  We work first over $\compl$.
  Set $S_{\compl}= (i\compl K i)^L$ and
  $C_{\compl}= \C_{i\compl G i}(S_{\compl})$.

  By Lemma~\ref{l:fii} we have
  $\compl G e_{\theta} = \compl G i \compl G$.
  Lemma~\ref{l:fullidem} yields that restriction defines
  an isomorphism
  $\ZF(\compl G e_{\theta}, \compl)
   \to \ZF(i\compl G i, \compl)$.

  Since
  $S_{\compl}= (i\compl K i)^L
    \iso \mat_n(\compl)$ by
  Lemma~\ref{l:ctcentr}, Lemma~\ref{l:cforms} yields an
  isomorphism
  $\eps\colon \ZF(i\compl G i, \compl)\to \ZF(C_{\compl},
  \compl)$.

  Finally, the isomorphism
  $\kappa\colon \compl H e_{\phi} \to C_{\compl}$ of
  Theorem~\ref{t:C-iso} (applied with $\crp{F}=\compl$)
  yields an isomorphism
  $\kappa^{*}\colon
   \ZF(C_{\compl}, \compl)
   \to \ZF(\compl H e_{\phi},\compl)$.

  We claim that $\iota$ is the composition of these three
  isomorphisms:
  \[ \ZF(\compl G e_{\theta},\compl)
     \xrightarrow[\text{(\ref{l:fullidem})}]{\text{Res}}
     \ZF(i\compl G i, \compl)
     \xrightarrow[\text{(\ref{l:cforms})}]{\eps}
     \ZF(C_{\compl}, \compl)
     \xrightarrow[\text{(\ref{t:C-iso})}]{\kappa^{*}}
     \ZF(\compl H e_{\phi}, \compl)
  \]
  To see this,
  let $h\in H$ and $\chi\in \compl[\Irr(G\mid \theta)]$.
  Then
  \[
    \chi^{\text{Res}\: \eps \kappa^{*}}(h)
      = \chi^{\eps}( \sigma(Lh)^{-1}h )
      = \chi(s_0 \sigma(Lh)^{-1}h).
  \]
  Moreover, Lemma~\ref{l:cforms} yields that $\eps$, and thus
  $\iota = \text{Res}\cdot \eps \kappa^{*}$, is independent of the
  choice of $s_0$.

  It follows from Lemma~\ref{l:fullidem} and Lemma~\ref{l:chars}
  that the first two isomorphisms send characters of (irreducible)
  modules to characters of (irreducible) modules.
  This is true for $\kappa^{*}$, too, since $\kappa$ is an
  isomorphism.
  Thus
  $\iota\colon \ZF(\compl G e_{\theta},\compl)
   \to \ZF(\compl H e_{\phi}, \compl)$
   is an isomorphism
   that sends irreducible
   characters to irreducible characters.
   It follows that $\iota$ respects the inner product on the space
   of class functions on $G$ respective $H$.

  Of course our reasoning so far applies to any subgroup $U$
  with $K\leq U\leq G$  instead of $G$, and to $V=H\cap U$
  instead of $H$.
  Thus we get an isometry
  $\iota\colon \ZF(\compl U e_{\theta},\compl)
   \to \ZF(\compl V e_{\phi}, \compl)$ for every such subgroup
   $U$.
   We use the same letter $\iota$ to denote all these
   isometries and their union.
   We have now established that $\iota$ is well defined and
   bijective, as well as
  Properties~\ref{i:c_irr} and ~\ref{i:c_isometr}
  of $\iota$.

  To show that \ref{i:c_prop} holds,
  we choose $s_0 = (1/n)i = (1/n)e_{\theta}e_{\phi}$.
  Remember that $n = (\theta_L, \phi)_L$,
  so that $\theta(e_{\phi})= n\phi(1)$.
  Thus
  \[ \frac{\chi^{\iota}(1)}{\phi(1)}
     = \frac{\chi((1/n)e_{\theta}e_{\phi})}{\phi(1)}
     = \frac{ \chi(e_{\phi}) }{n\phi(1)}
     = \frac{ (\chi_K, \theta) \theta( e_{\phi})}{ n\phi(1)}
     = (\chi_K, \theta)
     = \frac{\chi(1)}{\theta(1)}.
      \]
  Properties~\ref{i:c_resuv}--\ref{i:c_aut}
  are consequences of a general theorem about graded
  Morita equivalences~\citep[Theorem~3.4]{marcus08},
  which applies here.
  It is, however, not difficult to prove them directly. We do this
  for~\ref{i:c_lin} and~\ref{i:c_aut}.

  Let $\beta \in \compl[\Irr(G/K)]$.
  Then
  \begin{align*}
    (\beta\chi)^{\iota}(h) &=  (\beta\chi)(s_0\sigma(h)^{-1}h).
  \intertext{Writing %
             $s_0\sigma(h)^{-1} = \sum_{k\in K}\lambda_k k$ %
               with $\lambda_k\in \crp{F}$, we get}
    (\beta\chi)^{\iota}(h) &= \sum_{k\in K} \lambda_k \beta(kh)\chi(kh)
                          = \beta(h) \sum_{k\in K} \lambda_k \chi(kh)\\
                         & = \beta(h) \chi(s_0\sigma(h)^{-1}h)
                          = \beta(h) \chi^{\iota}(h).
  \end{align*}
  This proves \ref{i:c_lin}.

  To prove \ref{i:c_aut},
  write $s_0\sigma(h)^{-1} = \sum_{k\in K}\lambda_k k$
  as above.
  Since $\lambda_k\in \crp{F}$, it follows
  \begin{align*}
    (\chi^{\alpha})^{\iota}(h)
      &= \chi^{\alpha}(s_0\sigma(Lh)^{-1}h )
       = \sum_{k\in K}\lambda_k \chi^{\alpha}(kh)
         \\
      &= \sum_{k\in K} \lambda_k \chi(kh)^{\alpha}
        = \left( \sum_{k\in K} \lambda_k \chi(kh)
         \right)^{\alpha}
        \\
      &= (\chi^{\iota}(h))^{\alpha}.
  \end{align*}
  Thus $(\chi^{\alpha})^{\iota} = (\chi^{\iota})^{\alpha}$,
  and $\crp{F}(\chi)= \crp{F}(\chi^{\iota})$ follows from this.

  Now let $\chi \in \Irr(G\mid \theta)$.
  Then $[\chi]_{\crp{F}}$ is the equivalence class of
  $\crp{F}(\chi)Ge_{\chi}$ in the Brauer group of $\crp{F}(\chi)$.
  By \ref{i:c_aut} we may assume that
  $\crp{F}=\crp{F}(\chi)=\crp{F}(\chi^{\iota})$.
  The isomorphism of Corollary~\ref{c:maincor1} sends
  $S\tensor_{\crp{F}} \crp{F}H e_{\chi^{\iota}}$ onto
  $i\crp{F}Gi e_{\chi} = i\crp{F}Ge_{\chi}i$.
  But this central simple algebra is in the equivalence class of
  $\crp{F}Ge_{\chi}$, since $ie_{\chi}$ is an idempotent.
  This proves~\ref{i:c_brauer}.

  Property~\ref{i:c_res}:
  For $\chi \in \compl[\Irr(G\mid \theta)]$ we have
  \[(\chi_H)_{\phi}(h) = \chi (he_{\phi})
    = \chi(he_{\phi}e_{\theta}) = \chi(hi).\]
  Now note that
  $hi= ih = \sigma(Lh) \sigma(Lh)^{-1} h $ for $h\in H$.
  By Lemma~\ref{l:cforms} and the definition of $\iota$, we have
  \[\chi(hi)
       = \tr_{S/\crp{F}}(\sigma(Lh)) \chi(s_0\sigma(Lh)^{-1}h)
          = \psi(h) \chi^{\iota}(h)\]
  as claimed in \ref{i:c_res}.

  To prove \ref{i:c_ind}, it suffices to show that
  $( \overline{\psi} \xi^{\iota^{-1}} , \chi )_G= (\xi^G, \chi)_G$ for all
  $\chi \in \Irr(G\mid \theta)$.
  Using what we have already proved, we get
  \begin{alignat*}{2}
    (\overline{\psi}\xi^{\iota^{-1}}, \chi)_G
      &= ( \xi^{\iota^{-1}}, \psi \chi)_G  && \\
      &= ( \xi  , (\psi\chi)^{\iota})_H
             & & \quad \text{(as $\iota$ is an isometry)} \\
      &= ( \xi, \psi \chi^{\iota})_H
         && \quad \text{(by \ref{i:c_lin}.)}  \\
      &= ( \xi, (\chi_H)_{\phi} )_H
         && \quad \text{(by \ref{i:c_res}.)} \\
      &= (\xi, \chi_H )_H
         &&\quad\text{(as $\xi\in \compl[\Irr(H\mid \phi)]$)} \\
      &= ( \xi^G, \chi )_G
  \end{alignat*}
  as was to be shown.
  The proof is complete.
\end{proof}
The bijection of the theorem  depends on the
magic\index{Magic representation} representation $\sigma$.
If such a representation exists,
it is unique up to multiplication
with a linear character of $H/L$ (with values in $\crp{F}$).
Different choices of $\sigma$ give bijections which differ by
multiplication with a linear character of $H/L$.
Note that if $\psi(h)\neq 0$ for all $h\in H$, then
$\chi^{\iota}$ is determined by the equation
$(\chi_H)_{\phi}= \psi \chi^{\iota}$.
Otherwise one needs the representation $\sigma$ to compute
$\chi^{\iota}$ from Equation~\eqref{equ:iota}.

To formulate the next result correctly,
we need the
\emph{reduced norm} of a central simple algebra $S$
over $\crp{F}$, which we denote by $\nr_{S/\crp{F}}$ or simply
$\nr$, if $S$ and $\crp{F}$ are clear from context.
Remember that it is defined as follows:
First, choose a splitting field
$\crp{E}\geq \crp{F}$ of $S$ and an isomorphism
$\eps\colon S\tensor_{\crp{F}}\crp{E}\iso \mat_n(\crp{E})$.
Then for $s\in S$
define $\nr_{S/\crp{F}}(s) = \det( \eps(s\tensor 1))$, where
$\det$ denotes the determinant of the matrix ring
$\mat_n(\crp{E})$.
It can be shown that $\nr(s)$ is independent of the particular
isomorphism $\eps$ and of the choice of $\crp{E}$, and that
$\nr(s)\in \crp{F}$~\citep[{\S}~9a]{reinerMO}.
If $\sigma\colon H/L\to S^*$ is a magic representation, then
$x\mapsto \nr(\sigma(x))$ defines a linear character,
which we denote simply by $\det \sigma$.
\begin{remark}\label{r:corrdet}
  Let $\pi$ be the set of  prime divisors of $n$.
  If there is any magic\index{Magic representation} representation,
  then there is a magic representation $\sigma$ such that
  $\det \sigma$ has order a $\pi$\nbd number.
\end{remark}
\begin{proof}
  Suppose $\sigma \colon H/L \to S$ is given.
  Let $\lambda= \det \sigma$, a linear character of $H/L$.
  Let $b$ be the $\pi'$\nbd part of $\ord(\lambda)$.
  As $n = \dim \sigma$ is $\pi$, there is $r\in \ints$ with
  $rn+1 \equiv 0 \mod b$.
  Then $\det( \lambda^r \sigma) = \lambda^{rn+1}$ has $\pi$\nbd order.
\end{proof}
Let $\alpha\in \Aut\crp{F}$ be a field automorphism. Then $\alpha$
extends naturally to an automorphism of the group algebra
$\crp{F}G$, which we denote also by $\alpha$.
Remember that $\Aut\crp{F}$ acts on the set
of class functions $\chi\colon G\to \crp{F}$ by
$\chi^{\alpha}(g)=\chi(g)^{\alpha}$ ($g\in G$).
As usual, a class function extends linearly to a funcion
$\crp{F}G\to \crp{F}$.
Note that then
$\chi^{\alpha}(c^{\alpha})=\chi(c)^{\alpha}$ for
$c =\sum_{g}c_g g\in \crp{F}G$ arbitrary.
This will be used in the proof of the next proposition.
\begin{prop}\label{p:fieldisos}
  Let
  $\mathcal{B}=(G,H,K,L,\theta, \phi)$
  be a configuration such that Hypothesis~\ref{h:bconf}
  holds
  over the field $\crp{F}$,
  and let $\alpha\in \Aut \crp{F}$.
  If $\sigma\colon H/L \to S$ is a magic representation for
  $\mathcal{B}$ with magic character $\psi$, then
  \[\sigma^{\alpha}\colon H/L\to S
  \quad \text{defined by}\quad
  \sigma^{\alpha}(h) = \sigma(h)^{\alpha}\]
  is a magic representation for
  $\mathcal{B}^{\alpha}=(G,H,K,L,\theta^{\alpha},\phi^{\alpha})$
  with magic character $\psi^{\alpha}$.
  Let $\iota(\sigma)$ and $\iota(\sigma^{\alpha})$ be the
  associated character correspondences.
  Then
  \[ \left( \chi^{\alpha} \right)^{ \iota(\sigma^{\alpha}) }
      = \left( \chi^{\iota(\sigma)} \right)^{ \alpha }
      \quad\text{for} \quad \chi\in \Irr(G\mid
  \theta).\]
\end{prop}
\begin{proof}
  Note that $e_{\theta}^{\alpha} = e_{\theta^{\alpha}}$ for the
  central primitive idempotent belonging to $\theta$.
  Thus $\sigma^{\alpha}\colon H/L\to S^{\alpha}
        = (i^{\alpha}\crp{F}^{\alpha}K i^{\alpha})^{L}$
  with $i^{\alpha}= e_{\theta^{\alpha}}e_{\phi^{\alpha}}$
  is a magic representation for the
  configuration $\mathcal{B}^{\alpha}$.

  The isomorphism $\alpha$ maps the reduced trace of $S$ to the reduced trace
  of $S^{\alpha}$, by uniqueness of the reduced trace, and so
  $\psi^{\alpha}$ is the character of $\sigma^{\alpha}$.

  Let $\chi\in \Irr(G\mid \theta)$ and pick
  $s_0\in S$ with reduced trace $1$.
  Thus $\tr_{S^{\alpha}/\crp{F}} (s_0^{\alpha}) =1$.
  Therefore
 \begin{align*}
  ( \chi^{\alpha} )^{\iota(\sigma^{\alpha} )} ( h )
    &= \chi^{\alpha} ( s_0^{\alpha}
                      \sigma^{\alpha} ( L h )^{-1}
                      h )
     = \chi^{\alpha}\left( (s_0 \sigma(Lh)^{-1}h)^{\alpha}\right)
        \\
    &= \chi(s_0 \sigma(Lh)^{-1}h)^{\alpha}
     = \chi^{\iota(\sigma)}( h)^{\alpha}
     = \left( \chi^{\iota(\sigma)} \right)^{ \alpha }(h) ,
 \end{align*}
 as was to be shown.
\end{proof}
\begin{prop}\label{p:centcorr}
Assume Hypothesis~\ref{h:bconf} and let $C\leq\C_H(S)$
with $L\leq C$.
\begin{enums}
\item \label{i:centcorrbij}For every $\chi \in \Irr( KC\mid \theta )$, there is a
      unique
      $\xi \in \Irr( C\mid \phi )$ such that $(\chi_C, \xi)_C>0$.
      This  defines a bijection between
      $\Irr(KC\mid \theta)$ and $\Irr( C\mid \phi )$,
      which has all the properties of Theorem~\ref{t:corr}
      with $\psi= n1_{C/L}$.
      This bijection is invariant under all automorphisms of $G$
      fixing $K$, $L$, $\theta$ and $\phi$.
\item \label{i:centcorr}Assume that $C\nteq H$, that $\xi\in \Irr(C\mid \phi)$
      is $H$\nbd stable and
      $\chi $ corresponds to $ \xi$.
      Then $T=(e_{\xi}\crp{F}KCe_{\chi}e_{\xi})^C$ and
      $S=(e_{\phi}\crp{F}Ke_{\theta}e_{\phi})^L$ are isomorphic as
      $H/C$\nbd algebras.
\end{enums}
\end{prop}
  Note that if $n=1$, then $\C_H(S)=H$.
  This case of Part~\ref{i:centcorrbij} is known
  and can be proved just using
  the orthogonality relations
  of ordinary character theory~\citep[Lemma~4.1]{i84}.
\begin{proof}
 Theorem~\ref{t:corr} applies to the configuration
 $(KC, C, K, L, \theta, \phi)$ with
 $\sigma\colon C/L \to S$, $\sigma(c)=1_S= i$ for all $c\in C$.
 Observe that then $\psi = n 1_{C}$.
 From Property~\ref{i:c_res} in Theorem~\ref{t:corr} it now follows that
 the restriction $\chi_C$ of
 every $\chi \in \Irr(KC\mid \theta)$ has a unique constituent in
 $\Irr(C\mid \phi)$, which occurs with multiplicity $n$, as
 claimed. Conversely, for $\xi \in \Irr(C\mid \phi)$, the induced
 character $\xi^G$ has a unique constituent lying in
 $\Irr(KC\mid \theta)$, by Property~\ref{i:c_ind}.
 The desired bijection
  is thus just the
 correspondence $\iota$ of Theorem~\ref{t:corr}.
 Part~\ref{i:centcorrbij} follows.

 Let $j= e_{\chi}e_{\xi}$, where we assume that
 $\xi$ and $ \chi$ are $H$\nbd invariant.
 Thus $T= (j\crp{F}KCj)^C$.
 The idempotent $j$ centralizes $S$, as $e_{\chi}$ is in the
 center of $\crp{F}KC$, and  $e_{\xi}\in \crp{F} C$
 with $C\leq \C_H(S)$.
 It follows that for every $s\in S$,
 we have $sj=js=jsj\in T$.
 As $e_{\chi}e_{\theta}= e_{\chi}$ and
 $e_{\xi}e_{\phi}= e_{\xi}$, it follows that
 $ij=j$.
 The map $s\mapsto sj$ is thus an algebra homomorphism
 from $S$ into $T$. Since $S$ is simple and
 $\dim_\crp{F} S = n =\dim_\crp{F} T$, the map is an isomorphism.
 It is compatible with the action of $H$ as $j$ is $H$\nbd stable.
\end{proof}
\begin{remark}
  Part~\ref{i:centcorrbij} says that
  $\abs{\Irr(KC\mid \theta)\cap \Irr(KC\mid \xi)}=1$
  for all $\xi\in \Irr(C\mid \phi)$.
  This contains the well known description of
  the characters of central products. Namely, let $G$ be the
  central product of $K$ and $C$, set $L=K\cap C$
  (so $L\leq \Z(G)$),
  and suppose $\phi\in \Irr L$.
  Then Part~\ref{i:centcorrbij} applies with $H=C$
  for every $\theta\in \Irr(K\mid \phi)$,
  since $C$ centralizes $K$.
  We get a bijection between
  $\Irr(KC\mid \phi)$ and
  $\Irr(K\mid \phi)\times \Irr(C\mid \phi)$.
\end{remark}
Part~\ref{i:centcorr} has an interesting consequence,
a ``going down'' result:
\begin{cor}\label{c:centcorrdown}
  In the situation of Part~\ref{i:centcorr} of
  Proposition~\ref{p:centcorr},
  assume that there exists a magic representation for
  the configuration
  $G, H, KC, C, \chi, \xi$.
  Suppose $L\leq D\leq C$, let
  $\widetilde{\chi}\in \Irr(KD\mid \theta)$ and
  $\widetilde{\xi}\in \Irr(LD\mid \phi)$ and assume
  $(\widetilde{\chi}, \widetilde{\xi})_{D}>0$.
  Suppose $D\nteq H$ and $\widetilde{\xi}$ is invariant in $H$.
  Then there is a magic representation $\sigma$
  for the configuration
  $(G,H,KD,D,\widetilde{\chi}, \widetilde{\xi})$
  with $C/D\leq \ker \sigma$.
\end{cor}
\begin{figure}[ht]
\setlength{\unitlength}{0.45ex}
\centering
  \begin{picture}(90,90)(-8,-5)
    \put(2.5,32.5){\line(1,1){10}}    
    \put(17.5, 47.5){\line(1,1){10}}  
    \put(32.5, 62.5){\line(1,1){15}}  
    \put(32.5,2.5){\line(1,1){10}}    
    \put(47.5, 17.5){\line(1,1){10}}  
    \put(62.5, 32.5){\line(1,1){15}}  
    \put(2.5, 27.5){\line(1,-1){25}}  
    \put(17.5,42.5){\line(1,-1){25}}  
    \put(32.5,57.5){\line(1,-1){25}}  
    \put(52.5, 77.5){\line(1,-1){25}} 
    \put(-2, 28){$K$} \put(-7,27){$\theta$}
    \put(11.1, 43){$KD$} \put(7, 47){$\widetilde{\chi}$}
    \put(26.1, 58){$KC$} \put(22, 62){$\chi$}
    \put(48.2, 78){$G$}
    \put(28, -1.8){$L$} \put(23, -3.5){$\phi$}
    \put(43,13){$D$}  \put(48,8){$\widetilde{\xi}$}
    \put(58, 28){$C$} \put(62,23){$\xi$}
    \put(78,48){$H$}
  \end{picture}
  \caption{Corollary~\ref{c:centcorrdown}}
\end{figure}
\begin{proof}
  Clear since
  \[( e_{\widetilde{\xi}} \crp{F} KD e_{\widetilde{\chi}} e_{\widetilde{\xi}}
   )^{D}
   \iso
   S
   \iso
   (e_{\xi}\crp{F} KC e_{\xi}e_{\chi} )^{C}
  \] as $H/C$-algebras for all such configurations.
\end{proof}
In terms of cohomology classes and in view of Remark~\ref{r:cohomcl},
this means
that if $[\chi]_{ (H/C,\crp{F}) }=[\xi]_{ (H/C,\crp{F}) }$, then
$[\widetilde{\chi}]_{ (H/D,\crp{F}) }
 =[\widetilde{\xi}]_{ (H/D,\crp{F}) }$.
This is related to some results
obtained in~\citep{imana07}.

Note that if  the configuration $(G,H,K,L,\theta,\phi)$
admits a magic representation $\sigma$ such that
$C/L\leq \ker \sigma$, then $\sigma$ may be viewed as a magic
representation for the configuration
$(G,H, KC, C, \chi, \xi)$.
It may be possible, however, that there are magic representations
for the configuration of $\theta$ and $\phi$, but no
magic representation whose
kernel
contains $C/L$.

\section{Semi-invariant characters}\label{sec:semi-inv}
Throughout this section,
let $\crp{F}$ be a field
with algebraic closure $\alcl{\crp{F}}$.
Let $K\nteq G$ and
$\theta\in \Irr_{\alcl{\crp{F}}}(K)$.
If $p=\cha \crp{F}>0$, then we assume that $\theta$ has
$p$\nbd defect zero.
There is a unique central primitive idempotent
$e_{\theta}$
of $\alcl{\crp{F}}K$, such that $\theta$
does not vanish on $\alcl{\crp{F}}K e_{\theta}$.
The assumption assures that
$\alcl{\crp{F}}K e_{\theta}\iso \mat_{d}(\alcl{\crp{F}})$
where $d$ is the dimension of a module affording $\theta$.

If $\alpha$ is an automorphism of a field $\crp{E}$, then
we denote also by $\alpha$ its natural extension to the group algebra
$\crp{E}G$, where $\alpha$ centralizes $G$.
We need the following well known fact.
\begin{lemma}\label{l:Traceidempotent}
  \[ e = \T_{\crp{F}}^{\crp{F}(\theta)}(e_{\theta})
    := \sum_{\alpha\in \Gal(\crp{F}(\theta)/\crp{F}) }
        (e_{\theta})^{\alpha}\]
  is
  the unique central primitive idempotent
  of\/ $\crp{F}K$
  for which $\theta(\crp{F}K e)\neq 0$.
\end{lemma}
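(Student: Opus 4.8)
The plan is to use the standard description of the central primitive idempotents of $\crp{F}K$ in terms of those of $\alcl{\crp{F}}K$. Recall that $\Gal(\alcl{\crp{F}}/\crp{F})$ acts on the set of central primitive idempotents of $\alcl{\crp{F}}K$, and that the central primitive idempotents of $\crp{F}K$ are precisely the Galois sums $\sum_{\alpha} f^{\alpha}$ over full orbits, where $f$ ranges over a set of orbit representatives. So the first step is to identify the orbit of $e_{\theta}$ under $\Gal(\alcl{\crp{F}}/\crp{F})$: since $e_{\theta}$ is determined by $\theta$ (it is the unique central primitive idempotent of $\alcl{\crp{F}}K$ not vanishing on $\theta$), and $(e_{\theta})^{\alpha} = e_{\theta^{\alpha}}$, the stabilizer of $e_{\theta}$ is the stabilizer of the character $\theta$, which by Galois theory corresponds to the subgroup $\Gal(\alcl{\crp{F}}/\crp{F}(\theta))$. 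Hence the orbit of $e_{\theta}$ is $\{(e_{\theta})^{\alpha} : \alpha \in \Gal(\crp{F}(\theta)/\crp{F})\}$, and these are pairwise distinct.

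Next I would verify that $e := \sum_{\alpha \in \Gal(\crp{F}(\theta)/\crp{F})} (e_{\theta})^{\alpha}$ actually lies in $\crp{F}K$ and is a central primitive idempotent there. Being an orbit sum, $e$ is fixed by all of $\Gal(\alcl{\crp{F}}/\crp{F})$ on coefficients, hence lies in $\crp{F}K$ (using that $\crp{F}K = (\alcl{\crp{F}}K)^{\Gal(\alcl{\crp{F}}/\crp{F})}$, which is fine since we may replace $\alcl{\crp{F}}$ by a finite Galois extension of $\crp{F}$ large enough to contain the relevant values; in positive characteristic the $p$-defect-zero hypothesis guarantees $e_{\theta}$ makes sense and the block splits, so there is no separability issue). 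It is idempotent because the summands are orthogonal idempotents, and central because each $(e_{\theta})^{\alpha}$ is central in $\alcl{\crp{F}}K$. Primitivity as a central idempotent of $\crp{F}K$ follows from the general orbit-sum description, or directly: if $e = e_1 + e_2$ with $e_i$ nonzero central idempotents of $\crp{F}K$, then each $e_i$ is a sum of a subset of the $(e_{\theta})^{\alpha}$ (since these are all the central primitive idempotents of $\alcl{\crp{F}}K$ under $e$), and Galois-invariance of $e_i$ forces that subset to be a union of Galois orbits; but $\{(e_{\theta})^{\alpha}\}$ is a single orbit, so one of $e_1, e_2$ is zero, a contradiction.

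It remains to check the non-vanishing and uniqueness statements. For non-vanishing: $\theta$ extends to a $\crp{F}(\theta)$-valued linear map on $\alcl{\crp{F}}K$, and $\theta(e_{\theta}) = \theta(1_K) = \theta(1) \neq 0$ while $\theta((e_{\theta})^{\alpha}) = 0$ for $\alpha \neq 1$ (because $(e_{\theta})^{\alpha} = e_{\theta^{\alpha}}$ with $\theta^{\alpha} \neq \theta$ a different irreducible character, on whose block $\theta$ vanishes); summing gives $\theta(\crp{F}Ke) \ni \theta(e) = \theta(1) \neq 0$, and indeed $\theta$ restricted to $\crp{F}Ke$ is nonzero. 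For uniqueness: any central primitive idempotent $e'$ of $\crp{F}K$ with $\theta(\crp{F}Ke') \neq 0$ must, when decomposed into central primitive idempotents of $\alcl{\crp{F}}K$, contain some conjugate $(e_{\theta})^{\alpha}$ (otherwise $\theta$ would vanish on $\crp{F}Ke'$); but then by Galois-invariance $e'$ contains the whole orbit, i.e. $e' \geq e$; since both are primitive in $\crp{F}K$, $e' = e$. I don't expect a serious obstacle here — the only point requiring mild care is the interplay of $\crp{F}K$ with $\alcl{\crp{F}}K$ and the Galois action in characteristic $p$, which is precisely why the $p$-defect-zero hypothesis was imposed (so that $e_{\theta}$ exists and the block is a matrix algebra over $\alcl{\crp{F}}$); I would cite the standard reference for the orbit-sum description of central primitive idempotents rather than reprove it.
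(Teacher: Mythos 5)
Your overall strategy -- pass to a finite Galois extension of $\crp{F}$ containing the relevant values, and use the orbit-sum description of central primitive idempotents -- is sound, and in substance it is the same Galois-invariance argument the paper runs directly (orthogonality and centrality of the conjugates $(e_{\theta})^{\alpha}$, primitivity because any smaller invariant idempotent would have to absorb the whole orbit). Your precaution of replacing $\alcl{\crp{F}}$ by a finite Galois extension is also the right move, since $(\alcl{\crp{F}}K)^{\Gal(\alcl{\crp{F}}/\crp{F})}=\crp{F}K$ can fail for non-perfect $\crp{F}$, which the paper allows in this section.

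However, your non-vanishing step is wrong under the paper's hypotheses. You claim $\theta(e)=\theta(1)\neq 0$. Indeed $\theta(e)=\theta(e_{\theta})=\theta(1)$, since $\theta$ kills the blocks of the conjugates $\theta^{\alpha}$, $\alpha\neq 1$; but when $\cha\crp{F}=p>0$ the standing assumption is that $\theta$ has $p$-defect zero, i.e.\ $\theta(1)$ and $\abs{K}$ have the same $p$-part. So whenever $p$ divides $\abs{K}$ (which the paper permits -- in Section~\ref{sec:aboveglaub} the group $K$ is not assumed to be a $p'$-group) we have $p\mid\theta(1)$ and hence $\theta(1)=0$ in $\alcl{\crp{F}}$. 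Thus the element $e$ itself does not witness $\theta(\crp{F}Ke)\neq 0$, and your closing assertion that ``$\theta$ restricted to $\crp{F}Ke$ is nonzero'' is left without justification exactly in the modular case the lemma is designed for. The repair is the paper's one-line argument: if $\theta$ vanished on $\crp{F}Ke$, then by $\alcl{\crp{F}}$-linearity it would vanish on the $\alcl{\crp{F}}$-span of $\crp{F}Ke$, which is $\alcl{\crp{F}}Ke\supseteq \alcl{\crp{F}}Ke_{\theta}$, contradicting the defining property of $e_{\theta}$. With that substitution (and the uniqueness argument you give, which is fine), your proof is correct; the remaining ingredients -- pairwise distinctness of the $(e_{\theta})^{\alpha}$ because each defect-zero block contains a unique irreducible character, and primitivity via Galois invariance -- match the paper's proof.
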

\begin{proof}
Let $\Gamma= \Gal(\crp{F}(\theta)/\crp{F})$.
  Note that
  $e_{\theta^{\alpha}}= (e_{\theta})^{\alpha}$
  for $\alpha\in \Gamma$.
  Obviously, only the identity of $\Gamma$ fixes $\theta$ or $e_{\theta}$.
  Thus for $\alpha\neq \beta\in \Gamma$, we have
  $(e_{\theta})^{\alpha}(e_{\theta})^{\beta}=0$, so $e$ is an
  idempotent.

  It is clear that
  $e\in \Z(\crp{F}K)$ and that
  $\theta$ does not vanish on $\crp{F}Ke$.
  (Otherwise $\theta$ would vanish on $\alcl{\crp{F}}Ke_{\theta}$.)
  We claim that
  $e$ is a primitive idempotent in $\Z(\crp{F}K)$.
  Indeed, if $0\neq f\in \Z(\crp{F}K)$ is an idempotent with
  $fe=f$, then $e_{\theta}^{\alpha}f= e_{\theta}^{\alpha}$ for some
  $\alpha\in \Gamma$.
  Then for any $\sigma \in \Gamma$, we have
  $(e_{\theta})^{\alpha\sigma}f =
   (e_{\theta})^{\alpha\sigma}f^{\sigma}
   = (e_{\theta}^{\alpha} f)^{\sigma}
   = (e_{\theta}^{\alpha})^{\sigma}$, and thus $f=e$.
\end{proof}
\begin{notat}
 We write $e_{ (\theta, \crp{F}) }$ for the idempotent of
 Lemma~\ref{l:Traceidempotent}. In particular, if
 $\crp{F}=\crp{F}(\theta)$, then
 $e_{ (\theta, \crp{F}) }= e_{\theta}$.
\end{notat}
\begin{lemma}\label{l:fe_iso}
  \[  \crp{F}G_{\theta}e_{ (\theta, \crp{F}) } \ni a \mapsto
      ae_{\theta} \in \crp{F}(\theta)G_{\theta}e_{\theta} \]
  is an isomorphism of\/ $\crp{F}$\nbd algebras.
\end{lemma}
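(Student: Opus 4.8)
The plan is to define the map $\rho\colon a\mapsto ae_{\theta}$ and verify it is a well-defined, injective, surjective homomorphism of $\crp{F}$-algebras. That it is $\crp{F}$-linear and multiplicative is immediate, since multiplication by the idempotent $e_\theta$ is a ring homomorphism on the subring $e_{(\theta,\crp{F})}\crp{F}G_\theta$ (note $e_\theta e_{(\theta,\crp{F})}=e_\theta$, as $e_{(\theta,\crp{F})}$ is the orbit sum containing $e_\theta$ by Lemma~\ref{l:Traceidempotent}); so $\rho$ is a homomorphism into $\crp{F}(\theta)G_\theta e_\theta$, and I should first check the image actually lands in the claimed target — i.e.\ that $\crp{F}G_\theta e_\theta$ spans the same $\crp{F}(\theta)$-space as $\crp{F}(\theta)G_\theta e_\theta$. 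For surjectivity, given $\lambda\in\crp{F}(\theta)$ and $g\in G_\theta$, I want a preimage of $\lambda g e_\theta$; the point is that $\lambda e_\theta\in\crp{F}(\theta)K e_\theta$ can already be written as $be_\theta$ with $b\in\crp{F}G_\theta e_{(\theta,\crp{F})}$, so it suffices to treat the case $U=K$, $G_\theta$ replaced by $K$.

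The key step, then, is the case of $K$ itself: show $\crp{F}K e_{(\theta,\crp{F})}\to\crp{F}(\theta)Ke_\theta$, $a\mapsto ae_\theta$, is an isomorphism. Write $\Gamma=\Gal(\crp{F}(\theta)/\crp{F})$. The algebra $\crp{F}(\theta)K$ decomposes as a product of its block components, among which $\crp{F}(\theta)Ke_\theta$ is one factor; the $\Gamma$-orbit of $e_\theta$ consists of $\abs{\Gamma}$ pairwise orthogonal idempotents $e_\theta^\alpha$ (distinct since only the identity fixes $\theta$), and their sum is $e_{(\theta,\crp{F})}$. Galois descent — or simply a dimension count — gives $\dim_{\crp{F}}\crp{F}Ke_{(\theta,\crp{F})}=\dim_{\crp{F}(\theta)}\bigl(\crp{F}(\theta)K e_{(\theta,\crp{F})}\bigr)=\sum_{\alpha\in\Gamma}\dim_{\crp{F}(\theta)}\crp{F}(\theta)Ke_\theta^\alpha=\abs{\Gamma}\dim_{\crp{F}(\theta)}\crp{F}(\theta)Ke_\theta$, while $\dim_{\crp{F}}\crp{F}(\theta)Ke_\theta=\abs{\Gamma}\dim_{\crp{F}(\theta)}\crp{F}(\theta)Ke_\theta$ as well. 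So source and target have equal $\crp{F}$-dimension, and it remains to check that $\rho$ is injective on $\crp{F}Ke_{(\theta,\crp{F})}$: if $ae_\theta=0$ with $a\in\crp{F}Ke_{(\theta,\crp{F})}$, then applying $\alpha\in\Gamma$ gives $a^\alpha e_\theta^\alpha=0$; but $a$ is $\Gamma$-invariant (it lies in $\crp{F}K$) so $a e_\theta^\alpha=0$ for every $\alpha$, hence $a\cdot\sum_\alpha e_\theta^\alpha = a e_{(\theta,\crp{F})}=a=0$. Injectivity for general $U$ between $K$ and $G_\theta$ follows the same way, since any $a\in\crp{F}U e_{(\theta,\crp{F})}$ is fixed by $\Gamma$.

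I expect the main obstacle to be purely bookkeeping: making precise that $\crp{F}G_\theta e_\theta$ (an $\crp{F}$-algebra, a priori only a subring of $\crp{F}(\theta)G_\theta e_\theta$) is in fact all of $\crp{F}(\theta)G_\theta e_\theta$ as a ring — equivalently, that the $\crp{F}(\theta)$-span of $\crp{F}G_\theta e_\theta$ is everything and that $\rho$ is bijective simultaneously. The cleanest route is the dimension count above combined with injectivity via $\Gamma$-invariance; once those are in place, a bijective $\crp{F}$-algebra homomorphism is an isomorphism and the lemma is proved. Alternatively one can invoke that $e_{(\theta,\crp{F})}$ has $\crp{F}(\theta)$ as the field of values of its characters and that $\crp{F}(\theta)\otimes_{\crp{F}}\crp{F}K e_{(\theta,\crp{F})}\iso\prod_{\alpha\in\Gamma}\crp{F}(\theta)Ke_\theta^\alpha$, but the direct argument is shorter and self-contained.
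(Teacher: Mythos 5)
Your proof is correct, but it gets bijectivity by a different mechanism than the paper. The paper simply writes down a two-sided inverse: the coefficientwise Galois trace $\T_{\crp{F}}^{\crp{F}(\theta)}$, restricted to $\crp{F}(\theta)G_{\theta}e_{\theta}$. The two verifications needed there — that $b^{\alpha}e_{\theta}=0$ for $b\in\crp{F}(\theta)G_{\theta}e_{\theta}$ and $1\neq\alpha\in\Gal(\crp{F}(\theta)/\crp{F})$, hence $\T(b)e_{\theta}=b$, and that $\T(ae_{\theta})=a\T(e_{\theta})=ae_{(\theta,\crp{F})}=a$ for $\Gamma$-fixed $a$ — are exactly the orthogonality and invariance facts you use for injectivity, so the core input is the same. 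What you do differently is to replace the explicit inverse by an abstract count: injectivity via $ae_{\theta^{\alpha}}=0$ for all $\alpha$, surjectivity via equality of $\crp{F}$-dimensions at the level of $K$ (using $\crp{F}(\theta)\tensor_{\crp{F}}\crp{F}Ke_{(\theta,\crp{F})}\iso\crp{F}(\theta)Ke_{(\theta,\crp{F})}=\bigoplus_{\alpha}\crp{F}(\theta)Ke_{\theta^{\alpha}}$) together with the reduction of a general element $\lambda ge_{\theta}$ to the $K$-case by centrality of $e_{\theta}$ in $\crp{F}(\theta)G_{\theta}$. That route is fine and self-contained, but it costs you the dimension bookkeeping and the reduction step, and it does not produce the inverse map itself, which is the extra payoff of the paper's argument (the trace formula is what makes the isomorphism manifestly compatible with further restrictions and multiplications by idempotents, as used later, e.g. in Lemma~\ref{l:StensorFLfiFKi} and Theorem~\ref{t:semicorrchars}). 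One small presentational point: the worry about the image "landing in the target" is vacuous — $ae_{\theta}\in\crp{F}(\theta)G_{\theta}e_{\theta}$ is immediate — and what you are really addressing there is surjectivity, which your dimension count settles.
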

\begin{proof}
  Since $e_{\theta}\in \Z(\crp{F}(\theta)G_{\theta})$, the map is
  multiplicative.

  The inverse is given by the field trace
  $T=\T^{\crp{F}(\theta)}_{\crp{F}}$,
  extended from
  $\crp{F}(\theta)$ to
  $\crp{F}(\theta)G$ and
  then restricted to
  $\crp{F}(\theta)G_{\theta}e_{\theta}$:
  If $b\in \crp{F}(\theta)G_{\theta}e_{\theta}$, then
  $b^{\alpha}\in \crp{F}(\theta)G_{\theta}e_{\theta^{\alpha}}$ for
  $\alpha\in \Gal(\crp{F}(\theta)/\crp{F})$.
  If $\alpha\neq 1$, then $b^{\alpha} e_{\theta}=0$.
  Thus $T(b)e_{\theta} = be_{\theta}=b$.
  Conversely, for $a\in \crp{F}G_{\theta}e_{(\theta,\crp{F})}$ we have
  $T(ae_{\theta})= aT(e_{\theta})=ae_{(\theta,\crp{F})}=a$, since
  $T(e_{\theta})=e_{ (\theta, \crp{F}) }$.
\end{proof}
As a consequence, $\Z(\crp{F}Ke_{ (\theta, \crp{F}) })\iso \crp{F}(\theta)$.
This is of course well known.
An isomorphism is given by the central character $\omega_{\theta}$.

The following notation will be convenient:
 Let $K \nteq G$ and $e \in \Z(\crp{F}K)$.
 Let $G_e= \{g\in G \mid e^g =e\}$ and write $e^G$ for the
 idempotent defined by
 $e^G:= \T_{G_e}^G(e) = \sum_{g\in [G:G_e]}e^g$.

The following result is also well known:
\begin{prop}\label{p:cliffordcorr}
  Set  $e= e_{ (\theta, \crp{F}) }$ and $f= e^G$
  and let $T=G_e$ be the inertia group of $e$.
  Then\/ $\crp{F}Gf \iso \mat_{\abs{G:T}}(\crp{F}Te)$.
  Induction defines a bijection between\/
  $\Irr(T\mid \theta)$ and\/ $\Irr(G\mid \theta)$ that preserves field
  of values and Schur indices over $\crp{F}$.
  More precisely,
    $\crp{F}(\tau^G)=\crp{F}(\tau)$ and\/
  $[\tau^G]_{\crp{F}}=[\tau]_{\crp{F}}$
  for $\tau\in \Irr(T\mid \theta)$.
\end{prop}
(Here $[\tau]_{\crp{F}}$ is the equivalence class in the Brauer group
 associated with $\tau$, see~\ref{not:brauerclass}.)
\begin{proof}
  Let $G=\bigdcup_{u\in R} T u$.
  Since $e^g \neq e$ for $g\in G\setminus T$, it follows
  $e^g e = 0$ and thus $ege= 0$.
  From this it follows easily that
  $\{ E_{u,v}= u^{-1}e v \mid u, v \in R\}$ is a full set of
  matrix units in
  $\crp{F}G f$ and that
  $e\crp{F}G e = \crp{F}T e$.
  It is then routine to verify that
  \begin{align*}
    \crp{F}G f \ni a &\mapsto (e ua v^{-1} e)_{u,v\in R}
                                \in \mat_{\abs{G:T}}(\crp{F}T e)\\
    \intertext{and}
    \mat_{\abs{G:T}}(\crp{F}Te) \ni (b_{u,v})_{u,v}
                     &\mapsto \sum_{u,v\in R} u^{-1} b_{u,v}v
                              \in \crp{F}Ge\crp{F}G =\crp{F}G f
  \end{align*}
  are inverse isomorphisms.

  This isomorphism yields a bijection between isomorphism classes
  of $\crp{F}Gf$\nbd modules and
  $\crp{F}Te$\nbd modules.
  Let $V$ be an $\crp{F}Te$\nbd module.
  Then $V^{\abs{G:T}}$ is a module
  over $\mat_{\abs{G:T}}(\crp{F}Te)\iso \crp{F}Gf$,
  and $V^{\abs{G:T}}$ as $\crp{F}Gf$\nbd module is isomorphic
  to $V\tensor_{\crp{F}T}\crp{F}G$ via
  the map $(v_u)_{u\in R} \mapsto \sum_{u\in R} v_u \tensor u$.
  Thus induction yields a bijection between isomorphism classes of
  $\crp{F}Te$\nbd modules and $\crp{F}Gf$\nbd modules.
  (Alternatively, one can check directly that
   $(V\tensor_{\crp{F}T} \crp{F}G)e \iso V$ as
   $\crp{F}T$\nbd modules, and that, if $W$ is
   an $\crp{F}Gf$\nbd module, then
   $We\tensor_{\crp{F}T}\crp{F}G\iso W$ as
   $\crp{F}G$\nbd modules.)

  Applying the above reasoning over $\compl$ instead of $\crp{F}$
  yields that induction is a bijection between
  $\Irr(T\mid \theta)$ and $\Irr(G\mid \theta)$
  (this is the well known Clifford correspondence, anyway).
  That it preserves fields of values and
  Brauer equivalence classes can now be
  seen as follows: Suppose $\tau\in\Irr(T\mid \theta)$.
  Let $V$ be a simple $\crp{F}Te$\nbd module whose character
  contains $\tau$ as constituent.
  Then $\enmo_{\crp{F}T} V$ is a division ring in
  $[\tau]_{\crp{F}}$, and
  $\crp{F}(\tau)\iso \Z(\enmo_{\crp{F}T}V)$.
  Since we have
  $\enmo_{\crp{F}T}V \iso \enmo_{\crp{F}G}
  (V\tensor_{\crp{F}T}\crp{F}G)$, it follows
  $[\tau^G]_{\crp{F}}=[\tau]_{\crp{F}}$ and
  $\crp{F}(\tau^G)\iso \crp{F}(\tau)$.
  Since clearly $\crp{F}(\tau^G)\subseteq \crp{F}(\tau)$,
  we have $\crp{F}(\tau^G)=\crp{F}(\tau)$ as claimed.
\end{proof}
In general, $G_{\theta}$ may be smaller than $G_e=T$.
For $\xi\in \Irr(G_{\theta}\mid \theta)$, the field
$\crp{F}(\xi^{T})$ is  contained  in $\crp{F}(\xi)$, but may be
strictly smaller. If this happens, the Schur index of $\xi^T$ may be bigger
than that of $\xi$.
\begin{defi}\citep[Definition 1.1]{i81b}
  Let $K\nteq G$ and $\theta\in \Irr K$.
  We say that $\theta$ is \emph{semi-invariant} in $G$
  over the field $\crp{F}$,
  if for every $g\in G$ there is
  $\alpha\in \Gal(\crp{F}(\theta)/\crp{F})$ such that
  $\theta^g = \theta^{\alpha}$.
  If $\theta$ is semi-invariant over $\rats$, then we  say
  it is semi-invariant.
\end{defi}
\begin{lemma}
   The following assertions are equivalent:
  \begin{enumequiv}
  \item $\theta$ is semi-invariant over $\crp{F}$ in $G$.
  \item The  idempotent $e_{ (\theta, \crp{F}) }$  is invariant in $G$.
  \item A simple $\crp{F}K$\nbd module whose character contains
        $\theta$ as constituent is invariant in $G$.
  \end{enumequiv}
\end{lemma}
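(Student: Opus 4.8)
The plan is to route all three conditions through the block idempotent $e:=e_{(\theta,\crp{F})}$ and through the fact — which holds under the standing assumption of this section, that is, $p$\nbd defect zero when $\cha\crp{F}=p>0$ — that $\crp{F}Ke$ is a \emph{simple} algebra, hence has a unique simple module up to isomorphism. Two elementary observations drive everything. First, by Lemma~\ref{l:Traceidempotent}, $e$ is the unique central primitive idempotent of $\crp{F}K$ on which $\theta$ does not vanish, and $e=\sum_{\alpha\in\Gamma}e_\theta^\alpha$ with $\Gamma=\Gal(\crp{F}(\theta)/\crp{F})$; in particular $e$ depends only on the $\Gamma$\nbd orbit of $\theta$, so $e_{(\theta^\alpha,\crp{F})}=e$ for all $\alpha\in\Gamma$. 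Second, for $g\in G$ conjugation by $g$ is an $\crp{F}$\nbd algebra automorphism of $\crp{F}K$ carrying $e$ to $e^g$ and central primitive idempotents to central primitive idempotents, and turning the property ``$\theta$ does not vanish here'' into ``$\theta^g$ does not vanish here''; so by the uniqueness in Lemma~\ref{l:Traceidempotent}, $e^g=e_{(\theta^g,\crp{F})}$.

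For (i)$\Leftrightarrow$(ii): if $\theta$ is semi-invariant over $\crp{F}$, then for each $g$ we have $\theta^g=\theta^\alpha$ for some $\alpha\in\Gamma$, so $e^g=e_{(\theta^g,\crp{F})}=e_{(\theta^\alpha,\crp{F})}=e$ and $e$ is $G$-invariant. Conversely, if $e^g=e$, then $\theta^g$ does not vanish on $\crp{F}Ke$, hence not on the larger algebra $\alcl{\crp{F}}Ke=\bigoplus_{\alpha\in\Gamma}\alcl{\crp{F}}Ke_{\theta^\alpha}$, so it fails to vanish on some summand $\alcl{\crp{F}}Ke_{\theta^\alpha}$; since $\theta^g\in\Irr_{\alcl{\crp{F}}}K$ is nonzero only on $\alcl{\crp{F}}Ke_{\theta^g}$, this gives $e_{\theta^g}=e_{\theta^\alpha}$, that is, $\theta^g=\theta^\alpha$, which is semi-invariance over $\crp{F}$.

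For (ii)$\Leftrightarrow$(iii): I would first show that a simple $\crp{F}K$-module has character containing $\theta$ exactly when it lies in the block $\crp{F}Ke$. Such a module lies in a unique block $\crp{F}Kf$ with $f$ central primitive; extending scalars to $\alcl{\crp{F}}$, the condition that $\theta$ be a constituent of its character becomes $e_\theta f=e_\theta$, which by the uniqueness in Lemma~\ref{l:Traceidempotent} means $f=e$. Conversely the unique simple module $W$ of $\crp{F}Ke$ has $\theta$ in its character, because $W\tensor_{\crp{F}}\alcl{\crp{F}}$ is a faithful $\alcl{\crp{F}}Ke$-module and so meets every simple component of $\alcl{\crp{F}}Ke=\bigoplus_{\alpha\in\Gamma}\alcl{\crp{F}}Ke_{\theta^\alpha}$, in particular the one affording $\theta$. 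Thus there is, up to isomorphism, a single simple $\crp{F}K$-module $W$ with $\theta$ as a constituent of its character, and the $G$-conjugates of $W$ are precisely the simple modules of the blocks $G$-conjugate to $\crp{F}Ke$. Since a block has a unique simple module and modules in distinct blocks are non-isomorphic, $W$ is $G$-invariant if and only if every $G$-conjugate of $\crp{F}Ke$ equals $\crp{F}Ke$, that is, if and only if $e$ is $G$-invariant.

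All of this is bookkeeping with central primitive idempotents and extension of scalars; the single point that uses the hypotheses of the section rather than pure formalism is ``block $\Rightarrow$ unique simple module'', which needs $\crp{F}Ke$ to be a simple algebra. In characteristic zero this is automatic, and in characteristic $p$ it is exactly where $p$\nbd defect zero of $\theta$ is used: it yields $\alcl{\crp{F}}Ke_\theta\iso\mat_d(\alcl{\crp{F}})$, and Lemma~\ref{l:fe_iso} (with $G$ replaced by $K$) identifies $\crp{F}Ke$ with the central simple $\crp{F}(\theta)$\nbd algebra $\crp{F}(\theta)Ke_\theta$. I expect that to be the only delicate spot.
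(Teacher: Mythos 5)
Your proof is correct and takes essentially the same route as the paper: the equivalence (i)$\Leftrightarrow$(ii) via $e^g=e_{(\theta^g,\crp{F})}$ and the uniqueness in Lemma~\ref{l:Traceidempotent} (which the paper dismisses as ``clear''), and (ii)$\Leftrightarrow$(iii) via the observation that a simple $\crp{F}K$\nbd module has $\theta$ as a constituent of its character exactly when it belongs to the block $\crp{F}Ke_{(\theta,\crp{F})}$, which is artinian simple and hence has a unique simple module. You simply spell out in more detail the steps the paper states tersely, including the role of the defect-zero hypothesis in making $\crp{F}Ke_{(\theta,\crp{F})}$ simple.
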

\begin{proof}
   The equivalence between (i) and (ii) is clear.

   Let $V$ be a simple $\crp{F}K$\nbd module.
   The character of $V$
   contains $\theta$ if and only if
   $Ve_{ (\theta, \crp{F}) }=V$ (by Lemma~\ref{l:Traceidempotent}),
   and $V$ is determined uniquely up to
   isomorphism by this property, since
   $\crp{F}Ke_{ (\theta, \crp{F}) }$ is artinian simple.
   The equivalence between (ii) and (iii) follows.
\end{proof}
\begin{lemma}\label{l:fnsg}
  Let $K\nteq G$ and $\theta\in \Irr K$ be
  semi-invariant over $\crp{F}$.
  Set $\Gamma=\Gal(\crp{F}(\theta)/\crp{F})$.
  \begin{enums}
  \item \label{i:fnsggal}For every $g\in G$ there is a unique
        $\alpha_g\in \Gamma$ such that
        $\theta^{g \alpha_g}=\theta$.
  \item \label{i:fnsghom}The map $g\mapsto \alpha_g$ is a group homomorphism from
        $G$ into $\Gamma$ with kernel $G_{\theta}$.
  \item \label{i:centchgalois}
        For $g\in G$ and $z\in \Z(\crp{F}K )$ we have
        $\omega_{\theta}(z^g) = \omega_{\theta}(z)^{\alpha_g}$,
        where
        \[\omega_{\theta}\colon \Z(\crp{F}K)\to \crp{F}(\theta)
        \]
        is the central character associated with $\theta$.
  \end{enums}
\end{lemma}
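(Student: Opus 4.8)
The plan is to deduce all three parts fairly directly: \ref{i:fnsggal} is an immediate reformulation of semi-invariance, \ref{i:fnsghom} is a formal consequence of the uniqueness clause in \ref{i:fnsggal}, and \ref{i:centchgalois} is a short computation with the central character. The one structural fact used throughout is that the conjugation action of $G$ and the Galois action of $\Gamma=\Gal(\crp{F}(\theta)/\crp{F})$ on $\Irr_{\alcl{\crp{F}}}K$ commute. For \ref{i:fnsggal}: semi-invariance gives, for each $g$, some $\alpha\in\Gamma$ with $\theta^{g}=\theta^{\alpha}$, and then $\alpha_{g}:=\alpha^{-1}$ works; for uniqueness, if $(\theta^{g})^{\beta_{1}}=\theta=(\theta^{g})^{\beta_{2}}$, then applying conjugation by $g^{-1}$ (using commutativity together with $(\theta^{g})^{g^{-1}}=\theta$) gives $\theta^{\beta_{1}}=\theta^{g^{-1}}=\theta^{\beta_{2}}$, so $\theta^{\beta_{1}\beta_{2}^{-1}}=\theta$; since $\crp{F}(\theta)$ is generated over $\crp{F}$ by the values of $\theta$, only the identity of $\Gamma$ fixes $\theta$, whence $\beta_{1}=\beta_{2}$.

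For \ref{i:fnsghom}, by this uniqueness it suffices to check $(\theta^{gh})^{\alpha_{g}\alpha_{h}}=\theta$. Writing $\theta^{gh}=(\theta^{g})^{h}$ and using that conjugation by $h$ commutes with $\alpha_{g}$, we get $((\theta^{g})^{h})^{\alpha_{g}}=((\theta^{g})^{\alpha_{g}})^{h}=\theta^{h}$, hence $(\theta^{gh})^{\alpha_{g}\alpha_{h}}=(\theta^{h})^{\alpha_{h}}=\theta$, as wanted. (The group $\Gamma$ is abelian, being the Galois group of a subfield of a cyclotomic extension, so the order of $\alpha_{g}$ and $\alpha_{h}$ here is immaterial.) Finally $\alpha_{g}=1$ is equivalent to $\theta^{g}=\theta$, i.e.\ to $g\in G_{\theta}$, so the kernel is $G_{\theta}$.

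For \ref{i:centchgalois} I would avoid the naive identity $\omega_{\theta}(z)=\theta(z)/\theta(1)$ — it is unavailable when $\cha\crp{F}=p$ divides $\theta(1)$, as happens for defect-zero blocks — and instead use the defining relation $z\,e_{\theta}=\omega_{\theta}(z)\,e_{\theta}$ in $\alcl{\crp{F}}K$ for $z\in\Z(\crp{F}K)$. Applying an automorphism $\alpha\in\Gamma$ of $\crp{F}(\theta)K$ (which fixes $K$ and fixes $z$, as $z$ has coefficients in $\crp{F}$) and using $e_{\theta}^{\alpha}=e_{\theta^{\alpha}}$ gives $\omega_{\theta^{\alpha}}(z)=\omega_{\theta}(z)^{\alpha}$; applying conjugation by $g$ (which fixes scalars) and using $e_{\theta}^{g}=e_{\theta^{g}}$ gives $\omega_{\theta^{g}}(z)=\omega_{\theta}(z^{g^{-1}})$, equivalently $\omega_{\theta}(z^{g})=\omega_{\theta^{g^{-1}}}(z)$. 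Now conjugating $(\theta^{g})^{\alpha_{g}}=\theta$ by $g^{-1}$ yields $\theta^{g^{-1}}=\theta^{\alpha_{g}}$, so $\omega_{\theta^{g^{-1}}}=\omega_{\theta^{\alpha_{g}}}$, and combining the displayed identities gives $\omega_{\theta}(z^{g})=\omega_{\theta^{\alpha_{g}}}(z)=\omega_{\theta}(z)^{\alpha_{g}}$.

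I expect no serious obstacle: everything reduces to manipulating the two commuting actions and the defining relation for $\omega_{\theta}$. The points requiring attention are the conjugation conventions (on which side $z^{g}$ and $\theta^{g}$ twist), keeping track of $\alpha_{g}$ versus $\alpha_{g}^{-1}$ in \ref{i:centchgalois}, and — as noted — not dividing by $\theta(1)$ in positive characteristic.
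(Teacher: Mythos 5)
Your proof is correct, and in substance it runs parallel to the paper's, but the packaging differs enough to be worth noting. For parts~(i) and~(ii) the paper simply cites Isaacs~\citep[Lemma~2.1]{i81b}, whereas you prove them directly from the two facts that conjugation and the Galois action on characters commute and that only the identity of $\Gamma$ fixes $\theta$; this makes the lemma self-contained, and your remark that $\Gamma$ is abelian (values of $\theta$ lie in a cyclotomic extension, also in characteristic $p$) correctly disposes of the composition-order issue in~(ii). For part~(iii) the paper argues with a representation $D$ affording $\theta$, defining $\omega_{\theta}$ by $D(z)=\omega_{\theta}(z)I$ and computing $D^{\alpha g}(z^{g})=D^{\alpha}(z)$ after extending $\alpha_{g}$ to an automorphism of $\alcl{\crp{F}}$; you instead use the characterization $ze_{\theta}=\omega_{\theta}(z)e_{\theta}$ and track how $e_{\theta}$ transforms under $\alpha$ and under conjugation. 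The two computations are equivalent, and both correctly avoid the formula $\omega_{\theta}(z)=\theta(z)/\theta(1)$, which indeed breaks down in characteristic $p$ for defect-zero blocks. One small point of hygiene: you let $\alpha\in\Gamma$ act on $\crp{F}(\theta)K$ and on $e_{\theta}$, but a priori $e_{\theta}$ is only known to lie in $\alcl{\crp{F}}K$; either observe (as one can) that its coefficients are fixed by all automorphisms over $\crp{F}(\theta)$ and hence lie in $\crp{F}(\theta)$, or simply extend $\alpha$ to $\Aut(\alcl{\crp{F}})$ as the paper does --- the identities $e_{\theta}^{\alpha}=e_{\theta^{\alpha}}$ and $e_{\theta}^{g}=e_{\theta^{g}}$, and with them your whole computation, go through unchanged.
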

\begin{proof}
  Assertions~\ref{i:fnsggal} and~\ref{i:fnsghom} are proved in a paper of
  Isaacs~\citep[Lemma~2.1]{i81b}.

  Let $D\colon K\to \mat_{d}(\alcl{\crp{F}})$ be a
  representation affording $\theta$.
  Then $D^g$, defined by $D^g(k^g)=D(k)$ for $k\in K$,
  affords $\theta^g$.
  For $\alpha\in \Gal(\alcl{\crp{F}}/\crp{F})$
  we may define a representation $D^{\alpha}$ by letting
  $\alpha$ act on the matrix entries; it is clear that $D^{\alpha}$
  affords $\theta^{\alpha}$.
  Now $\omega_{\theta}$ is defined by
  $D(z)= \omega_{\theta}(z)I$ for
  $z\in \Z(\alcl{\crp{F}}K)$.
  We may extend $\alpha_g$ to $\alpha\in \Aut(\alcl{\crp{F}})$. Then
  \begin{align*}
    \omega_{\theta}(z^g)I
      &= \omega_{\theta^{\alpha g}}(z^g)I
       = D^{\alpha g}(z^g)
       = D^{\alpha}(z).
  \end{align*}
  If $z\in \Z(\crp{F}K)$, then
  $D^{\alpha}(z)
    = D(z)^{\alpha}
    = \omega_{\theta}(z)^{\alpha}I$.
  Thus~\ref{i:centchgalois} follows.
\end{proof}
\section{Magic crossed representations}\label{sec:magiccrossed}
In this section, we assume the following situation:
\begin{hyp}\label{h:bconf2}
Let $G$ be a group, $K\nteq G$ and $H\leq G$ with $G=HK$ and set
$L=H\cap K$.
Let $\alcl{\crp{F}}$ be an algebraically closed field and
let $\phi\in \Irr_{\alcl{\crp{F}}} L$ and $\theta\in \Irr_{\alcl{\crp{F}}} K$ be
characters of simple, projective modules over $\alcl{\crp{F}}L$ respective $\alcl{\crp{F}}K$
and $\crp{F}\subseteq \alcl{\crp{F}}$ a field such that the following conditions hold:
\begin{enums}
\item $n=(\theta_L, \phi)>0$.
\item $\crp{F}(\phi)= \crp{F}(\theta)$.
\item For every $h\in H$ there is
      $\gamma= \gamma_h\in  \Gal( \crp{F}(\phi)/\crp{F})$ such that
      $\theta^{h\gamma}= \theta$ and
      $\phi^{h\gamma} = \phi$.
\end{enums}
The field may have characteristic $p>0$, then the conditions imply
that $\phi$ and $\theta$ have $p$\nbd defect zero.
\end{hyp}
This situation typically arises when
$\theta$ and $\phi$ correspond under some ``natural''
correspondence.
We mention two examples:
\begin{examples}\label{es:conf2}\hfill
\begin{enums}
\item Suppose $\phi\in \Irr L$ and $\theta\in \Irr K$ are fully ramified with respect
      to each other. This means that $\theta$ vanishes on
      $K\setminus L$ and $\theta_L= n\phi$ with $n^2=\abs{K:L}$.
      Equivalently, $e_{\theta} = e_{\phi}$.
      It is clear that then $\rats(\theta)=\rats(\phi)$.
      Given $H$ as in Hypothesis~\ref{h:bconf2},
      we see that $\theta$ is semi-invariant in $G$ if and only if
      $\phi$ is semi-invariant in $H$.
      If this is the case, Hypothesis~\ref{h:bconf2} is true
      for $\crp{F}=\rats$.
\item Let $\pi$ be a set of primes and
      suppose that $G$ is $\pi$\nbd separable.
      Let $K= \grO_{\pi'}(G)$ and
      $N/K = \grO_{\pi}(G/K)$.
      Let $P$ be a
      Hall $\pi$\nbd subgroup of $N$
       and set $L=\C_K(P)$.
      Then for $H=\N_G(P)$ we have $G=HK$ and $L=H\cap K$.
      Glauberman-Isaacs correspondence defines a bijection between
      $(\Irr_{\alcl{\crp{F}}}(K))^P$ and $\Irr_{\alcl{\crp{F}}}L$.
      Here, $\crp{F}$ may be a field of characteristic zero or
      of characteristic $p$ with $p\in \pi$.
      By its naturalness, this correspondence commutes
      with field and group automorphisms.
      Thus if $\theta\in (\Irr_{\alcl{\crp{F}}}(K))^P$ and
      $\phi\in \Irr_{\alcl{\crp{F}}}(L)$ correspond
      and are semi-invariant in $H$,
      then Hypothesis~\ref{h:bconf2} holds.
            We will study a generalization of the case $\pi=\{p\}$ in
      Section~\ref{sec:aboveglaub}.
\end{enums}
\end{examples}
Now assume that Hypothesis~\ref{h:bconf2} holds.
As we go along, we will introduce further notation,
which is meant to carry through.
\begin{lemma}
\[i =\sum_{\gamma\in \Gal( \crp{F}(\phi)/\crp{F})}
e_{\phi}^{\gamma}e_{\theta}^{\gamma}
\]
is a $H$\nbd stable nonzero idempotent in $\crp{F}Ke_{ (\theta, \crp{F}) }$, and we have
$e_{ (\theta, \crp{F}) }i = i = ie_{ (\theta, \crp{F}) }$ and
$e_{(\phi,\crp{F})}i = i = ie_{ (\phi, \crp{F}) }$.
\end{lemma}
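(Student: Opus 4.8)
The plan is to work inside the group algebra $\crp{F}(\phi)K = \crp{F}(\theta)K$ and analyse the summands $i_{\gamma} := e_{\phi}^{\gamma}e_{\theta}^{\gamma} = (e_{\phi}e_{\theta})^{\gamma}$, for $\gamma$ ranging over $\Gamma := \Gal(\crp{F}(\phi)/\crp{F})$, so that $i = \sum_{\gamma\in\Gamma} i_{\gamma}$. Two standing facts will be used repeatedly. First, each $e_{\theta^{\gamma}} = e_{\theta}^{\gamma}$ lies in the centre of $\crp{F}(\phi)K$, since $e_{\theta}$ has coefficients in $\crp{F}(\theta) = \crp{F}(\phi)$ and is central in $\alcl{\crp{F}}K$. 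Second, because $\crp{F}(\theta)$ (respectively $\crp{F}(\phi)$) is precisely the field generated by the values of $\theta$ (respectively $\phi$), only the identity of $\Gamma$ fixes $\theta$, and only the identity fixes $\phi$; hence $\gamma\mapsto\theta^{\gamma}$ and $\gamma\mapsto\phi^{\gamma}$ are injective on $\Gamma$, exactly as in the proof of Lemma~\ref{l:Traceidempotent}.

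For idempotency, first note $i_{\gamma}^{2} = e_{\phi}^{\gamma}e_{\theta}^{\gamma}e_{\phi}^{\gamma}e_{\theta}^{\gamma} = (e_{\phi}^{\gamma})^{2}(e_{\theta}^{\gamma})^{2} = i_{\gamma}$, using that $e_{\theta}^{\gamma}$ is central. For $\gamma\neq\delta$ I would show $i_{\gamma}i_{\delta} = 0$: pushing the central idempotent $e_{\theta^{\gamma}}$ past $e_{\phi^{\delta}}$ gives $i_{\gamma}i_{\delta} = e_{\phi^{\gamma}}e_{\phi^{\delta}}e_{\theta^{\gamma}}e_{\theta^{\delta}}$, and $e_{\theta^{\gamma}}e_{\theta^{\delta}} = 0$ because $\theta^{\gamma}$ and $\theta^{\delta}$ are distinct irreducible characters of $K$, so $e_{\theta^{\gamma}}$ and $e_{\theta^{\delta}}$ are distinct, hence orthogonal, central primitive idempotents of $\alcl{\crp{F}}K$. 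Thus $i$ is a sum of pairwise orthogonal idempotents, so $i^{2} = i$. For $i\neq 0$: taking an $\alcl{\crp{F}}K$-module $V$ affording $\theta$, the element $i_{1} = e_{\phi}e_{\theta}$ acts on $V$ as the projection onto the $\phi$-homogeneous component of $V_{L}$, which is nonzero since $n = (\theta_{L},\phi) > 0$ (this is the computation in the proof of Lemma~\ref{l:ctcentr}); as $i i_{1} = i_{1}$ by orthogonality, $i\neq 0$.

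The idempotent relations are then routine computations with the $i_{\gamma}$. For any $\delta$, commuting the central $e_{\theta^{\delta}}$ past $e_{\phi^{\gamma}}$ gives $e_{\theta^{\delta}}i_{\gamma} = e_{\phi^{\gamma}}e_{\theta^{\delta}}e_{\theta^{\gamma}}$, which is $i_{\gamma}$ if $\delta = \gamma$ and $0$ otherwise; summing over $\delta$ yields $e_{(\theta,\crp{F})}i = i$, and $i e_{(\theta,\crp{F})} = i$ follows since $e_{(\theta,\crp{F})}$ is central in $\crp{F}K$ (Lemma~\ref{l:Traceidempotent}). Similarly $e_{\phi^{\delta}}i_{\gamma} = e_{\phi^{\delta}}e_{\phi^{\gamma}}e_{\theta^{\gamma}} = \delta_{\gamma\delta}i_{\gamma}$ and $i_{\gamma}e_{\phi^{\delta}} = e_{\phi^{\gamma}}e_{\theta^{\gamma}}e_{\phi^{\delta}} = e_{\phi^{\gamma}}e_{\phi^{\delta}}e_{\theta^{\gamma}} = \delta_{\gamma\delta}i_{\gamma}$, so summing over $\delta$ gives $e_{(\phi,\crp{F})}i = i = i e_{(\phi,\crp{F})}$.

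It remains to see $i\in\crp{F}K$ (then $i\in\crp{F}Ke_{(\theta,\crp{F})}$ by the previous paragraph) and that $i$ is $H$-stable. For $\alpha\in\Gamma$ one has $i^{\alpha} = \sum_{\gamma\in\Gamma}(e_{\phi}e_{\theta})^{\alpha\gamma} = i$ after reindexing by the permutation $\gamma\mapsto\alpha\gamma$ of $\Gamma$; since $\crp{F}(\phi)/\crp{F}$ is Galois, the fixed subring of $\Gamma$ on $\crp{F}(\phi)K$ is $\crp{F}K$, whence $i\in\crp{F}K$. For $h\in H$: conjugation by $h$ normalises $L$ (as $L^{h} = H^{h}\cap K^{h} = L$) and commutes with the Galois action on coefficients, so $(e_{\phi}e_{\theta})^{h} = e_{\phi^{h}}e_{\theta^{h}}$; the crucial input is condition~(iii) of Hypothesis~\ref{h:bconf2}, which supplies for each $h$ a \emph{single} $\gamma\in\Gamma$ with $\theta^{h} = \theta^{\gamma}$ and $\phi^{h} = \phi^{\gamma}$, so that $(e_{\phi}e_{\theta})^{h} = (e_{\phi}e_{\theta})^{\gamma}$ and hence $i^{h} = \sum_{\delta\in\Gamma}(e_{\phi}e_{\theta})^{\delta\gamma} = i$. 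The only genuinely delicate point is this last one: one needs conjugation by an element of $H$ to twist $\theta$ and $\phi$ by the \emph{same} Galois automorphism, which is exactly what Hypothesis~\ref{h:bconf2}(iii) guarantees, and which is the reason $i$ must be taken to be the ``diagonal'' sum $\sum_{\gamma}e_{\phi}^{\gamma}e_{\theta}^{\gamma}$ rather than the product $e_{(\phi,\crp{F})}e_{(\theta,\crp{F})}$, whose cross terms need not vanish.
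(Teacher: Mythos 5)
Your proposal is correct and follows essentially the same route as the paper: orthogonality of the Galois-conjugate idempotents $e_{\theta}^{\gamma}$ (resp.\ $e_{\phi}^{\gamma}$) gives the relations with $e_{(\theta,\crp{F})}$ and $e_{(\phi,\crp{F})}$, nonvanishing comes from $e_{\phi}e_{\theta}\neq 0$ (i.e.\ $n>0$), and $H$-stability is exactly the reindexing argument using the element $\gamma_h$ supplied by Hypothesis~\ref{h:bconf2}(iii). You merely spell out some steps the paper leaves implicit (idempotency of $i$ and Galois-invariance forcing $i\in\crp{F}K$), which is harmless.
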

\begin{proof}
  We have $e_{ (\theta, \crp{F}) }= \sum_{\gamma\in \Gal( \crp{F}(\theta)/\crp{F})}
  e_{\theta}^{\gamma}$.
  Thus $e_{ (\theta, \crp{F}) }i = i=ie_{ (\theta, \crp{F}) }$ follows from
  $e_{\theta}^{\gamma}e_{\theta}^{\gamma'} = 0$
  for $\gamma\neq \gamma'\in \Gal(\crp{F}(\theta)/\crp{F})$.
  A similar argument shows that $e_{ (\phi, \crp{F}) }i=i= ie_{ (\phi, \crp{F}) }$.
  By assumption, $e_{\phi}e_{\theta}\neq 0$ and thus
  $i\neq 0$, as $e_{\theta}i = e_{\phi}e_{\theta}$.
  For $h\in H$, we have
  \[ i^h = \sum_{\gamma} e_{\phi}^{h\gamma}e_{\theta}^{h\gamma}
         = \sum_{\gamma} e_{\phi}^{\gamma_h\gamma} e_{\theta}^{\gamma_h\gamma}
         = i\]
  as desired.
\end{proof}
As $\crp{F}Ke_{ (\theta, \crp{F}) }$ is simple, it follows that
$\crp{F}Ki\crp{F}K = \crp{F}Ke_{ (\theta, \crp{F}) } $ and thus
$i\crp{F}Ki$ and $\crp{F}Ke_{ (\theta, \crp{F}) }$ are
Morita\index{Morita equivalence} equivalent.
\begin{lemma}\label{l:centerisos}
  \[ \Z(i\crp{F}Ki)  \iso  \Z(\crp{F}Ke_{ (\theta, \crp{F}) })
                 \iso \crp{F}(\theta) = \crp{F}(\phi)
                 \iso \Z(\crp{F}Le_{ (\phi, \crp{F}) })
  \]
  as fields with  $H$\nbd action.
\end{lemma}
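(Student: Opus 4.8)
The plan is to exhibit the displayed chain as a composite of four ring isomorphisms --- each of which is either already available or entirely routine --- and then to check that all four become $H$\nbd equivariant once the correct $H$\nbd action is put on each term; the one substantive point will be that the two natural $H$\nbd actions on the field $\crp{F}(\theta)=\crp{F}(\phi)$, the one transported from $K$ and $\theta$ and the one transported from $L$ and $\phi$, agree.

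For the first isomorphism I would argue as follows. As noted just before the statement, $\crp{F}Ki\crp{F}K=\crp{F}Ke_{(\theta,\crp{F})}=:A$, and since $ie_{(\theta,\crp{F})}=e_{(\theta,\crp{F})}i=i$ the idempotent $i$ is full in the simple algebra $A$, with $iAi=i\crp{F}Ki$. As the centre of a ring is a Morita invariant, $z\mapsto zi$ restricts to an isomorphism $\Z(A)\to\Z(iAi)=\Z(i\crp{F}Ki)$ (injectivity and surjectivity follow directly from $AiA=A$, and both centres are fields since $A$ and $iAi$ are simple artinian). Because $i^h=i$ and $H$ normalises $K$, we have $(zi)^h=z^hi$, so this isomorphism --- and hence its inverse --- is $H$\nbd equivariant for the conjugation actions on both sides.

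For the middle part I would note that $\theta$ is fixed by every inner automorphism of $K$, hence $K$\nbd invariant, and by the third condition of Hypothesis~\ref{h:bconf2} it is semi-invariant over $\crp{F}$ in $H$; since $G=HK$ it is therefore semi-invariant over $\crp{F}$ in $G$, so $e_{(\theta,\crp{F})}$ is $H$\nbd stable and $H$ acts on $A$ by conjugation. The central character gives an isomorphism $\omega_\theta\colon\Z(A)\to\crp{F}(\theta)$ (the remark following Lemma~\ref{l:fe_iso}), and by part~\ref{i:centchgalois} of Lemma~\ref{l:fnsg} one has $\omega_\theta(z^h)=\omega_\theta(z)^{\alpha_h}$, where $\alpha_h\in\Gal(\crp{F}(\theta)/\crp{F})$ is the automorphism of part~\ref{i:fnsggal} of that lemma, characterised by $\theta^{h\alpha_h}=\theta$. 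Thus $\omega_\theta$ is equivariant once $H$ acts on $\crp{F}(\theta)$ through the homomorphism $h\mapsto\alpha_h$ of part~\ref{i:fnsghom}. Symmetrically $L\nteq H$, the character $\phi$ is semi-invariant over $\crp{F}$ in $H$, and $\omega_\phi$ yields an $H$\nbd equivariant isomorphism $\crp{F}(\phi)\to\Z(\crp{F}Le_{(\phi,\crp{F})})$ when $H$ acts on $\crp{F}(\phi)$ through $h\mapsto\beta_h$ with $\phi^{h\beta_h}=\phi$. Finally $\crp{F}(\theta)=\crp{F}(\phi)$ by the second condition of Hypothesis~\ref{h:bconf2}, and the third condition supplies, for each $h\in H$, a \emph{single} $\gamma_h\in\Gal(\crp{F}(\phi)/\crp{F})$ with both $\theta^{h\gamma_h}=\theta$ and $\phi^{h\gamma_h}=\phi$; by the uniqueness in part~\ref{i:fnsggal} of Lemma~\ref{l:fnsg} this forces $\alpha_h=\gamma_h=\beta_h$, so the two actions on $\crp{F}(\theta)=\crp{F}(\phi)$ coincide.

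Composing the four $H$\nbd equivariant isomorphisms then gives the claim. The hard part is precisely this last identification of the two $H$\nbd actions on $\crp{F}(\theta)=\crp{F}(\phi)$ --- which is exactly what the third condition of Hypothesis~\ref{h:bconf2} is there to guarantee --- while everything else is bookkeeping with Lemmas~\ref{l:fe_iso} and~\ref{l:fnsg} together with the Morita invariance of the centre.
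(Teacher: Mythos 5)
Your proof is correct and follows essentially the same route as the paper: the map $z\mapsto zi$ (using that $i$ is a full, $H$\nbd stable idempotent in the simple algebra $\crp{F}Ke_{(\theta,\crp{F})}$) gives the first isomorphism, and the central characters $\omega_\theta$, $\omega_\phi$ together with Lemma~\ref{l:fnsg} give the rest. Your explicit verification that the two $H$\nbd actions on $\crp{F}(\theta)=\crp{F}(\phi)$ coincide (via the uniqueness of $\gamma_h$) is a point the paper leaves implicit in ``the same reasoning applies to $\phi$,'' but it is the same underlying argument.
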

\begin{proof}
  Since
  $\crp{F}K e_{ (\theta, \crp{F}) }
   = \crp{F}K i\crp{F}K$, the map
  $z\mapsto zi$ is an isomorphism between
  $\Z(\crp{F}K e_{ (\theta, \crp{F}) } ) $ and $\Z(i\crp{F}K i)$.
  It commutes with the action of $H$, as $i$ is invariant in $H$.

  The central character
  $\omega_{\theta}$ restricts to an isomorphism
  $\Z(\crp{F}K e_{ ( \theta, \crp{F}) })\iso \crp{F}(\theta)$
  and commutes with the action of $H$ by
   Part~\ref{i:centchgalois} of Lemma~\ref{l:fnsg}.
  The same reasoning applies to $\phi$.
   This completes the proof.
\end{proof}
(Alternatively, one can prove this lemma using
 Lemma~\ref{l:fe_iso}.)
\begin{lemma}\label{l:StensorFLfiFKi}
Set $Z= \Z(i\crp{F}Ki)$ and
let $S= (i\crp{F}Ki)^L$.
Then $S$ is a simple subalgebra of $i\crp{F}Ki$ with
 center $Z \iso \crp{F}(\theta)$, and dimension $n^2$ over $Z$,
and
\[\C_{i\crp{F}Ki}(S) = \crp{F}Li \iso \crp{F}Le_{ (\phi, \crp{F}) }.\]
\end{lemma}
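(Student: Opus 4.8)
The plan is to reduce the statement to the invariant case already treated in Lemma~\ref{l:ctcentr}, by passing to the field $\crp{E} := \crp{F}(\theta) = \crp{F}(\phi)$. From the paragraph preceding the statement we have $\crp{F}Ki\crp{F}K = \crp{F}Ke_{(\theta,\crp{F})}$, so $i\crp{F}Ki$ is simple, and by Lemma~\ref{l:centerisos} its centre $Z$ is a field isomorphic to $\crp{F}(\theta)$. In particular $Z \subseteq (i\crp{F}Ki)^L = S$, so $S$ is a $Z$-algebra, and $\crp{F}Li$ is a subalgebra of $i\crp{F}Ki$ because $i$ commutes with $\crp{F}L$ (each $e_\theta^\gamma$ and $e_\phi^\gamma$ centralises $\crp{F}K \supseteq \crp{F}L$, hence so does $i$).

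First I would apply Lemma~\ref{l:fe_iso} with the group $K$ in the role of $G$ (here $K_\theta = K$) to get the $\crp{F}$-algebra isomorphism $\Phi \colon \crp{F}Ke_{(\theta,\crp{F})} \to \crp{E}Ke_\theta$, $a \mapsto ae_\theta$. Since $e_\theta^\gamma e_\theta = 0$ for $1 \neq \gamma \in \Gal(\crp{E}/\crp{F})$, one gets $\Phi(i) = ie_\theta = e_\phi e_\theta =: i_0$; and since $e_\theta$ centralises $\crp{F}K$, the map $\Phi$ intertwines the conjugation actions of $L$ on the two algebras. Using $i\crp{F}Ki = i\crp{F}Ke_{(\theta,\crp{F})}i$, it follows that $\Phi$ restricts to an $\crp{F}$-algebra isomorphism $i\crp{F}Ki \to i_0\crp{E}Ki_0$ that carries $S$ onto $S_0 := (i_0\crp{E}Ki_0)^L$, $Z$ onto $\Z(i_0\crp{E}Ki_0)$, and $\crp{F}Li$ onto the image of $\crp{F}L$ in $i_0\crp{E}Ki_0$.

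Next I would note that the basic configuration $(K,L,K,L,\theta,\phi)$ satisfies Hypothesis~\ref{h:bconf} over the field $\crp{E}$: the role of $H$ is played by $L$, the characters $\theta$ and $\phi$ are (trivially) invariant in $L$, one has $\crp{E}(\theta) = \crp{E}(\phi) = \crp{E}$, and in characteristic $p$ the defect-zero hypothesis is part of Hypothesis~\ref{h:bconf2}; moreover the idempotent attached to this configuration in Hypothesis~\ref{h:bconf} is exactly $e_\phi e_\theta = i_0$. Hence Lemma~\ref{l:ctcentr} applies and gives that $S_0$ is central simple over $\crp{E}$ of dimension $n^2$ and that $\C_{i_0\crp{E}K i_0}(S_0) = \crp{E}Li_0 \iso \crp{E}Le_\phi$. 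Transporting back along $\Phi$, we conclude that $S$ is simple with $\Z(S) = Z \iso \crp{F}(\theta)$ and $\dim_Z S = \dim_\crp{E} S_0 = n^2$, and that $\C_{i\crp{F}Ki}(S) = \Phi^{-1}(\crp{E}Li_0)$. To see that this last algebra is $\crp{F}Li$, I would check directly that $\crp{F}Li \subseteq \C_{i\crp{F}Ki}(S)$ (elements of $S$ commute with $\crp{F}L$ and are fixed under multiplication by $i$) and that the homomorphism $\crp{F}Le_{(\phi,\crp{F})} \to \crp{F}Li$, $a \mapsto ai$, is injective because $\crp{F}Le_{(\phi,\crp{F})}$ is simple; comparing $\crp{F}$-dimensions, which are both equal to $\dim_\crp{F}(\crp{E}Le_\phi)$ by Lemma~\ref{l:fe_iso} and Lemma~\ref{l:ctcentr}, then forces $\crp{F}Li = \C_{i\crp{F}Ki}(S)$ and $\crp{F}Li \iso \crp{F}Le_{(\phi,\crp{F})}$.

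The one place that needs care is the bookkeeping in the second paragraph — verifying that $\Phi$ is $L$-equivariant and that it matches up $S$, $Z$ and $\crp{F}Li$ with the corresponding objects over $\crp{E}$. I do not expect any real difficulty there, since it all rests on $e_\theta$ being a central idempotent of $\crp{E}K$ together with the identity $ie_\theta = e_\phi e_\theta$; once this is done, the assertion is simply Lemma~\ref{l:ctcentr} read off over $\crp{E}$.
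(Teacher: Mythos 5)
Your proposal is correct and follows essentially the same route as the paper: both reduce to the invariant case of Lemma~\ref{l:ctcentr} over $\crp{E}=\crp{F}(\theta)$ by means of the isomorphism $a\mapsto ae_{\theta}$ of Lemma~\ref{l:fe_iso} (the paper packages this as a commutative diagram with vertical isomorphisms $\cdot e_{\phi}$, $\cdot i_0$, $\cdot e_{\theta}$). The only difference is cosmetic: where you identify $\C_{i\crp{F}Ki}(S)$ with $\crp{F}Li$ by an inclusion plus a dimension count, the paper reads this off directly from the left square of its diagram, since Lemma~\ref{l:fe_iso} gives $\crp{E}Le_{\phi}=\crp{F}Le_{(\phi,\crp{F})}e_{\phi}$ and hence $\crp{E}Li_0=\crp{F}Li_0$.
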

\begin{proof}
  Set $i_0 = e_{\phi}e_{\theta}$.
  The following diagram is commutative:
  \[ \begin{CD}
       \crp{F}L e_{ (\phi, \crp{F}) } @>{\cdot i}>> i\crp{F}K i @>{\subseteq}>>
       \crp{F}Ke_{ (\theta, \crp{F}) }\\
       @V{\cdot e_{\phi}}VV   @V{\cdot i_0}VV
       @V{\cdot e_{\theta}}VV \\
       \crp{F}(\theta)Le_{\phi} @>{\cdot i_0}>>
       i_0\crp{F}(\theta)K i_0
       @>{\subseteq}>> \crp{F}(\theta)Ke_{\theta}.
     \end{CD}
  \]
  (Note that $ie_{\theta} = i e_{\phi}= ii_0 = i_0$.)
  By Lemma~\ref{l:fe_iso}, its vertical maps are isomorphisms.
  The result now follows from
  Lemma~\ref{l:ctcentr}.
\end{proof}
As in Section~\ref{sec:magic}, we have that
$i\crp{F}Ki \iso S\tensor_{Z} \crp{F}Le_{(\phi,\crp{F})}$.
But now $H$ may act nontrivially on $Z$, so $Z$ is in general not in the
center of $i\crp{F}Gi$.
To be precise, we have the following:
\begin{remark}
$\Z(i\crp{F}Gi)\cap Z = Z^{H}$.
\end{remark}
\begin{proof}
  Clear since
  $i\crp{F}Gi = \sum_{h\in H} i\crp{F}Ki h$.
\end{proof}
What we need is a subalgebra $S_0$ of $S$ such
that $\Z(S_0)=Z^H =: Z_0$ and $S=S_0 Z$.
We now add to Hypothesis~\ref{h:bconf2} the assumption that
there is such a subalgebra $S_0$ in $S$.
For example, if $S\iso \mat_n(Z)$, then
$\mat_n(Z_0)$ is such a subalgebra in $\mat_n(Z)$, and its image
in $S$ under some isomorphism is such a subalgebra in $S$.
In this example, $S_0$ depends on the choice of a particular
isomorphism $S\iso\mat_n(Z)$.

Even worse, there may be different non-isomorphic algebras $S_0$
of this type. For example, in $S=\mat_2(\compl)$ with
$\compl^H=\reals$ on can choose
\[ S_0 = \mat_2(\reals)
   \quad \text{or}\quad
   S_0 = \left\{
           \begin{pmatrix}
             a & b \\
            -\bar{b} & \bar{a}
           \end{pmatrix}
           \bigm| a, b \in \compl
         \right\}.\]
The following general result is clear in view of
Lemma~\ref{l:central}.
\begin{lemma}\label{l:s0galois}
  Let $Z_0\leq Z$ be a Galois extension of fields, and let
   $S$ be a  central simple algebra with $\Z(S)=Z$.
   Suppose $S_0\leq S$ with $\Z(S_0)=Z_0$ and
   $S=S_0 Z$.
   Then $S\iso S_0\tensor_{Z_0}Z$ and
   $\Gal(Z/Z_0)$ acts on $S$ via
   $(sz)^{\gamma}= s z^{\gamma}$.
\end{lemma}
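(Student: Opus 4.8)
The plan is to deduce the statement from Lemma~\ref{l:central}, applied over the field $Z_0$ to the subalgebra $S_0$ of the $Z_0$-algebra $S$; the three things I would check are that $S_0$ is central simple over $Z_0$, that $\C_S(S_0)=Z$, and that the resulting $\Gal(Z/Z_0)$-action on $S$ is the stated one.

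For the centralizer: $Z=\Z(S)$ is central in $S$, so $Z\subseteq\C_S(S_0)$; conversely any $c\in S$ centralizing $S_0$ automatically centralizes $Z$, hence centralizes $S_0Z=S$, so $c\in\Z(S)=Z$. Thus $\C_S(S_0)=Z$, and Lemma~\ref{l:central} then gives the canonical isomorphism $\mu\colon S_0\tensor_{Z_0}Z\iso S$, $s\tensor z\mapsto sz$ (note that $S_0$ and $Z$ commute elementwise, which is what makes $\mu$ an algebra map). The $\Gal(Z/Z_0)$-action is then obtained by transporting $\mathrm{id}_{S_0}\tensor\gamma$ (a $Z_0$-algebra automorphism of $S_0\tensor_{Z_0}Z$, since $\gamma$ fixes $Z_0$) along $\mu$; it sends $sz=\mu(s\tensor z)$ to $\mu(s\tensor z^{\gamma})=sz^{\gamma}$, which determines it on all of $S$ because such elements span $S$, and $\gamma\mapsto\mathrm{id}\tensor\gamma$ is a homomorphism.

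The one step that genuinely needs an argument is that $S_0$ is central simple over $Z_0$ (the lemma only posits $\Z(S_0)=Z_0$). Finite-dimensionality of $S_0$ over $Z_0$ is immediate, since $S$ is finite-dimensional over $Z$ and $Z$ over $Z_0$. For simplicity I would look at the Jacobson radical $N=J(S_0)$: since $Z$ is central in $S$, the set $NZ$ is a two-sided ideal of $S=S_0Z$ with $(NZ)^k=N^kZ$, hence nilpotent, so $NZ=0$ by simplicity of $S$ and therefore $N=0$. Then $S_0$ is semisimple, $S_0=\bigoplus_{i=1}^r B_i$ with each $B_i$ simple, and $Z_0=\Z(S_0)=\bigoplus_i\Z(B_i)$; since a field is indecomposable as a ring, $r=1$ and $S_0$ is simple with center $Z_0$. (Alternatively, one simply adds ``$S_0$ central simple over $Z_0$'' to the hypotheses, which holds in every case one wants to apply this to.) This simplicity of $S_0$ is the only mild obstacle; once it is in hand, the rest is a direct citation of Lemma~\ref{l:central}.
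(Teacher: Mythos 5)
Your proposal is correct and follows exactly the route the paper intends: the paper offers no argument beyond declaring the lemma ``clear in view of Lemma~\ref{l:central}'', and your proof is precisely that application (over $Z_0$, to $S_0\leq S$ with $\C_S(S_0)=Z$), with the action transported through the resulting isomorphism. Your only added content, the verification that $S_0$ is in fact simple (via nilpotency of $J(S_0)Z$ in $S$ and indecomposability of the field $Z_0$), is a correct filling-in of a detail the paper leaves tacit, so there is nothing to object to.
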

There is a converse: If
$\Gal(Z/Z_0)$ acts on $S$ such that the action extends the natural
action of $\Gal(Z/Z_0)$ on $Z=\Z(S)$, then
$S_0=\C_S(\Gal(Z/Z_0))$ is central simple with center $Z_0$ and
$S=S_0 Z$~\citep[Lemma~1.2]{hochs50}.
We will not need this, however.

We return to the situation of Hypothesis~\ref{h:bconf2}.
Note that, if $S_0$ is given, we have two actions of $H$ on $S$:
The action by conjugation, and an action $\eps$ defined
by $(s_0 z)^{\eps(h)} = s_0 z^h$.
The second action has kernel $H_{\phi}$, since
the map $H\to \Gal(Z/Z_0)$ has kernel $H_{\phi}$.

We have now collected all the ideas necessary to generalize
the results of Sections~\ref{sec:magic} and~\ref{sec:corr}
to the situation of Hypothesis~\ref{h:bconf2}.
In particular, the reader will see that $i$ and $S$ are the
correct
objects to work with (and not the idempotent
$e_{ (\theta, \crp{F}) }e_{ (\phi, \crp{F}) }$, for example).
\begin{lemma}\label{l:crcoc}
  Suppose Hypothesis~\ref{h:bconf2} and let $S_0\subseteq S$ with
  $\Z(S_0)=Z_0$ and $S=S_0Z$.
  Define $\eps\colon H\to \Aut S$ by
  $(s_0 z)^{\eps(x)} = s_0 z^x$.
  For any $x\in H/L$ there is $\sigma(x)\in S$ such that
  for every $s\in S_0$ we have
  $s^x = s^{\sigma(x)}$.
  For $x,y \in H/L$ we have
  \[ \sigma(x)^{\eps(y)}\sigma(y) = \alpha(x,y)\sigma(xy)
     \text{ for some } \alpha(x,y)\in Z^{*},\]
  and $\alpha \in Z^2(H/L, Z^{*})$.
\end{lemma}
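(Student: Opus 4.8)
The plan is to mimic the construction in Lemma~\ref{l:projrep}, but working with the twisted algebra $S_0\tensor_{Z_0}Z\iso S$ so that the Galois action on $Z$ is tracked correctly. First I would fix, for each coset $x=Lg\in H/L$ with $g\in H$, the conjugation automorphism $s\mapsto s^g$ of $S$. Restricting this automorphism to the central simple $Z_0$-algebra $S_0$ need not land in $S_0$; rather, since conjugation by $g$ acts on $Z=\Z(S)$ as the Galois automorphism $\gamma_g$ (by Lemma~\ref{l:fnsg}\ref{i:centchgalois} and Lemma~\ref{l:centerisos}), the composite $s\mapsto (s^g)^{\eps(g)^{-1}}$, i.e. first conjugate by $g$ then untwist the central part, is a $Z$-algebra automorphism of $S$. (One checks it is $Z$-linear: for $z\in Z$ and $s\in S_0$, conjugation sends $zs$ to $z^{\gamma_g}s^g$, and $\eps(g)^{-1}$ returns the central factor to $z$.) By the Skolem--Noether theorem over the field $Z$, this $Z$-automorphism of the central simple $Z$-algebra $S$ is inner, so there is $\sigma(x)\in S^{*}$, unique up to a factor in $Z^{*}=\Z(S)^{*}$, with $s^{\sigma(x)}=(s^g)^{\eps(g)^{-1}}$ for all $s\in S$; equivalently $s^g=s^{\sigma(x)\eps(g)}$, and in particular $s^x=s^{\sigma(x)}$ for $s\in S_0$ as required. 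One must check this is well-defined on cosets, which holds because $L$ centralises $i\crp{F}Ki\supseteq S$, so replacing $g$ by $lg$ with $l\in L$ changes nothing.

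Next I would derive the cocycle identity. For $x=Lg$, $y=Lh$ we have, for all $s\in S$,
\[
s^{\sigma(x)^{\eps(y)}\sigma(y)}
   = \bigl(s^{\sigma(x)^{\eps(y)}}\bigr)^{\sigma(y)}
   = \bigl((s^{\eps(y)^{-1}})^{\sigma(x)}\bigr)^{\eps(y)\cdot\text{?}}
\]
— more cleanly, I would compute both $s^{\sigma(xy)}$ and $s^{\sigma(x)^{\eps(y)}\sigma(y)}$ as automorphisms of $S$ and show they agree. Using $s^{g}=s^{\sigma(x)\eps(g)}$ and the fact that $\eps$ is a genuine homomorphism $H\to\Aut S$ with $\eps(g)$ acting on $S_0$ trivially and on $Z$ by $\gamma_g$, one gets $s^{gh}=s^{\sigma(x)\eps(g)\sigma(y)\eps(h)}$. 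On the other hand $s^{gh}=s^{\sigma(xy)\eps(gh)}=s^{\sigma(xy)\eps(g)\eps(h)}$. Comparing, $\sigma(x)\eps(g)\sigma(y)\eps(g)^{-1}$ and $\sigma(xy)$ induce the same inner automorphism of $S$; but $\eps(g)\sigma(y)\eps(g)^{-1}=\sigma(y)^{\eps(g)}$ by definition of how $\eps$ acts (it fixes $S_0$ and twists $Z$, and $\sigma(y)=s_{0}z$ decomposes accordingly), so $\sigma(x)\sigma(y)^{\eps(g)}$ and $\sigma(xy)$ differ by a central unit, i.e. by an element $\alpha(x,y)\in Z^{*}$. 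This gives the displayed relation (after renaming; note the statement writes $\sigma(x)^{\eps(y)}\sigma(y)$, which matches once one is careful that $\eps$ is being applied on the correct side — a bookkeeping point I would settle explicitly).

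Finally I would verify the $2$-cocycle condition. Associativity of multiplication in $S$ forces a relation among the $\alpha(x,y)$: evaluating $\sigma(x)^{\eps(yz)}\sigma(y)^{\eps(z)}\sigma(z)$ in two ways and using the defining relation twice yields
\[
\alpha(x,y)^{\eps(z)}\,\alpha(xy,z)=\alpha(y,z)\,\alpha(x,yz),
\]
which (since $\eps$ induces on $Z$ exactly the $\Gal(Z/Z_0)$-action, and $H/L$ acts on $Z^{*}$ through this Galois group) is precisely the statement $\alpha\in Z^{2}(H/L,Z^{*})$ with the appropriate (non-trivial) module structure on $Z^{*}$. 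One also checks $\alpha$ is constant on cosets of $L$, again from $L$ centralising $S$, so it descends to $H/L$.

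The main obstacle is not any single computation but getting the bookkeeping of the two commuting actions straight: conjugation by $H$ and the ``untwisting'' action $\eps$ both act on $S$, they agree on $Z$ up to the Galois action and are related on $S_0$, and $\sigma$ interpolates between them. The one genuinely substantive point to nail down is that $s\mapsto(s^{g})^{\eps(g)^{-1}}$ really is $Z$-\emph{linear} (so that Skolem--Noether over $Z$ applies) — this is exactly where Lemma~\ref{l:fnsg}\ref{i:centchgalois} is used, identifying the action of conjugation by $g$ on $Z=\Z(S)\iso\crp{F}(\theta)$ with the Galois automorphism $\gamma_g$. Once that is in place, the cocycle and $2$-cocycle identities are forced by associativity and are routine, modulo tracking which side each $\eps$ acts on.
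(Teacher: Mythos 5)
Your overall strategy is the paper's: compensate the conjugation action of $x$ by the untwisting automorphism $\eps(x)$ so as to obtain a $Z$\nbd linear automorphism of the central simple $Z$\nbd algebra $S$, apply Skolem--Noether over $Z$, and then extract the cocycle relation and the $2$\nbd cocycle identity from composing these automorphisms and using $\C_S(S_0)=Z$. But the point you defer as ``bookkeeping to settle explicitly'' is exactly where the argument, as written, fails, and it is the defining property the lemma asserts. You compose in the wrong order: from $s^{\sigma(x)}=(s^{g})^{\eps(g)^{-1}}$ for all $s\in S$ you conclude ``in particular $s^{x}=s^{\sigma(x)}$ for $s\in S_0$''. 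This is a non sequitur: for $s_0\in S_0$ your relation gives $s_0^{\sigma(x)}=(s_0^{x})^{\eps(x)^{-1}}$, and $s_0^{x}$ is a general element of $S$, not of $S_0$, so it need not be fixed by $\eps(x)^{-1}$. Already in the model case $S=\mat_2(\compl)$, $Z=\compl$, $Z_0=\reals$, $S_0=\mat_2(\reals)$, with $x$ acting as entrywise conjugation followed by conjugation with $a\in S^{*}$, your recipe produces $\sigma(x)\in\compl^{*}\bar a$, whereas the element the lemma requires lies in $\compl^{*}a$ (because $\C_S(S_0)=\compl$); these induce different conjugations on $S_0$ unless $\bar a$ is a scalar multiple of $a$. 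The same left/right slip propagates: with your $\sigma$, the relation forced by comparing $s^{gh}$ two ways is $\sigma(x)\,\sigma(y)^{\eps(x)^{-1}}\in Z^{*}\sigma(xy)$ (note $\eps(g)\,(\text{conjugation by }\sigma(y))\,\eps(g)^{-1}$ is conjugation by $\sigma(y)^{\eps(g)^{-1}}$, not by $\sigma(y)^{\eps(g)}$), which is not the displayed relation $\sigma(x)^{\eps(y)}\sigma(y)=\alpha(x,y)\sigma(xy)$ of the statement.

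The fix is small and is what the paper does: either untwist \emph{first} and then conjugate, i.e.\ use $s\mapsto(s^{\eps(x)^{-1}})^{x}$, or avoid the composition altogether by defining the candidate automorphism directly on $S=S_0Z\iso S_0\tensor_{Z_0}Z$ by $s_0z\mapsto s_0^{x}z$; this is well defined because conjugation by $x$ fixes $Z_0=Z^{H}$ pointwise, so $s_0\mapsto s_0^{x}$ is a $Z_0$\nbd algebra embedding of $S_0$ into $S$, and it is visibly $Z$\nbd linear and restricts on $S_0$ to $s_0\mapsto s_0^{x}$ on the nose. Skolem--Noether then gives $\sigma(x)$ with $s_0^{\sigma(x)}=s_0^{x}$, whence $(s_0z)^{x}=(s_0z)^{\eps(x)\sigma(x)}$, and the computation $s_0^{\sigma(xy)}=(s_0^{\sigma(x)})^{\eps(y)\sigma(y)}=s_0^{\sigma(x)^{\eps(y)}\sigma(y)}$ together with $\C_S(S_0)=Z$ yields $\alpha(x,y)\in Z^{*}$ in the stated form; the $2$\nbd cocycle identity and the passage to $H/L$ (using that $L$ acts trivially on $S$) then go through exactly as you outline.
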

We emphasize that $S_0$ need not be invariant under $H$.
\begin{proof}
  Let $x\in H$.
  The map $s_0 z\mapsto s_0^x z$ (for $s_0\in S_0$ and $z\in Z$)
  yields a well defined $Z$\nbd algebra automorphism of $S$.
  By the
  Skolem-Noether theorem it is inner.
  This means that there is $\sigma(x)\in S^*$ such that
  ${s_0}^{x} = {s_0}^{\sigma(x)}$ for all $s_0\in S_0$.
  Since $L$ acts trivially on $S$, we may choose a map
  $\sigma\colon H/L \to S^{*}$ such that
  $s_0^h = s_0^{\sigma(Lh)}$ for all $s_0\in S_0$.

  Note that then for $s_0\in S_0$ and $z\in Z$, we have
  \[ (s_0z)^x = s_0^x z^x = s_0^{\sigma(x)}z^x
    =( s_0 z^x)^{\sigma(x)} = (s_0z)^{\eps(x)\sigma(x)}.\]
  Thus
  \[ s_0^{\sigma(xy)} = (s_0^x)^y
                      = (s_0^{\sigma(x)})^y
                      = (s_0^{\sigma(x)})^{\eps(y)\sigma(y)}
                      = s_0^{\sigma(x)^{\eps(y)}\sigma(y)}.\]
  Since $\C_S(S_0)=Z$, it follows that
  $\sigma(x)^{\eps(y)}\sigma(y) =\alpha(x,y)\sigma(xy)$
  for some $\alpha(x,y)\in Z^{*}$.
  From
  \[ \Big( \sigma(x)^{\eps(y)} \sigma(y)
     \Big)^{\eps(z)} \sigma(z)
     =
     \sigma(x)^{\eps(yz)}
     \Big( \sigma(y)^{\eps(z)} \sigma(z)
     \Big)
  \]
  it follows that
  \[\alpha(x,y)^z \alpha(xy,z)
    =
    \alpha(x,yz)\alpha(y,z).
  \]
  Thus $\alpha\in Z^2(H/L, Z^*)$.
\end{proof}
We may call $\sigma\colon H/L \to S$ a ``crossed projective
representation''. If we want to be more precise,
we speak of an
$\eps$\nbd crossed projective representation
or a projective representation which is crossed with respect to
$S_0$.
Note that $S_0 = \C_S(\eps(H))$, so that
$S_0$ is determined by $\eps$.
Conversely, it is clear that $\eps$ is determined by $S_0$.

If $S_0$ is fixed, then $\sigma(x)$ is unique up to multiplication
with elements of $Z$, and thus the image of
$\alpha$ in $H^2(H/L,Z^{*})$ is independent of the particular
choice of $\sigma$.
  The image of $\alpha$ in $H^2(H/L, Z^{*})$ \emph{does} depend on
  the choice of $S_0$, however.
On the positive side, we have:
\begin{lemma}\label{l:isoclass_s0}
  The class of $\alpha$ in $H^2(H/L,Z^{*})$ depends only on
  the isomorphism class of $S_0$.
\end{lemma}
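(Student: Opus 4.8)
The claim is that if $S_0\le S$ and $S_0'\le S$ are two subalgebras with centre $Z_0$ satisfying $S=S_0Z=S_0'Z$, and $S_0\iso S_0'$ as $Z_0$-algebras, then the cocycles $\alpha$ and $\alpha'$ obtained from Lemma~\ref{l:crcoc} (using $S_0$ and $S_0'$ respectively) are cohomologous in $H^2(H/L,Z^*)$. The plan is to transport an abstract isomorphism $f\colon S_0\to S_0'$ to an \emph{inner} automorphism of $S$ and then track the effect on the $\sigma$'s. First I would extend $f$ to a $Z$-algebra automorphism $\widehat f\colon S\to S$ via $\widehat f(s_0z)=f(s_0)z$ for $s_0\in S_0$, $z\in Z$; this is well defined because $S\iso S_0\tensor_{Z_0}Z$ and $S'_0\iso S'_0\tensor_{Z_0}Z$ canonically (Lemma~\ref{l:central}, in the form of Lemma~\ref{l:s0galois}). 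Since $S$ is central simple over $Z$, the Skolem--Noether theorem gives $t\in S^*$ with $\widehat f(s)=s^{t}$ for all $s\in S$, i.e. $\widehat f$ is conjugation by $t$.

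\textbf{Key steps.} Fix $\sigma\colon H/L\to S^*$ adapted to $S_0$ as in Lemma~\ref{l:crcoc}, with cocycle $\alpha$. I claim $\sigma'(x):=\widehat f(\sigma(x))=\sigma(x)^{t}$ is a choice adapted to $S_0'$, with cocycle cohomologous to $\alpha$. To check the first point: for $s_0'\in S_0'$ write $s_0'=\widehat f(s_0)$ with $s_0\in S_0$; then ${s_0'}^{x}=\widehat f(s_0)^{x}$, and since $x$ acts on $S$ as $\eps(x)$-twisted conjugation by $\sigma(x)$ while $\widehat f$ is a $Z$-algebra map, one computes ${s_0'}^{x}=\widehat f({s_0}^{\sigma(x)\,\text{(twisted)}})$; carefully unwinding the $\eps$-twist — here is the point where one must note that $\eps$ is defined \emph{intrinsically} from $S_0$ resp.\ $S_0'$, and $\widehat f$ intertwines the two $\eps$'s because $f$ is a $Z_0$-algebra map and hence commutes with the $\Gal(Z/Z_0)$-action in the appropriate sense — yields ${s_0'}^{x}={s_0'}^{\sigma'(x)}$, so $\sigma'$ is indeed adapted to $S_0'$. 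For the cocycle: apply $\widehat f$ to $\sigma(x)^{\eps(y)}\sigma(y)=\alpha(x,y)\sigma(xy)$. Since $\alpha(x,y)\in Z$ and $\widehat f$ fixes $Z$ pointwise, and $\widehat f$ commutes with $\eps(y)$ as just discussed, the right-hand side becomes $\alpha(x,y)\sigma'(xy)$ while the left becomes $\sigma'(x)^{\eps(y)}\sigma'(y)$. Hence $\sigma'$ yields \emph{exactly} the cocycle $\alpha$, not merely a cohomologous one. Finally, by the uniqueness clause in Lemma~\ref{l:crcoc} (any two choices adapted to the same $S_0'$ differ by a map $H/L\to Z^*$, and hence give cohomologous cocycles), the cocycle produced from $S_0'$ by an arbitrary adapted choice is cohomologous to $\alpha$; combined with $\alpha'$ being the class of that cocycle, we get $[\alpha]=[\alpha']$ in $H^2(H/L,Z^*)$.

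\textbf{Main obstacle.} The delicate step is verifying that $\widehat f$ intertwines the two actions $\eps$ and $\eps'$, i.e.\ $\widehat f\circ\eps(x)=\eps'(x)\circ\widehat f$ on $S$. This is where the hypothesis that $f$ is a $Z_0$-algebra isomorphism (not merely an $\mathbb{F}$- or ring isomorphism) is essential: $\eps(x)$ is characterized as the $Z$-semilinear-over-$Z_0$ automorphism of $S$ that is the identity on $S_0$ and acts as $z\mapsto z^x$ on $Z$, and $\eps'(x)$ is the analogous map relative to $S_0'$; since $\widehat f$ restricts to $f$ on $S_0$, to the identity on $Z$, and is $Z$-linear, a direct check on the generating set $S_0'\cdot Z$ shows the two composites agree. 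I would present this verification explicitly, as it is the only non-formal ingredient; everything else is bookkeeping with Skolem--Noether and the cocycle identity.
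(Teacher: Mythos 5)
Your strategy is the same as the paper's up to the decisive formula: reduce the abstract isomorphism $f\colon S_0\to S_0'$ to conjugation by a unit $t\in S^{*}$ via Skolem--Noether and then transport $\sigma$. The intertwining you single out, $\widehat f\circ\eps(x)=\eps'(x)\circ\widehat f$, is indeed correct, and applying $\widehat f$ to the relation $\sigma(x)^{\eps(y)}\sigma(y)=\alpha(x,y)\sigma(xy)$ does give $\sigma'(x)^{\eps'(y)}\sigma'(y)=\alpha(x,y)\sigma'(xy)$ for $\sigma'(x)=\sigma(x)^{t}$. The gap is the adaptedness claim hidden in ``carefully unwinding the $\eps$-twist \dots\ yields ${s_0'}^{x}={s_0'}^{\sigma'(x)}$'': this is false in general. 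The action of $x$ on $S$ is the fixed conjugation by a representative $h\in H$ inside $i\crp{F}Ki$, and $\widehat f$ (conjugation by $t$) need not commute with it. Concretely, for $s_0'=s_0^{t}\in S_0'$ one has $(s_0')^{x}=(s_0^{x})^{t^{x}}$, while $(s_0')^{\sigma'(x)}=(s_0^{\sigma(x)})^{t}=(s_0^{x})^{t}$; these coincide for all $s_0\in S_0$ only if $t^{-1}t^{x}\in Z$ (the centralizer in $S$ of a conjugate of $S_0$ is $Z$), i.e.\ only if $t^{x}\in Zt$, for which there is no reason. Already in the untwisted case $Z=Z_0$, $S_0=S$, $\eps=1$, the identity $(s)^{x}=(s)^{\sigma(x)^{t}}$ for all $s\in S$ would force $t^{\sigma(x)}\in Zt$, which fails for generic noncentral $t$. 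What your computation actually proves is $\widehat f(s^{x})=\widehat f(s)^{\eps'(x)\sigma'(x)}$, i.e.\ $\sigma'$ implements the $\widehat f$-transport of the $H$-action on $S$, which is in general a different automorphism than the $H$-action itself; so your cocycle identity for $\sigma'$ does not compute the class attached to $S_0'$.

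The repair is exactly the correction factor the paper builds in: with $S_0'=S_0^{t}$ one must take
\[ \tau(x)\;=\;t^{-1}t^{\eps(x)}\sigma(x)\;=\;t^{-1}\sigma(x)\,t^{x}, \]
not $t^{-1}\sigma(x)t$. Then $(s_0^{t})^{\tau(x)}=(s_0^{t})^{\eps(x)\sigma(x)}=(s_0^{t})^{x}$, so $\tau$ is adapted to $S_0'$, and writing $\delta(y)$ for the analogue of $\eps(y)$ relative to $S_0'$ (so $s^{\delta(y)}=s^{t^{-1}\eps(y)t}$) a short computation gives $\tau(x)^{\delta(y)}\tau(y)=\alpha(x,y)\tau(xy)$; combined with the already established fact that, for fixed $S_0'$, the class in $H^2(H/L,Z^{*})$ does not depend on the adapted map chosen, this yields the lemma. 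So your conclusion and most of the bookkeeping are right, but as written the proof has a genuine gap at the adaptedness of $\sigma'$, and the missing ingredient is precisely the factor $t^{-1}t^{\eps(x)}$ (equivalently, $t^{x}$ in place of $t$ on the right).
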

\begin{proof}
  Suppose $S_0$ and $T_0$ are isomorphic. Since
  $S\iso S_0\tensor_{Z_0} Z \iso T_0\tensor_{Z_0} Z$, there is
  (by the Skolem-Noether theorem)
  $u\in S^{*}$ such that $T_0 = (S_0)^u$.

  For $h\in H$, define $\delta(h)\in \Aut S$ by
  $(t_0 z)^{\delta(h)}= t_0 z^h$ for $t_0\in T_0$ and $z\in Z$.
  Observe that
  $s^{\delta(h)}= s^{u^{-1}\eps(h)u}$.

  Now for $x\in H/L$, set
  \[ \tau(x) = [u,\eps(x)]\sigma(x)= u^{-1}u^{\eps(x)}\sigma(x).\]
  Then for $t_0=s_0^u\in T_0$ we have
  \[t_0^{\tau(x)}= s_0^{u u^{-1}u^{\eps(x)}\sigma(x)}
      = (s_0^u)^{\eps(x)\sigma(x)}
      = (s_0^u)^{x} = t_0^x
  \]
  and
  \begin{align*}
    \tau(x)^{\delta(y)}\tau(y)
     &= \left( u^{-1}u^{\eps(x)}\sigma(x)
        \right)^{u^{-1}\eps(y)u}
        \:
        u^{-1}u^{\eps(y)}\sigma(y)
     \\
     &= u^{-1} u^{\eps(xy)}\sigma(x)^{\eps(y)}\sigma(y)
      =  \alpha(x,y) \tau(xy).
  \end{align*}
  Thus $\sigma$ and $\tau$ define the same cocycle.
\end{proof}
\begin{defi}
  In the situation of Lemma~\ref{l:crcoc},
  we call $\sigma\colon H/L \to S$ a
  \emph{magic} ($\eps$-) crossed
  representation for the configuration of Hypothesis~\ref{h:bconf2}
  (with respect to $S_0$),
  if
  \begin{enums}
  \item $  \sigma(x)^{\eps(y)}\sigma(y) = \sigma(xy)$ for all
       $x,y \in H/L$ and
  \item $s^x = s^{\eps(x)\sigma(x)}$ for all $x\in H/L $ and
        $s\in S$.
  \end{enums}
\end{defi}
It is possible that for one choice of $S_0$, there exists a magic
crossed representation, while for another choice such a magic
crossed representation does not exist.
For more details on this question, see~\citep[2.37-2.41]{ladisch09diss}.
\begin{thm}\label{t:c-isosemi}
  Assume Hypothesis~\ref{h:bconf2}, and let
  $S_0\subseteq S$ with $S=\Z(S)S_0$ and
  $\Z(S_0)= \Z(S)^H$. Define $\eps\colon H/H_{\phi}\to \Aut S$
  by $(s_0 z)^{\eps(h)}= s_0 z^h$ for $s\in S_0$ and $z\in \Z(S)$.
  Let $\sigma\colon H/L \to S$
  be a  magic $\eps$\nbd crossed representation.
  Then the linear map
  \[ \kappa\colon \crp{F}H  \to C=\C_{i\crp{F}Gi}(S_0),\quad
     \text{defined by} \quad
     h \mapsto  h\sigma(Lh)^{-1} \text{ for } h\in H ,\]
  is an algebra-homomorphism and induces an isomorphism
  $ \crp{F}H e_{(\phi,\crp{F})} \iso C$.
\end{thm}
\begin{proof}
   Let $c_h = h \sigma(Lh)^{-1}$.
   It is easy to see that indeed $c_h\in C$.
  We compute
  \begin{align*}
    c_hc_g &= h\sigma(Lh)^{-1}g\sigma(Lg)^{-1}
            = hg \left( \sigma(Lh)^{-1}
                 \right)^{\eps(g)\sigma(Lg)
                         }
                 \sigma(Lg)^{-1}  \\
           &= hg \sigma(Lg)^{-1} \left( \sigma(Lh)^{\eps(g)}
                                 \right)^{-1}
            = hg \left( \sigma(Lh)^{\eps(g)} \sigma(Lg)
                 \right)^{-1}  \\
           &= hg \sigma(Lhg)^{-1} = c_{hg}.
  \end{align*}
  Thus $\kappa$ is an algebra homomorphism.
  From $\sigma(1)^{\eps(1)}\sigma(1)=\sigma(1)$ we see that
  $\sigma(1)= 1_S = i$, and thus
  $\kappa(l)= li$ for all $l\in L$.
  Thus
  $\kappa(\crp{F}Le_{ (\phi, \crp{F}) }) = \crp{F}Li$ and $\kappa(f)=0$ for
  every
  central primitive idempotent $f$ of $\crp{F}L$
  different from $e_{ (\phi, \crp{F}) }$.
  Now it follows as in the proof of Theorem~\ref{t:C-iso}
  that $\kappa$ induces an isomorphism from $\crp{F}He_{ (\phi, \crp{F}) }$
  onto $C$.
\end{proof}
\begin{cor}\label{c:corrsemi}
  Assume Hypothesis~\ref{h:bconf2} and let
  $\sigma\colon  H/L \to S $ be a magic
  crossed representation,
  with respect to $S_0 \subseteq S$.
  Then $i\crp{F}Gi \iso S_0 \tensor_{Z_0} \crp{F}He_{ (\phi, \crp{F}) }$.
  If $S_0 \iso \mat_n(Z_0)$, then
  $i\crp{F}Gi \iso \mat_n(\crp{F}H e_{ (\phi, \crp{F} ) })$ and
  $\crp{F}Ge_{ (\theta, \crp{F}) }$ and $\crp{F}He_{ (\phi, \crp{F}) }$ are
  Morita equivalent.
\end{cor}
\begin{proof}
  All assertions follow from the first.
  Let $C= \C_{i\crp{F}Gi}(S_0)$. Then by Lemma~\ref{l:central}
  we have $i\crp{F}Gi \iso S_0 \tensor_{Z_0} C$
  (Remember that $Z_0\subseteq\Z(i\crp{F}Gi)$).
  By Theorem~\ref{t:c-isosemi},
  $C\iso\crp{F}He_{(\phi, \crp{F})}$.
  The result follows.
\end{proof}
Suppose $\crp{F}\leq \compl$. Let us write
$\Irr(G\mid e_{(\theta,\crp{F})})$ for the set of all irreducible
characters $\chi\in \Irr G$ such that
$\chi(e_{(\theta,\crp{F})})\neq 0$.
(This notation could be used for arbitrary idempotents $e\in \compl G$.)
Of course,
\[\Irr(G\mid e_{(\theta, \crp{F})})
  = \bigcup_{ \alpha \in \Gal( \crp{F}(\theta) / \crp{F} )
             }
             \Irr(G\mid \theta^{\alpha}).
\]
\begin{thm}\label{t:semicorrchars}
  Assume Hypothesis~\ref{h:bconf2} with $\crp{F}\leq
  \compl$.
  Every magic crossed representation $\sigma$ defines
  linear
  isometries
  \[
    \iota=\iota(\sigma)\colon
   \compl[\Irr(U\mid  e_{(\theta,\crp{F})})]
   \to
   \compl[\Irr(U\cap H\mid e_{(\phi, \crp{F})})]
   \quad
    (K\leq U\leq G)
  \]
  with Properties~\ref{i:c_irr}--\ref{i:c_brauer} from
  Theorem~\ref{t:corr}
  (where $S$ has to be replaced by $S_0$
   in Property~\ref{i:c_brauer}).
\end{thm}
\begin{proof}
  Let us first assume that $\crp{F}(\phi)^H= \crp{F}$.
  Then $Z_0 = \crp{F}1_S= \crp{F}i$ and
  $i\crp{F}G i \iso S_0 \tensor_{\crp{F}} C$,
  where $C =\C_{i\crp{F}Gi}(S_0)$.
  The magic crossed representation $\sigma$ defines an isomorphism
  $\crp{F}H e_{ (\phi, \crp{F}) } \to C$.
  By scalar extension, we get an isomorphism
  $\compl H e_{ (\phi, \crp{F}) }\to \compl \tensor_{\crp{F}}C$.
  Note that
  $\compl \tensor_{\crp{F}}C =
    \C_{ \compl \tensor_{\crp{F}} (i\crp{F}G i) }
      (\compl \tensor_{\crp{F}}S_0)$.
  Since $\compl \tensor_{\crp{F}}S_0$ is central simple, we get
  isomorphisms
   \begin{multline*}
     \ZF(\compl G e_{ (\theta,\crp{F}) },\compl)
       \xrightarrow[\text{(\ref{l:fullidem})}]{\text{Res}}
       \ZF(i\compl G i, \compl)
       \\
       \xrightarrow[\text{(\ref{l:cforms})}]{\eps}
       \ZF(\compl \tensor_{\crp{F}}C, \compl)
      \xrightarrow[\text{(\ref{t:c-isosemi})}]{\kappa^{*}}
     \ZF(\compl H e_{ (\phi, \crp{F}) }, \compl).
  \end{multline*}
  As before, for $\chi\in \compl[\Irr(G\mid e_{ (\theta, \crp{F}) })]$
  we have
  $\chi^{\iota}(h) = \chi( s_0 \sigma(Lh)^{-1}h)$, where
  $s_0\in S_0$ is some element with
  $\tr_{S_0/\crp{F}}(s_0)=1$.
  The rest of the proof of Theorem~\ref{t:corr} now carries over verbatim.

  Now drop the assumption that
  $\crp{F}(\phi)^H= \crp{F}$ and set
  $\crp{E}_0= \crp{F}(\phi)^H$.
  (Thus $\crp{E}_0\iso Z_0=\Z(S)^H$.)
  Then
  \[\crp{E}_0=\crp{F}(\sum_{ g\in [G:G_{\theta}]}\theta^g)
          =\crp{F}(\sum_{ h\in [H:H_{\phi}]  } \phi^h),
  \]
  and $\crp{F}(\chi)$ contains $\crp{E}_0$ for any
  $\chi\in \Irr(G\mid e_{ (\theta, \crp{F}) })
    \cup\Irr(H\mid e_{ (\phi, \crp{F}) })$.
  Also $\phi$ and $\theta$ remain semi-invariant over $\crp{E}_0$.

  Observe that
  \[ e_{(\theta,\crp{F})}= \sum_{ \alpha\in \Gal(\crp{E}_0/\crp{F}) }
                               e_{(\theta, \crp{E}_0)}^{\alpha}
    \quad \text{and}\quad
     e_{(\phi, \crp{F})} = \sum_{ \alpha\in \Gal(\crp{E}_0/\crp{F}) }
                               e_{ (\phi, \crp{E}_0)}^{\alpha}.\]
   All the idempotents $e_{(\theta, \crp{E}_0)}^{\alpha}$
   and
   $e_{ (\phi, \crp{E}_0)}^{\alpha}$ are invariant in $H$.
   Set
   \[ j = \T_{\crp{E}_0}^{\crp{F}(\theta)}(e_{\phi}e_{\theta})
        = \sum_{\alpha\in \Gal(\crp{F}(\theta)/\crp{E}_0)}
        (e_{\phi}e_{\theta})^{\alpha}.
   \]
   Then $j$ is invariant in $H$ and
   $i= \T_{\crp{F}}^{\crp{E}_0}(j)$.
   The following diagram is commutative:
   \[ \begin{CD}
       \crp{F}He_{(\phi, \crp{F})}
       @>{{}\cdot i}>>
       i\crp{F}G i
       @>\subseteq >>
       \crp{F}Ge_{(\theta,\crp{F})}
       \\
       @V{{}\cdot e_{(\phi,\crp{E}_0)} }VV
       @V{}\cdot j VV
       @V{{}\cdot e_{(\theta,\crp{E}_0)}}VV
       \\
       \crp{E}_0He_{(\phi, \crp{E}_0)}
       @>{}\cdot j >>
       j\crp{E}_0G j
       @>\subseteq >>
       \crp{E}_0Ge_{(\theta, \crp{E}_0)}
   \end{CD}\]
   The vertical maps are isomorphisms
   (by a generalization of~Lemma~\ref{l:fe_iso}).
   It follows that the map
   $H/L\ni x\mapsto \tau(x)=\sigma(x)j$ is a crossed magic representation
   $\tau\colon H/L \to Sj = (j\crp{E}_0K j)^L$
   with respect to $S_0j$.

  Let $\alpha\in \Gal(\crp{F}(\theta)/\crp{F})$.
  Then $\tau^{\alpha}$ defined by
  $x\mapsto \tau(x)^{\alpha} = \sigma(x)j^{\alpha}$ is a magic
  crossed representation for the configuration of
  $\theta^{\alpha}$ and $\phi^{\alpha}$.
  This follows from the fact that $\tau$ is magic, or
  from the above argument with $j^{\alpha}$,
  $e_{(\phi^{\alpha},\crp{E}_0)}$ and $e_{(\theta^{\alpha},\crp{E}_0)}$  instead
  of $j$, $e_{(\phi, \crp{E}_0)}$ and $e_{(\theta, \crp{E}_0)}$.

    By the first part of the proof, the maps
    $\tau^{\alpha}$ ($\alpha\in \Gal(\crp{F}(\theta)/\crp{F})$)
    determine isometries $\iota(\tau^{\alpha})$
  between $\compl[\Irr(G\mid e_{ (\theta^{\alpha},\crp{E}_0) })]$
  and $\compl[\Irr(H\mid e_{ (\phi^{\alpha}, \crp{E}_0) })]$ commuting
  with field automorphisms over $\crp{E}_0$.

  Choose
  $\alpha_i\in \Gal(\crp{F}(\theta)/\crp{F})$
  with
  \[ \Gal(\crp{F}(\theta)/\crp{F})
     = \bigdcup_{i=1}^{\abs{\crp{E}_0:\crp{F}}}
       \Gal(\crp{F}(\theta)/\crp{E}_0) \alpha_i.\]
  Thus
  \[ \Gal(\crp{E}_0/\crp{F})
     = \left\{
         {\alpha_i}_{|\crp{E}_0}
         \mid
         i = 1, \dotsc, \abs{\crp{E}_0:\crp{F}}
       \right\}\]
  and
  \[  \compl[\Irr(G\mid e_{(\theta, \crp{F})})]
       = \bigoplus_{i}
         \compl[\Irr(G\mid e_{(\theta^{\alpha_i},\crp{E}_0)})].\]
  We define
  \[\iota(\sigma)
     = \bigoplus_{i}\iota(\tau^{\alpha_i})\colon
       \compl[\Irr(G\mid e_{(\theta, \crp{F})})]
       \to \compl[\Irr(H\mid e_{(\phi, \crp{F})})].
     \]
   If $\alpha_{|\crp{E}_0}=\beta_{|\crp{E}_0}$, then
  $\tau^{\alpha}=\tau^{\beta}$ and
  $\iota(\tau^{\alpha})= \iota(\tau^{\beta})$.
  Thus $\iota(\sigma)$ is independent of the choice of the
  $\alpha_i$.
  It is clear that $\iota(\sigma)$ commutes with field
  automorphisms fixing $\crp{F}$.
  The isometries $\iota(\tau^{\alpha})$
  preserve Brauer
  equivalence classes of irreducibles characters over $\crp{E}_0$.
  Since $\crp{E}_0\subseteq \crp{F}(\chi)$ for all
  $\chi \in \Irr(G\mid e_{ (\theta, \crp{F}) })$, it follows that
  $\iota(\tau^{\alpha})$ preserves Brauer equivalence classes over
  $\crp{F}$.
  Thus $\iota(\sigma)$ has Property~\ref{i:c_brauer}.
  The other properties are clear. The proof is finished.
\end{proof}
\begin{remark}
  We explain the relation between the last result
  (respective the more special Theorem~\ref{t:corr})
  and Turull's theory of
  the Brauer-Clifford group~\citep{turull09}.
  The Brauer-Clifford group of a group $X$ and a
  commutative $X$\nbd algebra consists of equivalence classes of
  certain $X$\nbd algebras.
  In the situation of Hypothesis~\ref{h:bconf2}, there are defined
  elements $[[\theta]]$ and $[[\phi]]$ of the
  Brauer-Clifford group $\BrClif(H/L, \crp{F}(\theta))$
  of $H/L$ over $\crp{F}(\theta)$.
  The equivalence class $[[S]]$ of the
  $H/L$\nbd algebra $S$ also belongs to
  $\BrClif(H/L, \crp{F}(\theta))$, and one can show that
  $[[\theta]]= [[S]][[\phi]]$ in the Brauer-Clifford
  group~\citep[Theorem~A.35]{ladisch09diss}.
  In particular, if $[[S]]= 1$, then a result of
  Turull~\citep[Theorem~7.12]{turull09}
  yields a character bijection as in
  Theorem~\ref{t:semicorrchars}.
  One can also show that
  $[[S]]=1$ if and only if
  $S\iso\mat_n(Z)$ and a magic crossed
  representation $H/L\to S$ exists,
  which is crossed with respect to a subalgebra
  $S_0\iso \mat_n(Z_0)$.
\end{remark}

Our next goal is to exhibit the relation between the
results of this section and those of
Sections~\ref{sec:magic} and~\ref{sec:corr}.
In Section~\ref{sec:magic}, our standing assumption
(Hypothesis~\ref{h:bconf}) was that
$\phi$ and $\theta$ are invariant in $H$ and that
the field $\crp{F}$ contains the values of $\phi$ and
$\theta$.
Under the assumptions which are in force in this section
(Hypothesis~\ref{h:bconf2}) this is not the case.
However, Hypothesis~\ref{h:bconf} holds for the configuration we
get if we replace the group $G$ by $G_{\theta}$, the subgroup $H$ by
$H_{\phi}$ and the field $\crp{F}$ by $\crp{F}(\phi)$.
Thus it makes sense to ask if there is a magic representation
$H_{\phi}/L \to ( e_{\phi}e_{\theta}\crp{F}(\phi)K
e_{\phi}e_{\theta})^L$
in the sense of
  Definition~\ref{d:magic}
for the configuration $(G_{\theta}, H_{\phi}, K,L,\theta, \phi)$
over the field
   $\crp{F}(\phi)$.
\begin{prop}\label{p:semicorr_corr}
  Assume Hypothesis~\ref{h:bconf2} and let
  $\sigma\colon H/L\to S$ be a magic crossed representation.
  Set $i_{\phi}=e_{\theta}e_{\phi}$.
  Then
  \[  \sigma_{\phi}\colon H_{\phi}/L
                          \to
                          T =(i_{\phi}\crp{F}(\phi)Ki_{\phi})^L,
     \quad
     h \mapsto
     \sigma_{\phi}(h)= \sigma(h)i_{\phi}
  \]
  is a magic representation for
  the configuration $(G_{\theta},H_{\phi},K,L,\theta,\phi)$ and for
  $\chi\in \Irr(G_{\theta}\mid \theta)$ we have
  \[ (\chi^G)^{\iota(\sigma)} = (\chi^{\iota(\sigma_{\phi})})^H.\]
\end{prop}
\begin{proof}
  For $s\in S$, we have $si_{\phi} =se_{\theta}$.
  That $\sigma_{\phi}$ is a magic representation
  follows from Lemma~\ref{l:fe_iso}.

  Since $\iota(\sigma)$ commutes with induction of characters,
  it suffices to show that
  $\chi^{\iota(\sigma)}= \chi^{\iota(\sigma_{\phi})}$.
  Choose $s_0\in S_0$ with
  $\tr_{S_0/Z_0}(s_0)=1 =\tr_{S/Z}(s_0)$.
  Then $\tr_{T/\crp{F}(\theta)}(s_0i_{\phi})=1$.
  Observe that $e_{\theta}i= i_{\phi}$.
  This yields that for arbitrary $h\in H_{\phi}$
  \begin{align*}
    \chi^{\iota(\sigma)}(h)
      &= \chi(s_0 \sigma(Lh)^{-1}h)
       = \chi(e_{\theta}s_0 \sigma(Lh)^{-1}h)
       \\
      &= \chi(s_0i_{\phi} \sigma_{\phi}(Lh)^{-1}h)
       = \chi^{\iota(\sigma_{\phi})}(h)
  \end{align*}
  as claimed.
\end{proof}
This result just means that we get the correspondence
$\iota(\sigma)$ by composing the
Clifford correspondences
associated with $\theta$ and $\phi$ and a correspondence induced by
a magic representation.
Note that $\iota(\sigma)$ is determined by $\iota(\sigma_{\phi})$ and this
property.

Conversely, if the configuration
$(G_{\theta}, H_{\phi}, K, L, \theta, \phi)$
admits a magic representation $\tau$, we may
compose the correspondence $\iota(\tau)$ with the Clifford
correspondences between
$\Irr(G\mid \theta)$ and $\Irr(G_{\theta}\mid \theta)$,
respective between
$\Irr(H\mid \phi)$ and $\Irr(H_{\phi} \mid \phi)$.
We get then a correspondence between
$\Irr(G\mid \theta)$ and $\Irr(H\mid \phi)$.
But we do not get compatibility with field automorphisms and Schur
indices over the field $\crp{F}$ from this argument, and we may
have $\crp{F}(\theta)\nsubseteq \crp{F}(\chi)$ for some
$\chi\in \Irr(G\mid \theta)$.

Finally, let us show that $\iota(\sigma)$ is
independent of the particular choice of $S_0$:
\begin{remark}\label{r:corrindep}
  Assume the situation of Theorem~\ref{t:semicorrchars} and
  let $u\in S^{*}$. Set $\tau(x)= u^{-1}u^{\eps(x)}\sigma(x)$.
  Then $\tau$ is a magic crossed representation with respect to
  $T_0=(S_0)^u$, and
  $\iota(\sigma)= \iota(\tau)$.
\end{remark}
\begin{proof}
  That $\tau$ is magic follows from the proof of
  Lemma~\ref{l:isoclass_s0}.
  Observe that
  $\tau(x)= u^{-1}\sigma(x)u^x$.
  Thus
  \begin{align*}
   \chi^{\iota(\tau)}(h)
    &= \chi( h \tau(Lh)^{-1} (s_0)^u)
     = \chi( h (u^h)^{-1}\sigma(Lh)^{-1}u (s_0)^u)
     \\
    &= \chi( u^{-1} h \sigma(Lh)^{-1} s_0 u)
     = \chi( h \sigma(Lh)^{-1}s_0)
     = \chi^{\iota(\sigma)}(h).
  \end{align*}
\end{proof}
Thus if we view the isomorphism type of $S_0$ as fixed, we may
choose some $S_0$ without loss of generality.
If the subalgebra $S_0$ is given, a magic representation
$\sigma$
is unique up to multiplication with a map
$\lambda\colon H/L \to Z^*$ such that
$\lambda(x)^y\lambda(y)=\lambda(xy)$ for all
$x$, $y\in H/L$.
(In other words, $\lambda\in Z^1(H/L, Z^*)$.)
In particular, $\lambda_{H_{\phi}} \in \Lin (H_{\phi}/L)$.
It is not difficult to see that
$\chi^{\iota(\lambda\sigma)}=
 \lambda^{-1} \chi^{\iota(\sigma)}$
for $\chi\in \Irr(G \mid \theta)$.
As such a $\chi$ vanishes on
$G\setminus G_{\theta}$, we see that
$\iota(\lambda \sigma)=\iota(\sigma)$, if
$\lambda_{H_{\phi}}=1$. In particular,
$\iota(\lambda \sigma)=\iota(\sigma)$ if
$\lambda \in B^1(H/L, Z^*)$, that is, if there is
$a\in Z^*$ such that
$\lambda(x) = a^{-1}a^x$ for all $x\in H/L$.

For the last remark in this section, note that
if $\sigma$ is a magic crossed representation, then
$x\mapsto \nr_{S/Z}(\sigma(x))$ defines an  element of
$Z^1(H/L, Z^*)$, which we denote simply by $\nr (\sigma)$.
(Remember that $\nr=\nr_{S/Z}$ denotes the reduced norm
 of $S$ with respect to $Z$.)
The following  is analogous to Remark~\ref{r:corrdet}:
\begin{remark}\label{r:semicorrdet}
  Let $\pi$ be the set of  prime divisors of $n$.
  If there is any magic crossed  representation,
  then there is a  magic crossed representation $\sigma$ such that
  the class of
  $\nr (\sigma)$ in $H^1(H/L, Z^*)$  has order a $\pi$\nbd number.
\end{remark}
\begin{proof}
  Suppose $\sigma \colon H/L \to S$ is given.
  Let $\lambda= \nr (\sigma)\in Z^1(H/L, Z^*)$.
  Let $b$ be the $\pi'$\nbd part of $\ord(\lambda B^1(H/L, Z^*))$.
  As $n = \dim \sigma$ is $\pi$, there is $r\in \ints$ with
  $rn+1 \equiv 0 \mod b$.
  Then $\nr( \lambda^r \sigma) = \lambda^{rn+1}$ has $\pi$\nbd order
  modulo $B^1(H/L, Z^*)$.
\end{proof}
Note that $Z^1(H/L, Z^*)$ might be infinite. If it is finite for
some reason, we can even get that $\nr(\sigma)$ itself has order a
$\pi$\nbd number.

\section[Coprime multiplicity]{An example: coprime multiplicity}\label{sec:coprim}
We need the following result, which can be found
in papers of Dade~\citep[Theorem~4.4]{dade81b} and
Schmid~\citep[Theorem~2]{schmid85}:
\begin{prop}\label{p:dadeschmid}
  Let $Z/Z_0$ be a Galois extension with Galois group $\Gamma$ and
  $S$ a central simple $Z$\nbd algebra such that
  every $\gamma\in \Gamma$ can be extended to
  an automorphism of $S$ (as ring).
   If the Schur index of $S$ is prime to
  $\abs{\Gamma}$, then there is $S_0\leq S$ with
  $\Z(S_0)=Z_0$ and $S= S_0Z \iso S_0\tensor_{Z_0}Z$.
   If $\abs{\Gamma}$ is prime to $\dim_Z S$, then
   $S_0$ is unique up to conjugacy (with elements of $S^{*}$).
\end{prop}
This can be derived from Teichm\"{u}ller's work on
noncommutative Galois theory, as Schmid~\citep{schmid85} has
pointed out. (Teichm\"{u}ller considered simple algebras such that
 $\Aut_{Z_0}S \to \Gal(Z/Z_0)$ is surjective.
 See~\citet{eilmac48} for an exposition and related results.)

\begin{prop}\label{p:coprimfie}
  Assume Hypothesis~\ref{h:bconf2} with $(n, \abs{H/L})=1$.
  Then there is a magic crossed representation $\sigma$
  such that its reduced norm is in $B^1(H/L, Z^*)$.
  It yields a canonical correspondence $\iota(\sigma)$
  between
  $\compl[\Irr( G\mid e_{(\theta,\crp{F})})]$
  and
  $\compl[\Irr( H \mid e_{(\phi, \crp{F})})]$.
\end{prop}
\begin{proof}
  As $\Gamma=\Gal(Z/Z_0)$ is a factor group of $H/L$, it follows that
  $\abs{\Gamma}$ and $ \dim_{\crp{F}(\phi)}S$ are coprime.
  Thus by  Proposition~\ref{p:dadeschmid}, there is
  $S_0\subseteq S$ with $\Z(S_0)=Z_0$ and $S_0Z=S$,
  and $S_0$ is  unique up to inner automorphisms of $S$.
  For the moment, fix $S_0$.
  By Lemma~\ref{l:crcoc} there is an
  $\eps$-crossed projective representation with factor set
  $\alpha\in Z^2(H/L, Z^*)$, say. But as
  $n$ is coprime to $\abs{H/L}$, it follows that the cohomology
  class of
  $\alpha $ is trivial. Thus there exists a magic crossed
  representation with respect to $S_0$.

  Since $(n, \abs{H/L})=1$, there is a
  magic crossed representation
  such that its reduced norm is in $B^1(H/L, Z^*)$
  (by Remark~\ref{r:semicorrdet} and since
  the exponent of $H^1(H/L, Z^*)$ divides $\abs{H/L}$).
  In particular, for $x\in H_{\phi}/L$ we have
  $\nr(\sigma(x))=1$.
  Let $i_{\phi}= e_{\phi}e_{\theta}$ and define
  $\sigma_{\phi}(x)= \sigma(x)i_{\phi}$ for
  $x\in H_{\phi}/L$ as in Proposition~\ref{p:semicorr_corr}.
  Then $\nr{\sigma_{\phi}(x)}=1$ for $x\in H_{\phi}/L$.
  The magic representation $\sigma_{\phi}$  is uniquely
  determined by this condition,
  and the correspondence $\iota(\sigma)$ is
  determined by $\sigma_{\phi}$ (by
  Proposition~\ref{p:semicorr_corr}).
  Thus the correspondence $\iota(\sigma)$ is canonical in the
  sense that it is independent of the choice of the particular
  map $\sigma$.

  It remains to show that $\iota(\sigma)$ is independent of the
  choice of $S_0$.
  So assume that instead of
  $S_0$ we work with $S_0^u$.
  Then
  $\tau(x)= u^{-1}u^{\eps(x)}\sigma(x)$ is a magic crossed
  representation that yields the same correspondence as $\sigma$,
  by Remark~\ref{r:corrindep}.
  Since
  \[\nr (\tau(x)) = \nr(u)^{-1}\nr(u)^x \nr(\sigma(x)),
  \]
  it follows that $\nr\tau \in B^1(H/L, Z^*)$ also.
\end{proof}
\begin{cor}
  In the situation of Proposition~\ref{p:coprimfie}, there is a
  uniquely defined character $\psi$ of $H_{\phi}/L$, and
  the correspondence $\iota$ has the following property: For
  $\chi\in \Irr(G_{\theta}\mid \theta)$, we have
  \[ \left(\chi_{H_{\phi}}\right)_{\phi} = \psi \chi^{\iota}
     \quad
     \text{and}
     \quad
     \left((\chi^{\iota})^{G_{\theta}} \right)_{\theta}= \overline{\psi}\chi.\]
\end{cor}
(We do not claim that $\psi$ is defined by these equations.)
\begin{proof}
  Let $\psi$ be the magic character of the magic representation
  $x \mapsto \sigma(x)e_{\phi}$. This defines $\psi$
  unambiguously.
  The result follows from Theorem~\ref{t:corr},
  in particular~\ref{i:c_res} and~\ref{i:c_ind}.
\end{proof}
It may be worth pointing out that if we assume
Hypothesis~\ref{h:bconf} instead of the more general
Hypothesis~\ref{h:bconf2}, then Proposition~\ref{p:coprimfie} is
an immediate corollary of the results from
Sections~\ref{sec:magic} and~\ref{sec:corr}.
One only needs to observe that the cocycle associated with a magic
representation must be trivial, since $\dim S$ and $\abs{H/L}$ are
coprime.
In particular, if one doesn't care about rationality questions and
works simply over $\compl$, one can give a
rather quick and transparent proof that there is a correspondence
between $\Irr(G\mid \theta)$ and $\Irr(H\mid \phi)$, if
$(\theta_L, \phi)$ and $\abs{H/L}$ are coprime.

That the Clifford extensions associated with the characters
$\theta$ invariant in $G$ and $\phi$ invariant in $H$ are
isomorphic was already proved by Dade~\citep[0.4]{dade70b} in a
more general situation,
 but over an
algebraically closed field. Schmid~\citep{schmid88} has generalized
Dade's result to arbitrary fields, under the additional assumption
that the Schur indices of $\theta$ and $\phi$ are coprime to $\abs{H/L}$.
One the other hand, both Dade and Schmid work in the more general
context of group graded algebras.
 The
description using the magic\index{Magic representation} character
$\psi$ seems to be new, however.

 It would be nice to have a purely character
theoretic description of the correspondences $\iota$. If $\psi$
vanishes nowhere , then $\chi^{\iota}$ can be computed from the
equation $\chi_{H_{\phi}}= \psi \chi^{\iota}$. In this case, one
needs only to know $\psi$, but not $\sigma$ or a special element
$s_0\in S_0$ with $\tr(s_0)=1$, to compute $\chi^{\iota}$. This is
true for example if $G/K$ is a $p$\nbd group:
\begin{remark}
  If $x\in H_{\phi}/L$ has $p$\nbd power order,
  where $p$ is any prime, then
  $\psi(x)\neq 0$
  (in the situation of
  Proposition~\ref{p:coprimfie}).
\end{remark}
\begin{proof}
  If $\mathfrak{P}$ is a prime ideal of the ring of algebraic integers
  in $\compl$ that lies above $p$, then
  $\psi(x)-\psi(1)\in \mathfrak{P}$.
  Since $\psi(1)=n \notin p\ints$, it follows
  $\psi(x)\notin \mathfrak{P}$, and so $\psi(x)\neq 0$.
\end{proof}

The proposition and the corollary apply in particular if
$n=1$. Then $S\iso \crp{F}(\phi)$ and the canonical choice of
$\sigma$ is the trivial map $\sigma(x)=i$ for all $x$.
In particular, for
$\chi\in \Irr(G_{\theta}\mid \theta)$ we have
$(\chi_{H_{\phi}})_{\phi} = \chi^{\iota}$.
The last equation in fact defines then
the correspondence. It follows that $\chi^{\iota}$ is the unique element in
$\Irr(H_{\phi}\mid \phi)$ with $(\chi_{H_{\phi}}, \chi^{\iota})\neq 0$ and the
correspondence can also be defined by this condition
(cf.\ Proposition~\ref{p:centcorr}).
This fact is  known and  can be proved just using elementary character
theory~\citep[Lemma~4.1]{i84}.
 Theorem~\ref{t:semicorrchars} also implies that the correspondence
   preserves Schur
indices if $n=1$: Because then clearly
$S_0\iso \crp{F}(\theta)^H$ is split.
In fact, $\crp{F}He_{(\phi, \crp{F}) } \iso i\crp{F}Gi$ when $n=1$, where the
isomorphism is given by
multiplication with $i$.
Note, however, that Hypothesis~\ref{h:bconf2} still involves a
rather special hypothesis about the fields
$\crp{F}(\theta)$ and $\crp{F}(\phi)$.

\section{Induction}\label{sec:induc}
\begin{lemma}\label{l:bconf_comp}
  Let $G$ be a finite group,
  $H\leq U\leq G$
  and $K\nteq G$.
  Set $N=K\cap U$ and $L=K\cap H$.
  Let $\theta\in \Irr K$, $\eta \in \Irr N$ and $\phi\in \Irr L$.
  Assume that the configurations
  $(G,U,K, N, \theta, \eta)$ and $(U,H,N,L, \eta,\phi)$
  fulfill the conditions of Hypothesis~\ref{h:bconf2}
  for the same field $\crp{F}$.
  Then the conditions of Hypothesis~\ref{h:bconf2} also hold for the
  ``composed'' configuration
  $(G,H,K,L,\theta, \phi)$ over $\crp{F}$.
\end{lemma}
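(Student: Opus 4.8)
The plan is to check the ingredients of Hypothesis~\ref{h:bconf2} for the configuration $(G,H,K,L,\theta,\phi)$ over $\crp{F}$ one at a time, the only substantial point being condition~(iii). First I would dispose of the structural data. Since $H\leq U$ we have $L=K\cap H=(K\cap U)\cap H=N\cap H$, and $N=K\cap U\nteq U$; the first configuration gives $G=UK$ while the second gives $U=HN$, so, as $N\subseteq K$, we get $G=UK=HNK=HK$. Also $K\nteq G$ is given, and $\theta$ (respectively $\phi$) is the character of a simple projective $\alcl{\crp{F}}K$-module (respectively $\alcl{\crp{F}}L$-module) because this is part of the hypothesis on the first (respectively second) configuration. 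For condition~(ii) of Hypothesis~\ref{h:bconf2}, the first configuration gives $\crp{F}(\theta)=\crp{F}(\eta)$ and the second gives $\crp{F}(\eta)=\crp{F}(\phi)$, whence $\crp{F}(\theta)=\crp{F}(\phi)$; write $\Gamma=\Gal(\crp{F}(\theta)/\crp{F})=\Gal(\crp{F}(\eta)/\crp{F})=\Gal(\crp{F}(\phi)/\crp{F})$. For condition~(i), restriction is transitive, so $\theta_L=(\theta_N)_L$; since $\eta$ occurs in $\theta_N$ and $\phi$ occurs in $\eta_L$ with positive multiplicities, $\phi$ occurs in $\theta_L$ with multiplicity at least $(\theta_N,\eta)(\eta_L,\phi)>0$. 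In characteristic $p>0$, the statement that $\theta$ and $\phi$ have $p$-defect zero holds by the hypotheses on the two given configurations (and is in any case forced by (i)--(iii)).

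The heart of the argument is condition~(iii). Fix $h\in H$. Since $H\leq U$, condition~(iii) for the first configuration, applied to $h\in U$, yields $\gamma_1\in\Gamma$ with $\theta^{h\gamma_1}=\theta$ and $\eta^{h\gamma_1}=\eta$; condition~(iii) for the second configuration, applied to $h\in H$, yields $\gamma_2\in\Gal(\crp{F}(\phi)/\crp{F})=\Gamma$ with $\eta^{h\gamma_2}=\eta$ and $\phi^{h\gamma_2}=\phi$. Now condition~(iii) for the first configuration says in particular that $\eta$ is semi-invariant over $\crp{F}$ in $U$, so part~\ref{i:fnsggal} of Lemma~\ref{l:fnsg} applies to $\eta\in\Irr N$ and $N\nteq U$: there is a \emph{unique} $\alpha\in\Gamma$ with $\eta^{h\alpha}=\eta$. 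Hence $\gamma_1=\alpha=\gamma_2$, and $\gamma:=\gamma_1=\gamma_2\in\Gamma=\Gal(\crp{F}(\phi)/\crp{F})$ satisfies $\theta^{h\gamma}=\theta$ and $\phi^{h\gamma}=\phi$. As $h\in H$ was arbitrary, this is exactly condition~(iii) for the composed configuration, and the lemma follows.

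I do not expect a genuine obstacle: the whole argument is a matter of bookkeeping --- keeping straight which group, normal subgroup, overgroup and character play the roles of $G,K,H,L,\theta,\phi$ in each of the three configurations --- together with the observation that forcing $\gamma_1=\gamma_2$ is precisely what the uniqueness clause of Lemma~\ref{l:fnsg} provides, since both $\gamma_i$ are solutions in $\Gamma$ of $\eta^{h\gamma_i}=\eta$. The only point where one must be a little careful is to note that $\gamma_2$, a priori an element of $\Gal(\crp{F}(\phi)/\crp{F})$, lies in the group $\Gal(\crp{F}(\eta)/\crp{F})$ to which the uniqueness statement refers, which is guaranteed by condition~(ii) of the second configuration.
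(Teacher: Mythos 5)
Your proof is correct and follows essentially the same route as the paper: the structural facts and conditions (i)--(ii) are verified exactly as there, and the key step for condition (iii) — that the Galois elements $\gamma_1,\gamma_2$ produced by the two configurations must coincide because only the identity of $\Gal(\crp{F}(\eta)/\crp{F})$ can fix $\eta^h$ — is the paper's argument, which you simply phrase via the uniqueness clause of Lemma~\ref{l:fnsg}\ref{i:fnsggal}.
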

\begin{figure}[ht]
\setlength{\unitlength}{0.45ex}
\centering
\begin{picture}(75,60)(-10,-4)
\put(23.5,2.5){\line(1,1){26}}
\put(18.5,2.5){\line(-1,1){5.5}}
\put(8.0,13.0){\line(-1,1){5.5}}
\put(2.5,23.5){\line(1,1){26}}
\put(49.5,33.5){\line(-1,1){5.5}}
\put(39.0,44.0){\line(-1,1){5.5}}
\put(13.0,13.0){\line(1,1){26}}
\put(19.3,-1){$L$}
\put(13,-2){$\phi$}
\put(-2.2,18.4){$K$}
\put(-7,17){$\theta$}
\put(29.4,50){$G$}
\put(50,30){$H$}
\put(8.2,8.4){$N$}
\put(3,8){$\eta$}
\put(39.7,39.7){$U$}
\end{picture}
\caption{A composed basic configuration}
\label{fig:combconf}
\end{figure}
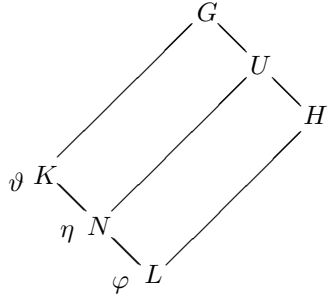
\begin{proof}
  Note that we assume in particular that $G=UK$  and $U= HN$.
  It follows that $G=HK$ and $L=H\cap K$.
  (See Figure~\ref{fig:combconf}.)
 By assumption,
 $(\theta_L, \phi)_L
   \geq (\theta_N, \eta)_N (\eta_L, \phi)_L
   >0$,
 and
  $\crp{F}(\theta)=\crp{F}(\eta)=\crp{F}(\phi)$.

  Let $h\in H$.
  By assumption, there is $\gamma_h\in
  \Gal(\crp{F}(\eta)/\crp{F})$ such that
  $\theta^{h\gamma_h}=\theta$ and $\eta^{h\gamma_h}=\eta$;
  and there is $\delta_h\in \Gal(\crp{F}(\phi)/\crp{F})$ such that
  $\eta^{h\delta_h}=\eta$ and $\phi^{h\delta_h}=\phi$.
  Since only the identity of
  $\Gal(\crp{F}(\eta)/\crp{F})$ can fix
  $\eta^h$, it follows that $\delta_h=\gamma_h$.
\end{proof}
In the situation of the lemma, suppose that we have
magic crossed representations
$\sigma_1$ and $\sigma_2$ for the configurations
$(G,U,K, N, \theta, \eta)$ and $(U,H,N,L, \eta,\phi)$,
respectively.
Then we have isometries
\begin{align*}
  \iota(\sigma_1)\colon
  &
  \compl[\Irr(G\mid e_{(\theta,\crp{F})})]
  \to
  \compl[\Irr( U \mid e_{(\eta, \crp{F})})]
  \quad \text{and} 
  \\
  \iota(\sigma_2)\colon
  &
  \compl[\Irr( U\mid e_{(\eta,\crp{F})})]
  \to
  \compl[\Irr( H\mid e_{(\phi, \crp{F})})]
\end{align*}
which commute with field automorphisms over $\crp{F}$ and have the
other properties of Theorem~\ref{t:semicorrchars}.
It follows that
\[\iota(\sigma_1)\iota(\sigma_2)
  \colon
  \compl[\Irr(G\mid e_{(\theta,\crp{F})})]
  \to
  \compl[\Irr( H\mid e_{(\phi, \crp{F})})]
\]
is an isometry as in Theorem~\ref{t:semicorrchars}.
The question arises if this isometry comes from a magic
crossed representation.
We try to prove this now.

Define
\begin{align*}
  i_1 &= \sum_{\gamma}(e_{\theta}e_{\eta})^{\gamma},
  &
  i_2 &= \sum_{\gamma}(e_{\eta}e_{\phi})^{\gamma},
  \\
  i   &= \sum_{\gamma}(e_{\theta}e_{\phi})^{\gamma},
  &
  j   &= \sum_{\gamma}(e_{\theta}e_{\eta}e_{\phi})^{\gamma},
\end{align*}
where all sums run over $\gamma\in \Gal(\crp{F}(\theta)/\crp{F})$.
Then
\[
  j = i_1 i_2 = i i_1 = i i_2 = i j = ji.
\]
Set
\[ S_1 = (i_1\crp{F}K i_1)^N, \quad
   S_2 = (i_2\crp{F}N i_2)^L
   \quad \text{and}\quad
   S = (i \crp{F}K i)^L.
   \]
Then $\Z(S)\iso \Z(S_1)\iso \Z(S_2)\iso \crp{F}(\phi)$.
All of the three centers are isomorphic to
$\Z(jSj)$ via $z\mapsto zj$.
\begin{lemma}\label{l:s1tensors2}
The map
$ s_1 \tensor s_2 \mapsto
   s_1s_2 \in S$
defines an isomorphism
\[S_1\tensor_{\Z(S)} S_2 \iso jSj.\]
\end{lemma}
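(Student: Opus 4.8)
The plan is to present $s_1\tensor s_2\mapsto s_1s_2$ as an injective homomorphism between two central simple $\Z(S)$-algebras of equal dimension. Write $Z=\Z(S)\iso\crp{F}(\phi)$ and put $n_1=(\theta_N,\eta)$, $n_2=(\eta_L,\phi)$. First I would check that $j$ is a nonzero idempotent of $S$. The relations listed after the definition of $j$ give $iji=j$, hence $jxj\in i\crp{F}Ki$ for every $x\in\crp{F}K$, so $j\in i\crp{F}Ki$; and conjugation by an element of $L$ is inner in $K$ and in $N$ and fixes $L$, hence fixes $e_\theta$, $e_\eta$, $e_\phi$, and therefore $j$, so that $j\in(i\crp{F}Ki)^L=S$. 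It is nonzero because $e_\theta e_\eta e_\phi\neq0$: for a module $V$ affording $\theta$, the space $Ve_\theta e_\eta e_\phi=Ve_\eta e_\phi$ is the $\phi$-homogeneous component of the $L$-restriction of the $\eta$-homogeneous component of $V_N$, and this is nonzero since $(\theta_N,\eta)>0$ and $(\eta_L,\phi)>0$. As $S$ is central simple over $Z$ by Lemma~\ref{l:StensorFLfiFKi}, $jSj$ is central simple over $\Z(jSj)=jZ$, which we identify with $Z$ via $z\mapsto zj$.

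Next I would verify that $\mu\colon s_1\tensor s_2\mapsto s_1s_2$ is a well-defined unital homomorphism of $Z$-algebras $S_1\tensor_Z S_2\to jSj$. Since $i_1$ is $N$-invariant, $S_1=(i_1\crp{F}Ki_1)^N$ consists of elements of $i_1\crp{F}Ki_1$ that commute with $N$, hence with $\crp{F}N$, and in particular with $S_2\subseteq i_2\crp{F}Ni_2\subseteq\crp{F}N$. Using this together with $i_1s_1=s_1i_1=s_1$, $i_2s_2=s_2i_2=s_2$ and $j=i_1i_2=i_2i_1$, one finds $js_1s_2=s_1s_2=s_1s_2j$, so $s_1s_2=js_1s_2j$ lies in $j\crp{F}Kj\subseteq i\crp{F}Ki$, is fixed by $L$, and therefore lies in $jSj$. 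Multiplicativity of $\mu$ follows from $s_1's_2=s_2s_1'$, while $Z$-linearity and unitality ($i_1\tensor i_2\mapsto j$) are clear; and $\mu$ is $Z$-balanced because for corresponding $z\in\Z(S_1)$, $z'\in\Z(S_2)$ (so $zj=z'j$) the elements $zs_1s_2$ and $s_1s_2z'$ are both equal, in $jSj$, to the product of $s_1s_2$ with the central element $zj=z'j$.

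By the analogue of Lemma~\ref{l:StensorFLfiFKi} for the configurations $(G,U,K,N,\theta,\eta)$ and $(U,H,N,L,\eta,\phi)$, the algebras $S_1$ and $S_2$ are central simple over $Z$, of $Z$-dimensions $n_1^2$ and $n_2^2$. Hence $S_1\tensor_Z S_2$ is central simple over $Z$, in particular simple, so $\mu$ is injective. On the other hand $\dim_Z jSj=r^2$, where $r$ is the reduced rank of $j$ in $S$. Extending scalars one has $S\tensor_{\crp{F}}\alcl{\crp{F}}\iso\bigoplus_\gamma S_\gamma$, where $S_\gamma=\bigl((e_\theta e_\phi)^\gamma\alcl{\crp{F}}K(e_\theta e_\phi)^\gamma\bigr)^L\iso\mat_n(\alcl{\crp{F}})$ as in the proof of Lemma~\ref{l:ctcentr}, and the same $\phi$-homogeneous-component computation shows that the image of $j$ in each $S_\gamma$ has rank $n_1n_2$; thus $r=n_1n_2$ and $\dim_Z jSj=n_1^2n_2^2=\dim_Z(S_1\tensor_Z S_2)$, so the injective homomorphism $\mu$ is an isomorphism.

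The step I expect to need the most care is the second one: one has to juggle the relations among $i,i_1,i_2,j$ to see that the naive product $s_1s_2$ really lands in $jSj$, and to reconcile the three identifications of $\Z(S_1)$, $\Z(S_2)$, $\Z(S)$ with $\crp{F}(\phi)$ so that $\mu$ descends to the tensor product over $Z$. The dimension count is then routine, its only non-formal ingredient being the rank of $j$, which is computed exactly as in the proof of Lemma~\ref{l:ctcentr}.
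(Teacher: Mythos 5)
Your proof is correct and takes essentially the same route as the paper: a homomorphism because $S_1$ and $S_2$ commute, injectivity from the simplicity of $S_1\tensor_{\Z(S)}S_2$, and surjectivity from the dimension count $\dim_{\Z(S)}(jSj)=(n_1n_2)^2$, which in both arguments rests on the multiplicity computation $Ve_\eta e_\phi \iso n_1n_2X$. The only differences are cosmetic: you obtain the dimension as the square of the reduced rank of $j$ after extending scalars from $\crp{F}$, while the paper first transfers $jSj$ to $(e_\theta e_\eta e_\phi\crp{F}(\theta)Ke_\theta e_\eta e_\phi)^L$ via Lemma~\ref{l:fe_iso} and then passes to a splitting field, and you spell out the routine verifications (that $j$ is a nonzero idempotent of $S$, that $s_1s_2\in jSj$, and the $\Z(S)$-balancedness) which the paper dismisses as clear.
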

\begin{proof}
Clearly, the map is well defined.
Since $S_1$ and $S_2$ commute, it is a ring homomorphism.
The map must be injective since both
$S_1$ and $S_2$ are central simple over $\Z(S)$.
To show that the image is all of $jSj$, it suffices to show that
$\dim_{\Z(S)}(jSj) = n_1^2n_2^2$,
since $n_i^2  = \dim_{\Z(S)}(S_i)$.
The isomorphism
$\crp{F}K e_{(\theta, \crp{F})}
 \xrightarrow{\cdot e_{\theta}}
 \crp{F}(\theta)K e_{\theta}$ takes
$jSj$ onto $T:=(e_{\theta}e_{\eta}e_{\phi}\crp{F}(\theta)K
e_{\theta}e_{\eta}e_{\phi})^L$.
To compute the dimension of $T$ over $\crp{F}(\theta)=\crp{E}$, we
may assume that $\crp{E}$ is a splitting field of
$K$, $N$ and $L$.
Let $V$ be a $\crp{E}K$\nbd module affording $\theta$.
  Then $Ve_{\eta}\iso n_1W$ as $\crp{E}N$\nbd module,
  where $W$ affords $\eta$, and
  $Ve_{\eta}e_{\phi}\iso n_1n_2X$ as $\crp{E}L$\nbd module, where
  $X$ affords $\phi$.
  Thus
  $T    = (e_{\eta}e_{\phi}\crp{E}Ke_{\theta}e_{\eta}e_{\phi})^L
    \iso \enmo_{\crp{E}L}(Ve_{\eta}e_{\phi})
    \iso \mat_{n_1n_2}(\crp{E}) $.
  Therefore $\dim_{\crp{E}} (T) = (n_1n_2)^2$ as claimed.
\end{proof}
Now let $Z=\Z(S)$ and set $Z_0 = Z^H$, the subfield fixed by the
action of $H$.
Our assumption that there are crossed magic representations
$\sigma_i$ includes the assumption that the algebras $S_i$ are
obtained by scalar extension from central simple $Z_0$-algebras
$S_{10}$ and $S_{20}$, say.
The isomorphism of Lemma~\ref{l:s1tensors2} sends
$S_{10}\tensor_{Z_0} S_{20}$ onto a central simple
$Z_0$\nbd algebra $T_0$, such that
$jSj \iso T_0 \tensor_{Z_0}Z$.
Thus the algebra $jSj$ is obtained by scalar extension from $Z_0$,
and so is the algebra class of $jSj$ in the Brauer group of $\Z(S)$.
Of course, $S$ belongs to the same class as $jSj$.
However, it doesn't follow that $S$ itself is obtained by scalar
extension from a $Z_0$-algebra.
(There exist examples to the end that some algebra $S$
can not be obtained by scalar extension,
while $\mat_k(S)$ can, for some $k>1$~\citep[\S 14]{eilmac48}.)
Thus we make the following assumption:
\begin{hyp}\label{h:bconf2_scalar}
  There are central simple $Z_0$\nbd algebras
  $S_{10}\subseteq S_1$, $S_{20}\subseteq S_2$ and $S_0\subseteq S$,
  such that
  \[ S_1 \iso S_{10}\tensor_{Z_0} Z, \quad
     S_2 \iso S_{20}\tensor_{Z_0} Z, \quad
     S \iso S_0 \tensor_{Z_0} Z
     \quad \text{and}\quad
     S_{10}S_{20}\subseteq S_0.\]
  Moreover, there are magic crossed representations
  \[ \sigma_1\colon U/N\to S_1
     \quad\text{and}\quad
     \sigma_2\colon H/L\to S_2
  \]
  which are crossed over
  $S_{10}$ and $S_{20}$, respectively.
\end{hyp}
Thus if we view $S_{10}$ and $S_{20}$ as given, we search for a
central simple $Z_0$\nbd algebra $S_0$ that contains both
$S_{10}j$ and $S_{20}j$.
This is somewhat more than only to assume that $S$ is obtained
by scalar extension from a $Z_0$\nbd algebra.
\begin{remark}
  Suppose $S_{i0}\subseteq S_i$ and $\sigma_i$ are given.
  Then Hypothesis~\ref{h:bconf2_scalar}
  holds in each of the following cases:
  \begin{enums}
  \item $Z=Z_0$ (that is, $\theta$, $\eta$ and $\phi$
             are invariant in $H$).
  \item $S_{10}$ and $S_{20}$ are matrix rings over $Z_0$ and
       $S$ is a matrix ring over $Z$.
  \item $j=i$ or, equivalently, $n=n_1 n_2$.
  \item $n_1$, $n_2$ and $n$  are all coprime to
        $\abs{Z:Z_0}$.
  \end{enums}
  (In the last case the result follows from Proposition~\ref{p:dadeschmid}.)
\end{remark}
We are now able to state the main result of this section:
\begin{prop}\label{p:induc}
  In the situation of Lemma~\ref{l:bconf_comp}, assume
  Hypothesis~\ref{h:bconf2_scalar}.
  Then there is a magic crossed representation
  \[ \sigma\colon H/L \to S = (e_{\phi}\crp{F}Ke_{\theta}e_{\phi})^L\]
   such that
  \[ \iota(\sigma) = \iota(\sigma_1)\iota(\sigma_2).\]
  For this $\sigma$, we have
  $\sigma(h)j = \sigma_1(h)\sigma_2(h)$.
\end{prop}
\begin{proof}
Set $T_0= S_{10}S_{20}$ and $T= jSj$.
Define $\tau(h)= \sigma_1(h)\sigma_2(h)\in T$.
Then for every $t_0=s_{10}s_{20}\in T_0$ we have
\[ t_0^{\tau(h)}
    = s_{10}^{\sigma_1(h)}s_{20}^{\sigma_2(h)}
    = s_{10}^h s_{20}^h
    = t_0^h,\]
since $S_1j$ and $S_2j$ commute with each other.

As before, define $\eps(h)\in \Aut S$ by
$(s_0 z)^{\eps(h)}= s_0 z^{\eps(h)}$.
Then
\begin{align*}
 \tau(x)^{\eps(y)}\tau(y)
   &= \sigma_1(x)^{\eps(y)}\sigma_1(y) \sigma_2(x)^{\eps(y)}\sigma_2(y)
   \\
   &= \sigma_1(xy)\sigma_2(xy)=\tau(xy).
\end{align*}
Again, the first equality holds since $S_1j$ and $S_2j$ commute,
and the second follows since $S_{10}$, $S_{20}\subseteq S_0$ and
$\sigma_1$ and $\sigma_2$ are crossed over $S_{10}$ and
$S_{20}$, respectively.

By Lemma~\ref{l:crcoc}, a projective crossed
representation $\sigma\colon H/L \to S$ exists,
such that $s_0^h = s_0^{\sigma(h)}$ for all
  $s_0\in S_0$.
  In particular, this holds for elements of $T_0$.
  The idempotent $j= ji$ is
  $H$\nbd invariant by assumption, so that $\sigma(h)$ centralizes
  $j$ for every $h\in H$.
  Thus $j\sigma(h)= \sigma(h)j\in T$,
  and this element is  invertible in $T$.
  For $t_0\in T_0$ we have thus
  $t_0^{j\sigma(h)}= t_0^{\sigma(h)}= t_0^h = t_0^{\tau(h)}$.
  As $\C_T(T_0)= Zj$, it follows that
  $j\sigma(h) = \lambda_h \tau(h)$ for some
  $\lambda_h\in Z$.
  Therefore $\sigma$ is projectively equivalent with a
  representation.

From now on, assume that
  $\sigma\colon H/L\to S$ is such that
  $j\sigma(h)= \tau(h)=\sigma_1(h)\sigma_2(h)$.
  We want to show that
  $\iota(\sigma)= \iota(\sigma_1)\iota(\sigma_2)$.
  Choose $s_{i}\in S_{i0}$ with
  $\tr_{S_{i0}/Z_0}(s_i)=1$.
  Then
  \[\tr_{S_0/Z_0}(s_1s_2)= \tr_{T_0/Z_0}(s_1s_2)
         = \tr_{S_{10}/Z_0}(s_1)\tr_{S_{20}/Z_0}(s_2)=1.
  \]
  Let $\chi\in \compl[\Irr(G\mid e_{(\theta, \crp{F})})]$.
  Let $u\in U$ and $\alpha\in \crp{F}N$, and
  write $\alpha= \sum_{n\in N}\alpha_n n$
  with $\alpha_n\in \crp{F}$.
  Since $N\leq \ker \sigma_1$, we have
  \begin{align*}
    \chi^{\iota(\sigma_1)}(\alpha u)
      = \sum_{n\in N}\alpha_n \chi\big( s_1 \sigma_1(nu)^{-1} nu\big)
     &= \sum_{n\in N} \alpha_n \chi\big( s_1 \sigma_1(u)^{-1}nu\big)
      \\
     &=\chi\big( s_1 \sigma_1(u)^{-1}\alpha u\big).
  \end{align*}
  Using this with $\alpha= s_2\sigma_2(h)^{-1}$, where $h\in H$ is
  arbitrary, we get
  \begin{alignat*}{3}
   \chi^{\iota(\sigma_1)\iota(\sigma_2)}( h )
      &= \chi^{\iota(\sigma_1)}( s_2\sigma_2(h)^{-1} h ) && \\
      &= \chi ( s_1 \sigma_1(h)^{-1} s_2 \sigma_2(h)^{-1} h )
         & \quad &\text{ (see above)} \\
      &= \chi( s_1 s_2 \tau(h)^{-1} h ) &&\\
      &= \chi( s_1s_2 \sigma(h)^{-1} h )
         &\quad & \text{ (as $s_1s_2\in j Sj)$} \\
      &= \chi^{\iota(\sigma)}(h)
        &\quad & \text{ (as $\tr_{S_0/Z_0}(s_1s_2)=1$) }.
        \mbox{\qedhere}
  \end{alignat*}
\end{proof}

\section{Lifting magic representations}\label{sec:lifting}
In this section, $\crp{K}$ denotes a field complete
with respect to a discrete valuation
$\nu\colon \crp{K}\to \ints$, with valuation ring
$A$ and with a perfect residue class field $\crp{F}= A/\J(A)$
of characteristic $p>0$.
The letter $\pi$ denotes a prime element of $A$, so that
$\J(A)=A\pi $.

We need the following lemma which is probably well known.
\begin{lemma}\label{l:dvrcoh}
  Let $X$ be a group that acts on $A$ and
  $\alpha\in Z^2(X, 1+ A\pi )$.
  Then the order of the cohomology class of $\alpha$
  (in $H^2(X, 1+A\pi)$, and thus in $H^2(X, A^*)$)
  is a $p$\nbd number
  (or $\infty$, if\/ $\abs{X}$ is not finite).
\end{lemma}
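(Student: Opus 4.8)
The plan is to exploit the fact that the group of principal units $U := 1 + A\pi$ is, as an abelian group, a module over the localization $\ints_{(p)}$; equivalently, that $U$ is uniquely $q$-divisible for every prime $q\neq p$. Granting this, the lemma follows by a routine argument. First I would establish the divisibility claim using Hensel's lemma. Fix a prime $q\neq p$ and an element $u = 1 + a\pi\in U$. Since $p\nmid q$, the image of $q$ in $\crp{F} = A/\J(A)$ is nonzero, so $q$ is a unit of $A$. Apply Hensel's lemma to $f(x) = x^q - u\in A[x]$ at the point $1$: here $f(1) = 1 - u\in \J(A) = A\pi$, while $f'(1) = q$ is a unit, so $\nu(f(1)) > 0 = 2\nu(f'(1))$, and Hensel's lemma yields a root $v$ with $\nu(v - 1) > 0$, that is, $v\in U$ and $v^q = u$. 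Any $v'\in U$ with ${v'}^q = u$ satisfies $\nu(v' - 1)\geq 1 > \nu(f'(1))$, hence equals $v$ by the uniqueness part of Hensel's lemma. Thus multiplication by $q$ is a bijection of $U$; as $q$ was an arbitrary prime different from $p$, the abelian group $U$ is a $\ints_{(p)}$-module. (Completeness of $\crp{K}$, hence of $A$, is exactly what makes Hensel's lemma available here.)

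Next I would pass to cohomology. Since $U$ is a $\ints_{(p)}$-module, each cochain group $C^n(X, U)$ is a $\ints_{(p)}$-module and the differentials are $\ints_{(p)}$-linear, so $H^2(X, U)$ is a $\ints_{(p)}$-module. In particular every element of $H^2(X, U)$ of finite order has order a power of $p$, which already proves the assertion when $X$ is infinite, where infinite order is permitted. When $X$ is finite, the standard restriction--corestriction argument shows that $\abs{X}$ annihilates $H^2(X, U)$; writing $\abs{X} = p^a m$ with $\gcd(m, p) = 1$ and noting that $m$ is invertible in $\ints_{(p)}$, we deduce that $p^a$ annihilates $H^2(X, U)$. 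Hence the order of the class of $\alpha$ in $H^2(X, U)$ divides $p^a$, so it is a $p$-number.

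Finally, for the parenthetical claim about $H^2(X, A^*)$: the inclusion $U = 1 + A\pi\hookrightarrow A^*$ of $X$-modules induces $H^2(X, U)\to H^2(X, A^*)$, and the order of the image of $[\alpha]$ divides its order in $H^2(X, U)$; a divisor of a $p$-number is again a $p$-number. The only substantive point is the Hensel-lemma computation showing that $1 + A\pi$ is uniquely prime-to-$p$ divisible; everything after that is formal, so I do not anticipate a real obstacle.
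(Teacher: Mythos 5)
Your proof is correct, but it is organized differently from the paper's. The paper argues directly at the cochain level: given that $\alpha^k$ is a coboundary for a $p'$\nbd number $k$, it writes $ak+bp=1$ and builds a sequence $\mu_n\colon X\to 1+A\pi$ by the recursion $\mu_{n+1}(x)=\lambda(x)^a\mu_n(x)^{bp}$, checks congruences modulo $\pi^n$, and uses completeness of $A$ to pass to the limit, producing an explicit cochain $\mu$ with $\alpha=d\mu$; no Hensel's lemma and no restriction--corestriction is invoked. You instead isolate the key structural fact about the coefficient module: via Hensel's lemma, $U=1+A\pi$ is uniquely $q$\nbd divisible for every prime $q\neq p$, hence a $\ints_{(p)}$\nbd module (equivalently, it is a pro-$p$ group), so $H^2(X,U)$ is a $\ints_{(p)}$\nbd module, and then the standard annihilation of $H^2$ by $\abs{X}$ for finite $X$ finishes the argument; the statement about $H^2(X,A^*)$ follows since orders only drop under the induced map. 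In substance the two arguments are close — the paper's iteration is a hand-made Hensel-type approximation performed simultaneously for all values $\lambda(x)$, which is exactly the unique prime-to-$p$ divisibility you establish — but your version buys conceptual clarity and reusability (the divisibility of $U$ and the $\ints_{(p)}$\nbd module structure of $H^*(X,U)$ are stated once and for all), while the paper's version buys self-containedness and an explicit cobounding cochain, avoiding both Hensel's lemma and transfer. Two small points you rely on implicitly and should note: the $X$\nbd action (by ring automorphisms of $A$) preserves $\J(A)=A\pi$, so $U$ really is an $X$\nbd submodule of $A^*$, and the $\ints_{(p)}$\nbd structure on $U$ is canonical (roots are unique), so the $X$\nbd action and the differentials are automatically $\ints_{(p)}$\nbd linear.
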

\begin{proof}
  It suffices to show that if the cohomology class of $\alpha$ has $p'$\nbd order,
  then $\alpha$ is a coboundary.
  So suppose that $\alpha$ is a cocycle
  such that $\alpha^k$ is a coboundary for some $p'$\nbd number $k$,
  say.
  Thus $\alpha(x,y)^k= \lambda(x)^y\lambda(y)\lambda(xy)^{-1}$ for some
  $\lambda\colon  X \to 1+A\pi$.

  We will construct a sequence of maps
  $\mu_n\colon X \to 1+A\pi$ ($n=1,2,\dotsc $) such that
  \begin{align*}
    \alpha(x,y)\mu_n(xy) &\equiv \mu_n(x)^y\mu_n(y) \mod \pi^n
                         \quad \text{and}  \\
    \mu_{n+1}(x) &\equiv \mu_{n}(x) \mod \pi^n
  \end{align*}
  for all $n$.
  It follows that $\mu(x)= \lim_{n\to \infty}\mu_n(x)$ exists,
  and that
  \[\alpha(x,y)= \mu(x)^y\mu(y)\mu(xy)^{-1},
  \]
  as we want to show.

  Choose $a,b\in \ints$ with
  $ak+bp = 1$.
  We define $\mu_n$ recursively by
  \[ \mu_1(x)=1_A,
    \quad  \mu_{n+1}(x)= \lambda(x)^a\mu_n(x)^{bp}
    \quad (x\in X).
  \]
  We use induction to prove the above properties:
  By assumption, we have $\alpha(x,y)\equiv 1 \mod \pi$ for all $x,y\in X$.
  Also $\mu_2(x)=\lambda(x)^a\equiv 1 =\mu_1(x)\mod \pi$ since
  $\lambda(x)\in 1+A\pi$ by assumption.
  From $\mu_{n+1}(x)\equiv \mu_{n}(x)\mod \pi^{n}$ it follows
  $\mu_{n+1}(x)^p \equiv \mu_{n}(x)^p \mod \pi^{n+1}$ and thus
  \begin{align*}
    \mu_{n+2}(x) &= \lambda(x)^a\mu_{n+1}(x)^{bp} \\
              &\equiv  \lambda(x)^a\mu_{n}(x)^{bp} \mod \pi^{n+1}\\
              &= \mu_{n+1}(x).
  \end{align*}
  Assuming that
  $\alpha(x,y)\equiv \mu_n(x)^y\mu_n(y) \mu_n(xy)^{-1}\mod \pi^n$ by
  induction, we get
  \begin{align*}
  \alpha(x,y) &= \alpha(x,y)^{ak+bp}
            = \left(\lambda(x)^y \lambda(y)\lambda(xy)^{-1}\right)^a \alpha(x,y)^{bp} \\
            &\equiv \left(\lambda(x)^y \lambda(y) \lambda(xy)^{-1} \right)^a
                \left(\mu_n(x)^y \mu_n(y) \mu_n(xy)^{-1} \right)^{bp}
                \mod \pi^{n+1}\\
            &=
            \mu_{n+1}(x)^y\mu_{n+1}(y)\mu_{n+1}(xy)^{-1}.
  \end{align*}
  The proof is finished.
\end{proof}
For convenience, we now fix notation to be used in the rest
of this section.
\begin{hyp}\label{h:bconf2_dvr}
Let $A$ be a complete discrete valuation ring with quotient field
$\crp{K}$ of characteristic zero and residue class field
$A/A\pi =\crp{F}$ of characteristic $p>0$.
Suppose we are in the situation of
Hypothesis~\ref{h:bconf2} with $\crp{K}$ instead of $\crp{F}$.
Recall that $K\nteq G$ and $H\leq G$ with $G=KH$ and
$L=H\cap K$, and that $\theta\in \Irr K$ and $\phi\in \Irr L$ are such
that $\crp{K}(\theta)=\crp{K}(\phi)$, and for every
$h\in H$ there is $\gamma_h\in \Gal(\crp{K}(\theta)/\crp{K})$ such
that $\theta^{h\gamma_h}=\theta $ and $ \phi^{h\gamma_h}=\phi$.

Let $\crp{L}=\crp{K}(\theta)$
and let $B$ be the integral closure of $A$ in
$\crp{L}$.
It is well known that $\nu$ extends uniquely to a valuation
on $\crp{L}$ and that $B$ is the corresponding
valuation ring~\citep[Chap.~II, {\S}~2]{serreCL}.
Let $\crp{E}= B/\J(B)$ be the residue class field of $\crp{L}$.

In addition, we assume that $\theta$ and $\phi$ have
$p$\nbd defect zero.
\end{hyp}
It follows that
$e_{\theta}\in BK$ and $e_{\phi}\in BL$.
Moreover,  $\theta$ and $\phi$ vanish on elements of order divisible
by $p$~\citep[8.17]{isaCTdov}.
Thus the values of $\theta$ and $\phi$
are contained in $\rats(\eps)$, where
$\eps$ is a primitive $m$\nbd th root
of unity with $m$ not divisible by $p$.
It follows that $\crp{L}$ is unramified over
$\crp{K}$~\citep[Chap.~IV, {\S}~4]{serreCL}.
Thus $\abs{\crp{L}:\crp{K}}=\abs{\crp{E}:\crp{F}}$ and
$\J(B)=B \pi$. The canonical homomorphism
$\Gal(\crp{L}/\crp{K})\to \Gal(\crp{E}/\crp{F})$ is an
isomorphism.

The canonical epimorphism $\overline{\phantom{x}}\colon B \to\crp{E}$ extends
naturally to $BG$, and we denote this extension also by
$\overline{\phantom{x}}$.
Since we will work mostly with $AG$ and $\crp{F}G$, we emphasize
that we also use
the symbol $\overline{\phantom{x}}$ to denote the restriction
$\overline{\phantom{x}} \colon AG \to \crp{F}G$,
which is an epimorphism from $AG$ onto $\crp{F}G$.

Suppose there is a magic crossed representation
$\sigma\colon H/L\to(\overline{i}\crp{F}K \overline{i})^L$,
where
$i= \sum_{\gamma\in \Gal(\crp{L}/\crp{K})} e_{\theta}e_{\phi}$.
We wish to show that
$\sigma$ lifts to a crossed magic representation
$\widehat{\sigma}\colon H/L \to (i AK i)^L
\subseteq (i\crp{K} K i)^L$.

Before we do this, we observe the following:
\begin{lemma}\label{l:matrb}
  $(\overline{i}\crp{F}K \overline{i})^L
   \iso \mat_n(\crp{E})$ and
  $(i A K i)^L \iso \mat_n(B)$.
\end{lemma}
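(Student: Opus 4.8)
The plan is to deduce both isomorphisms from the single fact that the defect-zero blocks of $K$ and $L$ are \emph{split} matrix rings over $B$; everything else is bookkeeping with orders and matrix units over the complete discrete valuation ring $B$.

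\emph{Step 1: the blocks are split.} Since $\theta$ has $p$-defect zero, $e_\theta\in BK$ and $e_\phi\in BL$, so $i=\T_{\crp{K}}^{\crp{L}}(e_\theta e_\phi)\in AK$ and $\overline i\in\crp{F}K$. The reduction $\overline{e_\theta}$ is a block idempotent of $\crp{E}K$, and $\crp{E}Ke_{\overline\theta}=\overline{BKe_\theta}$ is a single defect-zero block, hence central simple over its centre; because $\crp{L}/\crp{K}$ is unramified that centre is exactly $\crp{E}$ and $\dim_{\crp{E}}\crp{E}Ke_{\overline\theta}=\theta(1)^2$. I would then argue $\crp{E}Ke_{\overline\theta}\iso\mat_{\theta(1)}(\crp{E})$: this block is already defined over the finite subfield $\crp{F}_0\subseteq\crp{E}$ generated by the (prime-to-$p$) roots of unity occurring in the Brauer character of $\overline\theta$, and $\crp{F}_0Ke_{\overline\theta}$ is central simple over the finite field $\crp{F}_0$, hence a full matrix ring (Wedderburn); extending scalars to $\crp{E}$ gives the claim. (Equivalently one may cite the well-known fact that a defect-zero block of $BK$ is a full matrix ring over $B$.) The same applies to $\phi$.

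\emph{Step 2: descend to $B$ and to $iAKi$.} From the split residue ring one gets $BKe_\theta\iso\mat_{\theta(1)}(B)$ (lift a full set of matrix units from $\crp{E}Ke_{\overline\theta}$; their centralizer is a $B$-order in $\crp{L}$ with residue ring $\crp{E}$, hence equals $B$ by Nakayama), whence also $\crp{L}Ke_\theta\iso\mat_{\theta(1)}(\crp{L})$, and likewise $BLe_\phi\iso\mat_{\phi(1)}(B)$. By Lemma~\ref{l:fe_iso} the map $a\mapsto ae_\theta$ gives $\crp{K}Ke_{(\theta,\crp{K})}\iso\crp{L}Ke_\theta$; it sends $AK$ into $BK$ and its inverse (the field trace) sends $BK$ into $AK$, so it restricts to $AKe_{(\theta,\crp{K})}\iso BKe_\theta\iso\mat_{\theta(1)}(B)$. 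As $i$ is an idempotent of $AKe_{(\theta,\crp{K})}$ and idempotents of a matrix ring over a complete local ring are conjugate to diagonal ones, $iAKi\iso\mat_{r}(B)$; comparing with $\dim_{\crp{L}}i\crp{K}Ki=n^2\phi(1)^2$ from Lemma~\ref{l:StensorFLfiFKi} gives $r=n\phi(1)$. In particular $\Z(iAKi)=B$ and, reducing mod $\pi$, $\overline i\,\crp{F}K\,\overline i=\overline{iAKi}\iso\mat_{n\phi(1)}(\crp{E})$.

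\emph{Step 3: take $L$-fixed points.} For $l\in L$ we have $il=li$ (since $i^h=i$), the element $li$ is a unit of $iAKi$ with inverse $l^{-1}i$, and $x^l=(li)^{-1}x(li)$ for all $x\in i\crp{K}Ki$; hence $(iAKi)^L=\C_{iAKi}(\{li:l\in L\})$. Now $ALi$ is the image of $ALe_{(\phi,\crp{K})}\iso\mat_{\phi(1)}(B)$ under $a\mapsto ai$, so it contains a full set $E$ of $\phi(1)\times\phi(1)$ matrix units, full already in $iAKi$ (its unit is $i$). Since $B=\Z(iAKi)$, centralizing $E$ equals centralizing $ALi$, so $\C_{iAKi}(E)=(iAKi)^L$, and the matrix-unit decomposition of Section~\ref{sec:cssa} yields $iAKi\iso\mat_{\phi(1)}((iAKi)^L)$. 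Comparing with $iAKi\iso\mat_{n\phi(1)}(B)$ shows $(iAKi)^L$ is Morita equivalent to $B$; a finitely generated projective module over the local ring $B$ is free, so $(iAKi)^L\iso\mat_t(B)$, and since $(iAKi)^L\tensor_A\crp{K}=(i\crp{K}Ki)^L$ is central simple of $\crp{L}$-dimension $n^2$ (Lemma~\ref{l:StensorFLfiFKi}) we get $t=n$. As the matrix-unit decomposition reduces well mod $\pi$, this also gives $(\overline i\,\crp{F}K\,\overline i)^L=\overline{(iAKi)^L}\iso\mat_n(\crp{E})$, which is the assertion.

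The main obstacle is Step~1: proving the defect-zero blocks are split over $B$ (equivalently, that $\theta$ and $\phi$ have Schur index $1$ over $\crp{K}(\theta)=\crp{K}(\phi)$). The only other delicate point, that taking $L$-invariants commutes with reduction mod $\pi$, I would handle exactly as above by realizing the $L$-action as conjugation by the units $li\in iAKi$, so that $(iAKi)^L$ appears as the coefficient ring of a matrix-unit decomposition, which reduces compatibly.
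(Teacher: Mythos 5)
Your proposal is correct, but it runs in the opposite direction from the paper's proof. The paper proves the residue statement first: since $\crp{F}$ has characteristic $p$, the defect-zero blocks $\crp{F}K\overline{e_{(\theta,\crp{K})}}$ and $\crp{F}L\overline{i}\iso\crp{F}L\overline{e_{(\phi,\crp{K})}}$ are matrix rings over $\crp{E}$ (triviality of Schur indices in characteristic $p$ --- the very fact your Step~1 re-proves via a finite subfield and Wedderburn), so $(\overline{i}\crp{F}K\overline{i})^L=\C_{\overline{i}\crp{F}K\overline{i}}(\crp{F}L\overline{i})$ is again a matrix ring by the centralizer theorem, with center $\crp{E}$ and dimension $n^2$ by Lemmas~\ref{l:centerisos} and~\ref{l:StensorFLfiFKi}; the integral statement is then obtained in one stroke by lifting matrix units to $(iAKi)^L$ using completeness of $A$. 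You instead work integrally from the start: you split $BKe_{\theta}$ and $BLe_{\phi}$ over $B$ by lifting matrix units into the blocks themselves, deduce $AKe_{(\theta,\crp{K})}\iso\mat_{\theta(1)}(B)$ and hence $iAKi\iso\mat_{n\phi(1)}(B)$, realize $(iAKi)^L$ as the coefficient ring of the matrix-unit decomposition of $iAKi$ coming from matrix units inside $ALi$, conclude $(iAKi)^L\iso\mat_{n}(B)$ by a Morita/local-freeness argument, and only then reduce modulo $\pi$. Both routes rest on the same two external ingredients (splitness of defect-zero blocks in characteristic $p$, lifting of idempotents and matrix units over complete rings), but your order of events buys something the paper's two-line proof leaves implicit: an explicit reason why taking $L$-fixed points commutes with reduction modulo $\pi$ (equivalently, why the ring to which one lifts matrix units really has $\mat_n(\crp{E})$ as its reduction), namely your observation that the $L$-action is conjugation by the units $li\in iAKi$, so that $(iAKi)^L$ is a coefficient ring whose formation is compatible with reduction; the paper's route, in exchange, is much shorter because it reuses the structural lemmas already proved and the centralizer theorem over $\crp{E}$. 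Two small imprecisions in your write-up, neither fatal: $\crp{F}_0Ke_{\overline{\theta}}$ need not be central simple over $\crp{F}_0$ (its center can be a larger finite field), though Wedderburn applied over that center still gives splitness; and the assertion in Step~3 that centralizing $E$ equals centralizing $ALi$ needs the one-line remark that $\Z(ALi)\subseteq\Z(i\crp{K}Ki)$, so that the $E$-coefficients of elements of $ALi$ are central in $iAKi$.
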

\begin{proof}
  The first isomorphism follows since
  $\Z((\overline{i}\crp{F}K \overline{i})^L)\iso \crp{E}$
  and $\crp{F}$ has characteristic $p>0$:
  Because then $\crp{F}K \overline{e_{(\theta, \crp{K})}}$ and
  $\crp{F}L \overline{i}\iso
   \crp{F}L \overline{e_{(\phi, \crp{K})}} $
  are matrix rings over
  $\crp{E}$, and thus
  $(\overline{i}\crp{F}K \overline{i})^L
   =
   \C_{\overline{i}\crp{F}K \overline{i}}(\crp{F}L \overline{i}) $
  must be a matrix ring, too.

  Since $A$ is complete, we may lift idempotents,
  and thus the matrix units,
  to $(i A K i)^L$~\citep[Proposition~21.34]{lamFC}.
  The second isomorphism follows.
\end{proof}
Next we show that
Lemma~\ref{l:crcoc} extends to valuation rings.
Set $\Sigma = (iAK i)^L$ and $S= (i\crp{K}Ki)^L$.
The
isomorphism $\Z(S)\iso \crp{L}$ of Lemma~\ref{l:centerisos}
makes $S$ into an $\crp{L}$\nbd algebra.
Restriction to $B$ yields an isomorphism $B\iso \Z(\Sigma)$,
so we may view $\Sigma$ as
a $B$\nbd algebra.
These isomorphisms respect the action of $H$.
    Let $\crp{L}_0 =\crp{L}^H$, the subfield of elements fixed by
  $H$,
  and set $B_0 = B\cap \crp{L}_0 (= B^H)$.

  By Lemma~\ref{l:matrb}, $\Sigma$ contains a set of matrix units,
  $E$ (say).
  Let $\Sigma_0$ be the $B_0$\nbd subalgebra generated by
  $E$, so that $\Sigma_0\iso\mat_n(B_0)$
  and $\Sigma \iso \Sigma_0 \tensor_{B_0} B$.
  Similarly, let $S_0$ be the
  $\crp{L}_0$\nbd subalgebra of $S$ generated by $E$.
  Then clearly $S_0 \iso \Sigma_0\tensor_{B_0}\crp{L}_0$.

  The identification of $S$ with a matrix ring over $\crp{L}$
  yields an action of $\Gal(\crp{L}/\crp{K})$ on $S$;
  since $H$ acts on $\crp{L}$, we get an action of $H$ on $S$
  which we denote by $\eps$.
  Thus, for $s_0\in S_0$ and $z\in \Z(S)$,
  we have $(s_0 z)^{\eps(h)}= s_0 z^h$.
  This action maps $\Sigma$ into $\Sigma$.

  Since $\Sigma$ is a matrix ring over the local ring $\Z(\Sigma)\iso B$,
  every automorphism of $\Sigma$ centralizing $\Z(\Sigma)$
  is inner~\citep[Chap.~2, Theorem~4.8]{ntRFG}.
  This applies to $s\mapsto s^{\eps(h)^{-1}h}$.
  Thus there is, for every $h\in H$, an element
  $\sigma(h)\in \Sigma^*$ such that
  $s_0^h = s_0^{\sigma(h)}$ for all $s_0\in \Sigma_0$
  (and then in fact for all $s_0\in S_0$).
  This yields a
  crossed projective representation
  $\sigma\colon  H/L \to \Sigma^* $.
  We have thus proved:
\begin{lemma}\label{l:dvrprcr}
  In the situation of Hypothesis~\ref{h:bconf2_dvr}, there
  is a crossed projective representation
  $\sigma\colon H/L \to (iAK i)^L=\Sigma$ such that
  $s^h = s^{\sigma(Lh)}$ for all $s\in \Sigma_0$, where
  $\mat_n(B^H)\iso\Sigma_0\subseteq (iAKi)^L$.
\end{lemma}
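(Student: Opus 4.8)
The plan is to imitate the proof of Lemma~\ref{l:crcoc} almost verbatim, working over the valuation ring $B$ (the integral closure of $A$ in $\crp{L}=\crp{K}(\theta)$) in place of the field $\crp{L}$. The only point where the argument genuinely differs is that the Skolem--Noether theorem is no longer available over $B$ and must be replaced by the fact that every $B$-algebra automorphism of $\mat_n(B)$ is inner~\citep[Chap.~2, Theorem~4.8]{ntRFG}.

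Concretely, I would first record the set-up. By Lemma~\ref{l:matrb} we have $\Sigma=(iAKi)^L\iso\mat_n(B)$, so $\Sigma$ contains a full set of matrix units $E$. Set $\crp{L}_0=\crp{L}^H$ and $B_0=B^H=B\cap\crp{L}_0$, let $\Sigma_0$ be the $B_0$-subalgebra of $\Sigma$ generated by $E$, and let $S_0$ be the $\crp{L}_0$-subalgebra of $S=(i\crp{K}Ki)^L$ generated by $E$. Then $\Sigma_0\iso\mat_n(B_0)$, $S_0\iso\mat_n(\crp{L}_0)$, $\Sigma\iso\Sigma_0\tensor_{B_0}B$, $S\iso S_0\tensor_{\crp{L}_0}\crp{L}$, and $\C_\Sigma(\Sigma_0)=\C_\Sigma(E)=\Z(\Sigma)\iso B$; here the identifications $\Z(S)\iso\crp{L}$ and $\Z(\Sigma)\iso B$ come from Lemma~\ref{l:centerisos} (applied with $\crp{K}$ in place of $\crp{F}$) and are compatible with the action of $H$. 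This fixes the choice of $S_0$, hence of $\Sigma_0$, once and for all.

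Next I would introduce the ``scalar'' action $\eps$ of $H$ on $S$ by $(s_0z)^{\eps(h)}=s_0z^h$ for $s_0\in S_0$, $z\in\Z(S)$. Since $\eps(h)$ fixes $S_0$ (and $\Sigma_0$) pointwise and acts on $\Z(\Sigma)\iso B$ through $\Gal(\crp{L}/\crp{K})$, which preserves $B$, it carries $\Sigma$ into $\Sigma$. For each $h\in H$ the composite $s\mapsto s^{\eps(h)^{-1}h}$ is then a well-defined automorphism of $\Sigma$: conjugation by $h$ preserves $\Sigma$ because $i$ is $H$-invariant and $L\nteq H$, so it sends $(iAKi)^L$ to $(iAKi)^{L^h}=(iAKi)^L$; and this composite fixes $\Z(\Sigma)$ pointwise because conjugation by $h$ and $\eps(h)$ induce the same automorphism of $\Z(S)$ (Lemma~\ref{l:centerisos} and Part~\ref{i:centchgalois} of Lemma~\ref{l:fnsg}). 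By the cited theorem this $B$-algebra automorphism of $\Sigma\iso\mat_n(B)$ is inner, so there is $\sigma(h)\in\Sigma^{*}$ with $s^{\eps(h)^{-1}h}=s^{\sigma(h)}$ for all $s\in\Sigma$; restricting to $\Sigma_0$, on which $\eps(h)^{-1}$ acts trivially, gives $s_0^{\,h}=s_0^{\,\sigma(h)}$ for all $s_0\in\Sigma_0$, and then for all $s_0\in S_0$ by extension of scalars. Finally, the idempotent $i$ is centralized by $L$ (inner automorphisms fix the characters $\theta$, $\phi$ and hence $e_\theta$, $e_\phi$), so $L$ acts trivially on $\Sigma=(iAKi)^L$ and $s_0^{\,lh}=s_0^{\,h}$ for $l\in L$; hence $\sigma$ may be chosen constant on cosets of $L$, yielding $\sigma\colon H/L\to\Sigma^{*}$ with $s^h=s^{\sigma(Lh)}$ for $s\in\Sigma_0$. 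That $\sigma$ is a \emph{crossed projective} representation, i.e.\ that $\sigma(x)^{\eps(y)}\sigma(y)$ and $\sigma(xy)$ differ by a factor in $\Z(\Sigma)^{*}=B^{*}$, follows from $\C_\Sigma(\Sigma_0)=\Z(\Sigma)$ by the same computation as in Lemma~\ref{l:crcoc}.

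The step I expect to be the crux is the inner-automorphism fact: over a field it is Skolem--Noether, but over $B$ one must instead use that $\mat_n$ of a commutative local ring admits only inner $B$-algebra automorphisms. Everything hinges on having arranged that $\Sigma\iso\mat_n(B)$ with $B$ local, which is exactly where completeness of $A$ enters (it is what allowed idempotents, and hence matrix units, to be lifted in Lemma~\ref{l:matrb}); once $\Sigma_0\iso\mat_n(B_0)$ is in hand, the remainder is a routine transcription of the field-theoretic argument. A secondary thing to be careful about is checking that the twisted map $s\mapsto s^{\eps(h)^{-1}h}$ really lands back in $\Sigma$ and fixes $\Z(\Sigma)$ pointwise; both follow from $H$-invariance of $i$, normality of $L$ in $H$, and the $H$-compatibility of the center identifications recorded in Lemmas~\ref{l:centerisos} and~\ref{l:fnsg}.
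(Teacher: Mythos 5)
Your proposal is correct and follows essentially the same route as the paper: lift matrix units via Lemma~\ref{l:matrb} to get $\Sigma_0\iso\mat_n(B_0)$, define the scalar action $\eps$, and replace Skolem--Noether by the fact that an automorphism of $\mat_n(B)$ fixing the center of the local ring $B$ is inner (same citation), applied to $s\mapsto s^{\eps(h)^{-1}h}$. The extra verifications you spell out (that this composite preserves $\Sigma$ and fixes $\Z(\Sigma)$, and that $\sigma$ factors through $H/L$) are exactly the points the paper leaves implicit.
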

\begin{thm}\label{t:dvr2}
  In the situation of Hypothesis~\ref{h:bconf2_dvr},
  suppose that
  there is a magic crossed representation
  $\sigma\colon  H/L \to (\overline{i}\crp{F}K \overline{i})^L$.
  Then for every $p'$\nbd subgroup $V/L\leq H/L$ there is a
  magic crossed representation $\widehat{\sigma}\colon V/L \to (iAKi)^L $
  lifting $\sigma_{V/L}$
  (that is,
   $\overline{\widehat{\sigma}(x)}= \sigma(x)$
   for $x\in V/L$).
  If $n=(\theta_L, \phi)\not\equiv 0\mod p$, then there is
  a magic crossed representation $\widehat{\sigma}\colon H/L\to (iAKi)^L $
  lifting $\sigma$.
\end{thm}
\begin{proof}
  Let
  $\widehat{\sigma}\colon  H/L \to \Sigma =(iAKi)^L$
  be a
  crossed projective representation, which exists
  by Lemma~\ref{l:dvrprcr}.

  Since $\widehat{\sigma}(h)\in \Sigma^{*}$,
  reduction modulo $\pi$ yields a
  crossed projective representation
  \[ h \mapsto
     \overline{\widehat{\sigma}(h)}
     \in
     \overline{\Sigma}
     = (\overline{i}\crp{F}K \overline{i})^L.
  \]
  Clearly, $t^{\overline{\widehat{\sigma}(h)}} = t^h$ for
  $t\in \overline{\Sigma}_0\iso \mat_n(\crp{E}^H)$.
  After multiplying $\widehat{\sigma}$ with a suitable factor from
  $\Z(\Sigma)$, we may assume that
  $\overline{\widehat{\sigma}(h)}=\sigma(h)$ for $h\in H$.
  Let $\alpha\in Z^2(H/L, B^*)$ be the cocycle associated with
  $\widehat{\sigma}$. Then $\alpha$ has values in $1+B\pi$, since
  $\sigma$ is multiplicative. By Lemma~\ref{l:dvrcoh}, the
  cohomology class of $\alpha$ has $p$\nbd order.
  In particular, $\alpha_{V/L}\sim 1$ for any $p'$\nbd group $V/L$.
  Thus $\widehat{\sigma}_{V/L}$ is projectively equivalent with a
  crossed representation.
  If $n\not\equiv 0 \mod p$, then it follows $\alpha \sim 1$,
  since the class of $\alpha$ has order dividing $n$.
  The proof is finished.
\end{proof}

\section{Reducing magic representations modulo a
prime}\label{sec:reduc}
Assume Hypothesis~\ref{h:bconf2_dvr}.
As before, let
$i = \sum_{\gamma\in \Gal(\crp{L}/\crp{K})}
(e_{\theta}e_{\phi})^{\gamma}$ and
$S= (i\crp{K}Ki)^L$.
Our aim here is to show that if there is a magic representation
$\sigma\colon H/L \to S$, then not only
$\crp{K}G e_{(\theta,\crp{K})}$ and $\crp{K}He_{(\phi, \crp{K})}$
are Morita equivalent, but also
$A Ge_{(\theta,\crp{K})}$ and
$A He_{(\phi, \crp{K})}$.

There is a quite general result of Brou\'{e} of this
kind~\citep{broue90b}, but verifying the premises of Brou\'{e}'s result
is nearly the same amount of work as proving the desired result
directly.

\begin{lemma}
  $AKe_{(\theta,\crp{K})} = A K i A K$.
\end{lemma}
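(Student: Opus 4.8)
The plan is to apply Nakayama's lemma over the complete discrete valuation ring $A$ and to reduce the desired equality modulo $\J(A)=A\pi$ to a statement about blocks of the characteristic-$p$ group algebra $\crp{F}K$, where it follows from the simplicity of defect-zero blocks.

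First I note that $e := e_{(\theta,\crp{K})}=\sum_{\gamma}e_\theta^\gamma$ (the sum over $\gamma\in\Gal(\crp{L}/\crp{K})$) lies in $AK$: each $e_\theta^\gamma=e_{\theta^\gamma}$ lies in $BK$, and the $\Gal(\crp{L}/\crp{K})$-fixed sum has coefficients in $B\cap\crp{K}=A$. The same reasoning shows $i\in AK$. Put $M=AKe$ and $N=AKiAK$. Since $ie=i$, the set $N$ is a two-sided ideal of $M$ contained in $M$, and both are finitely generated $A$-modules: $M$ is the image of the free module $AK$ under $x\mapsto xe$, and $N$ is the $A$-span of the finite set $\{k_1ik_2:k_1,k_2\in K\}$. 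As $A$ is Noetherian and local, Nakayama's lemma reduces the claim $M=N$ to the claim $M=N+\J(A)M$.

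Next I pass to the residue field via the reduction epimorphism $\rho\colon AK\to\crp{F}K$, whose kernel is $\pi AK$. Writing $\overline{e}=\rho(e)$ and $\overline{i}=\rho(i)$, one has $\rho(M)=\crp{F}K\overline{e}$ with $\ker(\rho|_M)=M\cap\pi AK=\pi M=\J(A)M$ (indeed if $x=\pi y\in M$ then $x=xe=\pi(ye)\in\pi M$), and $\rho(N)=\crp{F}K\overline{i}\,\crp{F}K$. Hence $M=N+\J(A)M$ amounts to $\crp{F}K\overline{i}\,\crp{F}K=\crp{F}K\overline{e}$. Now $\overline{e}$ is a central idempotent of $\crp{F}K$ with $\overline{i}\,\overline{e}=\overline{ie}=\overline{i}$, so $\crp{F}K\overline{i}\,\crp{F}K$ is a two-sided ideal of $\crp{F}K\overline{e}$; it is nonzero because $\overline{i}\neq 0$ (a nonzero idempotent of the $\pi$-adically complete ring $AK$ cannot lie in $\pi AK$, since then it would lie in $\bigcap_m\pi^m AK=0$). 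So it suffices to know that $\crp{F}K\overline{e}$ is a \emph{simple} $\crp{F}$-algebra.

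This is the one place where the $p$-defect zero hypothesis is essential, and it is the crux of the argument. The idempotent $\overline{e}$ is the reduction $\sum_{\gamma}\overline{e_\theta}^\gamma$ of $e$, the sum now running over $\Gal(\crp{E}/\crp{F})\iso\Gal(\crp{L}/\crp{K})$; the $\overline{e_\theta}^\gamma=\overline{e_{\theta^\gamma}}$ are pairwise orthogonal primitive central idempotents of $\crp{E}K$ (distinct since the $e_{\theta^\gamma}$ are, orthogonal since orthogonal idempotents reduce to orthogonal idempotents, and primitive central with \emph{simple} block algebras $\crp{E}K\overline{e_\theta}^\gamma$ because $\theta$ has $p$-defect zero). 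The group $\Gal(\crp{E}/\crp{F})$ permutes these blocks transitively, and $\crp{E}\tensor_{\crp{F}}\crp{F}K\overline{e}\iso\crp{E}K\overline{e}=\bigoplus_\gamma\crp{E}K\overline{e_\theta}^\gamma$; a finite-dimensional $\crp{F}$-algebra whose scalar extension to the Galois field $\crp{E}$ is a direct product of simple algebras permuted transitively by $\Gal(\crp{E}/\crp{F})$ must be simple. (Equivalently, $\overline{e}$ is the defect-zero block idempotent of $\crp{F}K$ afforded by the reduction of $\theta$, the analogue over $\crp{F}$ of the idempotent of Lemma~\ref{l:Traceidempotent}.) Therefore the nonzero ideal $\crp{F}K\overline{i}\,\crp{F}K$ fills up the simple algebra $\crp{F}K\overline{e}$, and Nakayama's lemma yields $M=N$, that is, $AKe_{(\theta,\crp{K})}=AKiAK$. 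Apart from establishing this simplicity, every step is routine bookkeeping with the reduction map.
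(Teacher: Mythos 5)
Your proposal is correct and follows essentially the same route as the paper: reduce modulo $\pi$, use that $\crp{F}K\overline{e_{(\theta,\crp{K})}}$ is simple (so the nonzero ideal generated by $\overline{i}$ is everything), and conclude by Nakayama's lemma. You merely spell out details the paper leaves implicit, such as the Galois-descent argument for simplicity and why $\overline{i}\neq 0$.
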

\begin{proof}
  As $\crp{F}K \overline{e_{(\theta,\crp{K})}}$ is  simple, we
  have $\crp{F}K \overline{e_{(\theta,\crp{K})}}=
       \crp{F}K \overline{i} \crp{F}K$.
  Thus
  \[AKe_{(\theta, \crp{K} ) } = A K i A K + \pi AKe_{(\theta, \crp{K} ) }.
  \]
  By Nakayama's lemma, the result follows.
\end{proof}
It follows that $AGe_{(\theta, \crp{K} ) }$ and $i AG i$ are
Morita equivalent.
We now assume that a magic crossed representation
$\sigma\colon H/L \to S\iso \mat_n(\crp{L})$
exists.
We know that then $i \crp{K}G i \iso \mat_n(\crp{K}He_{(\phi,\crp{K})})$, and we want to
show that the same is true if we replace $\crp{K}$ by $A$.
We have seen in the last section that
$\Sigma = (iAKi)^L \iso \mat_n(B)$.
Choose $\Sigma_0\subseteq \Sigma$ with
$\Sigma_0\iso\mat_n(B^H)$ and let
$S_0 = \crp{K}\Sigma_0 \iso \crp{K}\tensor_A \Sigma_0$.
Then
$iAGi \iso \mat_n(\Gamma)$, where $\Gamma= \C_{iAGi}(\Sigma_0)$.
It is clear that $\Gamma = AG \cap C$, where
$C=\C_{i\crp{K}Gi}(S_0)$.
We have to show that the isomorphism
of Theorem~\ref{t:c-isosemi}
sends $AHe_{(\phi, \crp{K})}$ onto $\Gamma$.
We first show
   that $ \sigma(H) \subseteq \Sigma = (iAKi)^L$.
\begin{lemma}\label{l:sigminSigm}
  In the situation of Hypothesis~\ref{h:bconf2_dvr}, suppose that
  there is a magic crossed representation
  $\sigma\colon H/L \to S = (i\crp{K}K i)^L$.
  Then $\sigma(H/L)\subseteq \Sigma=(iAK i)^L$.
\end{lemma}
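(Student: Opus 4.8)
The plan is to compare the given magic crossed representation $\sigma$ with the crossed projective representation $\widehat\sigma\colon H/L\to\Sigma=(iAKi)^L$ supplied by Lemma~\ref{l:dvrprcr}, which is crossed with respect to $\Sigma_0\cong\mat_n(B^H)$, and to show that the two differ only by a map $H/L\to B^*$. We may assume that $\sigma$ itself is crossed with respect to the same subalgebra $S_0=\crp{K}\Sigma_0$ (so that both representations use the fixed twisting action $\eps$, built from matrix units lying inside $\Sigma$ as in Lemma~\ref{l:matrb}). For every $s_0\in\Sigma_0$ we then have $s_0^{\sigma(x)}=s_0^{x}=s_0^{\widehat\sigma(x)}$, so $\sigma(x)\widehat\sigma(x)^{-1}$ centralizes $\Sigma_0$, hence centralizes $S_0=\crp{L}_0\Sigma_0$, and therefore lies in $\C_S(S_0)=\Z(S)\cong\crp{L}$. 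Thus $\sigma(x)=\mu_x\widehat\sigma(x)$ with $\mu_x\in\crp{L}^*$, and it remains to show each $\mu_x\in B^*$.

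Next I would exploit the two multiplicativity relations. Since $\sigma$ is magic, $\sigma(x)^{\eps(y)}\sigma(y)=\sigma(xy)$; since $\widehat\sigma$ is a crossed projective representation, $\widehat\sigma(x)^{\eps(y)}\widehat\sigma(y)=\alpha(x,y)\widehat\sigma(xy)$, where the cocycle $\alpha\in Z^2(H/L,B^*)$ takes values in $\C_\Sigma(\Sigma_0)^{*}=B^{*}$. Because $\mu_x$ is central and $\eps(y)$ restricts on $\Z(S)\cong\crp{L}$ to the Galois action of $H$, substituting $\sigma(x)=\mu_x\widehat\sigma(x)$ and comparing coefficients of $\widehat\sigma(xy)$ yields
\[
\alpha(x,y)=\mu_{xy}\,\mu_y^{-1}\,(\mu_x^{\,y})^{-1},
\]
i.e.\ $\alpha$ is the coboundary of $x\mapsto\mu_x^{-1}$ over $\crp{L}^{*}$.

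The crux is a valuation argument. The valuation $\nu$ extends uniquely from $\crp{K}$ to $\crp{L}=\crp{K}(\theta)$ and is hence $H$-invariant, since each $h\in H$ acts on $\crp{L}$ by a $\crp{K}$-automorphism. Applying $\nu$ to the displayed identity and using $\nu(\alpha(x,y))=0$ together with $\nu(\mu_x^{\,y})=\nu(\mu_x)$, we obtain $\nu(\mu_{xy})=\nu(\mu_x)+\nu(\mu_y)$, so $x\mapsto\nu(\mu_x)$ is a group homomorphism $H/L\to\ints$; as $H/L$ is finite and $\ints$ is torsion-free it is trivial, whence $\mu_x\in B^{*}$ for all $x$. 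Therefore $\sigma(x)=\mu_x\widehat\sigma(x)\in B^{*}\Sigma\subseteq\Sigma$, as claimed. The main obstacle is the bookkeeping about the subalgebra with respect to which the representations are crossed — making sure $\sigma$ and $\widehat\sigma$ use the same $\eps$, so that the step $\sigma(x)\widehat\sigma(x)^{-1}\in\crp{L}$ is legitimate, and that $\eps(y)$ (and $H$-conjugation) preserve the order $\Sigma$; both points reduce to the fact that $\Sigma_0$ was constructed from matrix units sitting inside $\Sigma$.
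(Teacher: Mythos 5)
Your proof is correct, and its first half coincides with the paper's own argument: both invoke Lemma~\ref{l:dvrprcr} to produce an integral crossed projective representation $\widehat\sigma\colon H/L\to\Sigma^{*}$ and observe that, because $\sigma$ and $\widehat\sigma$ are crossed with respect to the same $S_0$ (hence use the same twisting $\eps$), the discrepancy $\mu_x=\sigma(x)\widehat\sigma(x)^{-1}$ is central, lying in $\Z(S)\iso\crp{L}$. (Your ``we may assume'' should be read as fixing the intended hypothesis rather than as a reduction: if $\sigma$ were crossed with respect to a conjugate $S_0^u$, it differs from one crossed over $S_0$ by the factors $u^{-1}u^{\eps(x)}$, which need not be integral, so the statement really is relative to the $S_0$ fixed in this section; the paper's proof makes the same implicit assumption when it writes $\widehat\sigma(h)=\lambda\sigma(h)$ with $\lambda\in\Z(S)$.) Where you genuinely diverge is the finishing step. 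The paper argues one element at a time: since $B$ is a valuation ring, $\mu_x$ or $\mu_x^{-1}$ lies in $B$, and then multiplicativity of $\sigma$ together with the finite order of $x$ forces $\sigma(x)\in\Sigma$; this uses nothing beyond integral closedness of $A$ (whence the remark after the lemma that completeness, and even the valuation, are not needed). You instead compare the cocycle $\alpha\in Z^{2}(H/L,B^{*})$ of $\widehat\sigma$ with the trivial cocycle of $\sigma$, obtaining $\alpha(x,y)=\mu_{xy}(\mu_x^{\,y})^{-1}\mu_y^{-1}$, and then apply the ($H$-invariant) valuation: $x\mapsto\nu(\mu_x)$ is a homomorphism from the finite group $H/L$ into the torsion-free group $\ints$, hence zero, so every $\mu_x\in B^{*}$. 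This global argument is clean, gives the marginally sharper conclusion $\sigma(H/L)\subseteq\Sigma^{*}$, and likewise avoids completeness; its only cost is that it is tied to a valuation with torsion-free value group, whereas the paper's element-wise dichotomy transfers verbatim to any integrally closed coefficient ring $A$ with $i\in AK$.
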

\begin{proof}
   By Lemma~\ref{l:dvrprcr},
   there also exists a projective crossed representation
   $\widehat{\sigma}\colon H/L\to \Sigma$.
   For $h\in H$, there is $\lambda\in \Z(S)\iso\crp{L}$ such that
   $\widehat{\sigma}(h)=\lambda\sigma(h)$, and thus
  $\lambda\sigma(h)\in \Sigma^{*}$.
  This means that also
  $(\lambda\sigma(h))^{-1} =\lambda^{-1}\sigma(h^{-1})\in \Sigma^{*}$.
  As $\lambda$ or $\lambda^{-1}$ is in $\Z(\Sigma)\iso B$, it follows that
  $\sigma(h^{-1})$ or $\sigma(h)$ is in $\Sigma$.
  But as $\sigma$ is a crossed representation and $h$ has finite order,
  both are in $\Sigma$. Thus $\sigma(H) \subseteq \Sigma$ as
  claimed.
\end{proof}
\begin{remark*}
  Completeness of $\crp{K}$ was not used in the previous proof,
  and in fact the lemma is true for any ring $A$ integrally closed
  in its quotient field, and such that $i\in AK$.
  This follows because such a ring is the intersection of all the
  valuation rings containing it.
\end{remark*}
\begin{lemma}
  Keep the notation above and assume that
  $\sigma\colon H/L \to S$ is magic.
  Then the homomorphism
  $\kappa$ of Theorem~\ref{t:c-isosemi} maps
  $AHe_{\phi}$ onto $ \Gamma$.
\end{lemma}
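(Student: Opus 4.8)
Write $e=e_{(\phi,\crp{K})}$ (the idempotent denoted $e_\phi$ in the statement), $M=\kappa(AHe)$, and write $\overline{\phantom{x}}$ for reduction modulo $\pi$. The plan is to prove $M\subseteq\Gamma$ directly and then $\Gamma\subseteq M$ by a Nakayama argument, after reducing everything modulo $\pi$ and invoking Theorem~\ref{t:c-isosemi} once more for the reduced configuration.

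For $M\subseteq\Gamma$: by Lemma~\ref{l:sigminSigm} we have $\sigma(H/L)\subseteq\Sigma=(iAKi)^L$, so for $h\in H$ the element $c_h=h\sigma(Lh)^{-1}=\kappa(h)$ lies in $iAGi$ (using $i^h=i$), and it lies in $C=\C_{i\crp{K}Gi}(S_0)$ by Theorem~\ref{t:c-isosemi}; hence $c_h\in AG\cap C=\Gamma$. Since $\Gamma$ is an $A$-subalgebra of $iAGi$, $e\in AL$, and $\kappa(e)=i=1_C\in\Gamma$, this gives $M\subseteq\Gamma$. Next I would record that $M$ and $\Gamma$ are finitely generated $A$-modules ($AG$ is $A$-finite, $A$ noetherian) and full $A$-lattices in $C$: $\kappa$ carries $\crp{K}He$ isomorphically onto $C$, so $\crp{K}M=C$, and clearing denominators shows $\crp{K}\Gamma=C$ as well. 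Moreover $\Gamma\cap\pi(iAGi)=\pi\Gamma$ (if $x\in\Gamma$ and $x=\pi y$ with $y\in iAGi$, then $y=\pi^{-1}x\in C$, so $y\in AG\cap C=\Gamma$), so reduction identifies $\Gamma/\pi\Gamma$ with the image of $\Gamma$ in $\overline{i}\crp{F}G\overline{i}$. By Nakayama's lemma it therefore suffices to show that $M$ and $\Gamma$ have the same image in $\overline{i}\crp{F}G\overline{i}$.

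To compute these images I would reduce $\sigma$ modulo $\pi$. Because $\theta,\phi$ have $p$-defect zero and $\crp{L}/\crp{K}$ is unramified (so $\Gal(\crp{L}/\crp{K})\iso\Gal(\crp{E}/\crp{F})$ and $B_0/B_0\pi$ is the $H$-fixed subfield of $\crp{E}$), the reduced data $\overline{\theta},\overline{\phi},\overline{i},\overline{\Sigma_0}$ form a configuration satisfying Hypothesis~\ref{h:bconf2} over $\crp{F}$, with $\overline{\Sigma_0}\iso\mat_n(B_0/B_0\pi)$ in the role of $S_0$; and $\overline{\sigma}\colon H/L\to(\overline{i}\crp{F}K\overline{i})^L=\overline{\Sigma}$, $x\mapsto\overline{\sigma(x)}$, is a magic crossed representation for it (multiplicativity, the conjugation identities, and $\overline{\sigma}(1)=\overline{i}=1_{\overline{\Sigma}}$ all reduce from their counterparts over $\crp{K}$). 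Applying Theorem~\ref{t:c-isosemi} to $\overline{\sigma}$ yields an isomorphism $\overline{\kappa}\colon\crp{F}He_{(\overline{\phi},\crp{F})}\to\overline{C}:=\C_{\overline{i}\crp{F}G\overline{i}}(\overline{\Sigma_0})$, $h\mapsto h\overline{\sigma}(Lh)^{-1}$. Since $\overline{\kappa(h)}=\overline{\kappa}(h)$ and $\overline{e}=e_{(\overline{\phi},\crp{F})}$ (again by $p$-defect zero), the image of $M$ in $\overline{i}\crp{F}G\overline{i}$ is $\overline{\kappa}(\crp{F}He_{(\overline{\phi},\crp{F})})=\overline{C}$. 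On the other hand, $iAGi\iso\mat_n(\Gamma)$ via a full set of matrix units contained in $\Sigma_0$; reducing, those matrix units exhibit $\overline{i}\crp{F}G\overline{i}\iso\mat_n(\overline{\Gamma})$, so by the results of Section~\ref{sec:cssa} the image of $\Gamma$ equals $\C_{\overline{i}\crp{F}G\overline{i}}(\overline{\Sigma_0})=\overline{C}$ as well. Thus the two images coincide, hence $M+\pi\Gamma=\Gamma$, and Nakayama gives $M=\Gamma$.

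The step I expect to be the main obstacle is the middle of the third paragraph: verifying carefully that the reduced configuration genuinely satisfies Hypothesis~\ref{h:bconf2} over $\crp{F}$ with $\overline{\Sigma_0}$ as the distinguished central simple subalgebra — that $\overline{\theta},\overline{\phi}$ remain irreducible with $\crp{F}(\overline{\theta})=\crp{F}(\overline{\phi})$ and semi-invariant in $H$, that $\overline{i}$ and $\overline{\Sigma_0}$ are the ``correct'' objects (here Lemma~\ref{l:matrb} already does most of the work), that $B_0/B_0\pi=\crp{E}^H=\Z(\overline{\Sigma})^H$, and that the action $\overline{\eps}$ matches the $\eps$ of the reduced configuration. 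All of this is bookkeeping grounded in the two standing facts of Hypothesis~\ref{h:bconf2_dvr} — $p$-defect zero (which preserves irreducibility and primitivity of the relevant idempotents under reduction) and the unramifiedness of $\crp{L}/\crp{K}$ (which transports Galois groups and fixed subfields) — and everything else is routine.
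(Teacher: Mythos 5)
Your proposal is correct, but it takes a genuinely different route from the paper for the surjectivity half. The first step coincides: Lemma~\ref{l:sigminSigm} gives $\sigma(H/L)\subseteq\Sigma$, hence $\kappa(AHe_{(\phi,\crp{K})})\subseteq AG\cap C=\Gamma$. From there the paper stays in characteristic zero: as in the proof of Theorem~\ref{t:c-isosemi} it decomposes $\Gamma=\bigoplus_{t}\Gamma_1 c_t$ over coset representatives $t$ of $L$ in $H$, with $\Gamma_1=\Gamma\cap AK=\crp{K}Li\cap iAKi$, so everything reduces to the degree-$K$ component, where $ALi\iso ALe_{(\phi,\crp{K})}$ is a maximal $A$-order of $\crp{K}Li$ (this is where defect zero enters for the paper) and therefore equals the order $\Gamma_1$ containing it. You instead pass to the residue field: both $M=\kappa(AHe_{(\phi,\crp{K})})$ and $\Gamma$ are full $A$-lattices in $C$ with $\Gamma\cap\pi\, iAGi=\pi\Gamma$; you identify $\overline{M}$ with $\C_{\overline{i}\crp{F}G\overline{i}}(\overline{\Sigma_0})$ by reducing $\sigma$ and reapplying Theorem~\ref{t:c-isosemi} over $\crp{F}$, identify $\overline{\Gamma}$ with the same centralizer via the matrix units of $\Sigma_0$ (the coefficient formula $c_{jk}=\sum_l E_{lj}xE_{kl}$ from Section~\ref{sec:cssa} makes the compatibility with reduction immediate), and finish with Nakayama. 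The trade-off: the paper's argument is shorter and needs only the maximal-order fact, while yours spends its effort on the reduction bookkeeping you flag, namely that $\overline{\theta}$, $\overline{\phi}$, $\overline{i}$, $\overline{\Sigma_0}$, $\overline{\sigma}$ form a Hypothesis~\ref{h:bconf2} configuration over $\crp{F}$ with a magic crossed representation, and that $\overline{e_{(\phi,\crp{K})}}=e_{(\overline{\phi},\crp{F})}$. That bookkeeping is genuine but unproblematic (for instance, primitivity of $\overline{e_{(\phi,\crp{K})}}$ in $\Z(\crp{F}L)$ follows because distinct Galois conjugates of $e_{\phi}$ reduce to distinct block idempotents; everything else rests on defect zero and unramifiedness), and the paper itself presupposes exactly these identifications in Theorem~\ref{t:dvr2} and in Section~\ref{sec:aboveglaub}, so your argument is sound within the paper's framework. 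It has the side benefit of making explicit that the $p$-adic and mod-$p$ maps $\kappa$ are compatible under reduction, which is implicitly what makes the Morita equivalence over $A$ induce the one over $\crp{F}$.
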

\begin{proof}
  From Lemma~\ref{l:sigminSigm} it follows that
  $c_h = h\sigma(Lh)^{-1}\in C\cap AG= \Gamma$.
  Therefore $(AHe_{(\phi,\crp{K})})\kappa \subseteq \Gamma$.
  As in the proof of Theorem~\ref{t:c-isosemi}, we see that
  $\Gamma = \bigoplus_{t\in T}\Gamma_1 c_t$, where
  $\Gamma_1 = \Gamma\cap AK$ and $T$ is a set of representative
  of the cosets of $L$ in $H$.
  Thus it suffices to show that
  $(ALe_{(\phi,\crp{K})}){\kappa}= \Gamma_1$.
  We know that $(ALe_{(\phi,\crp{K})}){\kappa}= ALi$ and that
  $\Gamma_1 = \crp{K}Li \cap iAKi$.
  Clearly $ALi\subseteq \Gamma_1$.
  But since
  $ALi \iso ALe_{(\phi,\crp{K})}\iso \mat_n(B)$, the subring
  $ALi$ must be a maximal
  $A$\nbd order of
  $\crp{K}Li\iso\mat_n(B)$~\citep[Theorem~8.7]{reinerMO}.
  Since $\Gamma_1\subseteq iAKi$ is an $A$\nbd order,
  the proof now follows.
\end{proof}
From what we have done so far it follows that,
if a magic crossed representation
$\sigma \colon H/L \to S=(i\crp{K}K i)^L$ exists
in the situation of Hypothesis~\ref{h:bconf2_dvr},
then
$AHe_{\phi}$ and $AGe_{(\theta, \crp{K} ) }$ are
Morita equivalent,
and the equivalence induces the character bijection of
Theorem~\ref{t:semicorrchars}.
The equivalence can be described more concretely:
First, choose an primitive idempotent
$j\in \Sigma = (iAKi)^L$.
Then we have $jAGj = jiAGij \iso \Gamma \iso AHe_{\phi}$, where
an isomorphism from $AHe_{\phi}$ onto $jAGj$ is induced by the map
sending $h\in H$ to
$jh\sigma(Lh)^{-1}= h\sigma(Lh)^{-1}j = j h\sigma(Lh)^{-1} j$.
Also we have
$ i = 1_S \in \Sigma j \Sigma $
and $e_{(\theta, \crp{K} ) } \in AG i AG = AGe_{(\theta, \crp{K} ) }$, so that
$AG j AG = AGe_{(\theta, \crp{K} ) }$.
The idempotent $j$ is thus full in $AGe_{(\theta, \crp{K} ) }$ and we have
a Morita\index{Morita equivalence} equivalence between $AGe_{(\theta, \crp{K} ) }$ and
$jAGj=jAGe_{(\theta, \crp{K} ) }j$ sending
an $AGe_{(\theta, \crp{K} ) }$-module $V$ to $Vj$ and an
$jAGj$\nbd module
$U$ to $U\tensor_{jAGj} jAG$~\citep[Example~18.30]{lamMR}.
Since $jAGj\iso AHe_{\phi}$, this gives also an Morita\index{Morita equivalence} equivalence
between $AGe_{(\theta, \crp{K} ) }$ and $AHe_{\phi}$.
We now have proved:
\begin{thm}\label{t:dvr1}
  Assume Hypothesis~\ref{h:bconf2_dvr} and let $\sigma\colon H/L \to S$
  be a magic representation.
  Then there is an idempotent $j\in (iAKi)^L= \Sigma$ such that
  $AHe_{(\phi,\crp{K})}\iso j AG j$ via the map defined by
  $h\mapsto j h\sigma(Lh)^{-1}$,
  and $AG j AG = AGe_{(\theta, \crp{K} ) }$.
  The rings $AHe_{(\phi,\crp{K})}$ and $AGe_{(\theta, \crp{K} ) }$ are
  Morita equivalent.
\end{thm}
\begin{remark*}
 The Morita equivalence is graded in the sense
 of~\citep{marcusGGA, marcus08}.
\end{remark*}

\section{Above the Glauberman correspondence}
\label{sec:aboveglaub}
In this section, we need some notation and results from the theory
of $G$-algebras, which we review shortly.
Let $A$ be an algebra, and suppose the group $G$ acts on $A$.
(Shortly, $A$ is a $G$\nbd algebra.)
Remember that for subgroups $Q\leq P\leq G$ we have a trace map
$\T_Q^P\colon A^Q \to A^P$ given by
$\T_Q^P(a)= \sum_{x\in [P:Q]}a^x$
(where $[P:Q]$ denotes a set of representatives
 of the left cosets of $Q$ in $P$),
and that
$A_Q^P:=\T_Q^P(A^Q)$ is an ideal of $A^P$~\citep[{\S}~11]{thevGA}.

Now let $A$ be an algebra over a field $\crp{E}$ of characteristic
$p>0$.
We write $A(P)$ for the factor algebra
$A^P/ (A^P_{<P})$, where
$A^P_{<P}$ denotes the ideal
$\sum_{Q<P} A_Q^P$, the sum running over proper subgroups of
$P$.
The canonical homomorphism $\br_P=\br_P^A\colon A^P\to A(P)$ is
called the Brauer homomorphism.
If $A(P)\neq 0$, then $P$ is a $p$\nbd group.

In the special case where $A=\crp{E}K$ and $P$ acts on the group
$K$, we may (and do) identify $A(P)$ with $\crp{E}\C_K(P)$; the Brauer
homomorphism becomes the usual projection
$(\crp{E}K)^P\to \crp{E}\C_K(P)$.

If $e\in A^G$ is a primitive idempotent, then
the minimal subgroups among the subgroups $D\leq G$ for which
$e\in A_D^G$, are conjugate in $G$ and are called
\emph{defect groups} of $e$.
Defect groups are also characterized as maximal subgroups subject
to $\br_D(e)\neq 0$~\citep[{\S}~18]{thevGA}.

We describe now the situation we study in this section.
(It is the same situation as studied by Dade~\citep{dade80}.)
Let $G$ be a finite group with normal subgroups
$K \leq M$ such that $M/K$ is a $p$\nbd group.
Let $\theta\in \Irr K$ be invariant in $M$ and
semi-invariant in $G$ and suppose that
$\theta$ has $p$-defect zero.
Observe that then the coefficients of the central idempotent
$e_{\theta}\in \rats(\theta)G$ are contained in the valuation ring
$\ints_p[\theta]$ (which is unramified over $\ints_p$).
Let $\crp{E}=\ints_p[\theta]/p\ints_p[\theta]$ be the residue
class field.
Write $\overline{e_{\theta}}$ for the image of
$e_{\theta}$ in $\crp{E}K$ under the homomorphism
$\ints_p[\theta]K \to\crp{E}K$.
Then
$\overline{e_{\theta}}\in \Z(\crp{E}K)$
is a block idempotent of $\crp{E}K$,
and we have
$\overline{e_{\theta}} = \T_1^K(a)$ for some
$a\in \crp{E}K\overline{e_{\theta}}$, since $\theta$ has
$p$\nbd defect zero.

One may view $\crp{E}K$ as an $M$\nbd algebra.
Let $P\leq M$ be a defect group of the idempotent
$\overline{e_{\theta}} \in (\crp{E}K)^M$.
Then $M=KP$ and $K\cap P=1$.
Set $H=\N_G(P)$. Since $\theta$ is semi-invariant in $G$ and
Galois conjugate blocks have the same defect groups, it follows
from the Frattini argument that $G=HK$.
Set $L=H\cap K$, so that $L= \C_K(P)$.

Let $\beta=\br_P^{\crp{E}K}\colon (\crp{E}K)^P\to \crp{E}L$ be the
Brauer homomorphism.
It induces a bijection between the primitive idempotents
in $(\crp{E}K)^M = \Z(\crp{E}K)^P$ with defect $P$, and the
primitive idempotents of $(\crp{E}L)^{\N_M(P)}$ with defect $P$.
Since $\N_M(P)=LP$, the last idempotents are the block idempotents of
$L$ with defect group $1$.

In particular, to $\theta$ corresponds a character
$\phi\in \Irr L$ of defect zero.
(When $K$ is a $p'$\nbd group, then
 this is the Glauberman correspondent
 of $\theta$.
 See also~\citep[{\S}5.12]{ntRFG}.)
The correspondence commutes with Galois automorphisms.
Thus we are in the situation of
Hypothesis~\ref{h:bconf2_dvr}
with $\crp{K}=\rats_p$, the field of
$p$\nbd adic numbers, and
$\crp{F}=\GF{p}$, the prime field with $p$ elements.
Let $e= \overline{e_{ (\theta,\rats_p) }}$ and
$f=\overline{ e_{ (\phi , \rats_p) } }$ be the central primitive
idempotents of $\crp{F}_p K$ and $\crp{F}_pL$ corresponding to
the blocks of $\theta$ and $\phi$ over the prime field $\crp{F}_p$.
(Remember that
$\Gal( \rats_p(\theta)/\rats_p)\iso \Gal(\crp{E}/\GF{p})$
canonically in this situation.)
In this section, we set
\[ i  = \sum_{\gamma\in \Gal(\crp{E} /\GF{p})}
        (\overline{e_{{\theta}}e_{{\phi}}})^{\gamma}.
  \]

\begin{thm}\label{t:glaubmag}
  In the situation just described, there is a magic crossed
  representation $\sigma\colon H/L\to
  (i\crp{F}_pK i)^L$.
  Moreover, $\sigma$ can be chosen such that
  $\br_P(\sigma(h))= f$ for all
  $h\in \C_G(P)$.
\end{thm}
We list some corollaries.
\begin{cor}\label{c:glaubbr}
  $\crp{F}_p Ge$ and\/ $\crp{F}_p Hf$ are ($H/L$-graded) Morita
  equivalent.
  Any block ideal of\/ $\crp{F}_p Ge$ is Morita equivalent to
  its Brauer correspondent with respect to $P$.
\end{cor}
\begin{proof}
  The first assertion follows from Corollary~\ref{c:corrsemi}.

   Now set $S= (i\crp{F}_p K i)^L$
  and remember that the magic crossed representation $\sigma$
  (with respect to $\mat_n(\crp{F}_p)\iso S_0 \subseteq S$)
  yields an
  isomorphism
  \[ \kappa\colon \crp{F}_p H f \to
      \C_{i\crp{F}_p G i}(S_0),
      \quad
      h \mapsto h \sigma(h)^{-1}.
    \]
  This restricts to an isomorphism
  from $\Z(\crp{F}_p H f)$ to
  $\Z(i\crp{F}_p G i )$.
  Furthermore, the map $z\mapsto zi$ is an isomorphism
  between $\Z(\crp{F}_p Ge)$ and $\Z(i\crp{F}_p G
  i)$.
  \[ \begin{CD}
      \Z(\crp{F}_p G e)
        @>{\cdot i}>>
       \Z(i\crp{F}_p G i)
        @<\kappa <<
       \Z(\crp{F}_p H f)
     \end{CD}.
  \]
  These isomorphisms yield a bijection between the
  block idempotents of $\crp{F}_p G e$ and $\crp{F}_p H f$.
  Let $b\in \Z(\crp{F}_p G e)$ and $c\in \Z(\crp{F}_p H f)$ be
  block idempotents that correspond under these isomorphisms, that
  is, we have $bi = c{\kappa}$.
  The Morita equivalence that
  comes from the magic crossed representation $\sigma$ then yields
  a Morita equivalence between the blocks
  $\crp{F}_p G b$ and $\crp{F}_p H c$.
  Thus we need to show that
  $c$ is the Brauer correspondent of $b$
  for an appropriate choice of $\sigma$.
  An appropriate choice is any $\sigma$ such
  that $\br_P(\sigma(h))= f$ for $h\in \C_G(P)$.

  Write $c= \sum_{h\in H} c_h h$ and observe that
  $c_h= 0$ for
  $h \notin \C_H(P)$~\citep[Theorem~5.2.8(ii)]{ntRFG},
  since $P$ is a normal
  $p$\nbd subgroup of $H$.
  We compute
  \begin{align*}
    \br_P(b)
      = \br_P(be)
      = \br_P(b)f
     &= \br_P(bi)
      = \br_P( c\kappa )
      \\
     &= \br_P\left( \sum_{h\in \C_G(P)} c_h h\sigma(h)^{-1} \right)
     \\
     &= \sum_{h\in \C_G(P)} c_h h \br_P(\sigma(h)^{-1})
     \\
     &= \sum_{h\in \C_G(P)} c_h h \cdot f
      = c,
  \end{align*}
  as was to be shown.
\end{proof}
\begin{cor}\label{c:glaub}
  There is a bijection $\iota$ between
  $\Irr(G\mid e_{ (\theta, \rats_p) })$ and
  $\Irr(H\mid e_{ (\phi, \rats_p) })$
  with the properties given in Theorem~\ref{t:corr}.
  In particular, the bijection $\iota$ commutes with field automorphisms over $\rats_p$
  and preserves Schur indices over $\rats_p$.
  Corresponding characters lie in Brauer corresponding blocks
  (with respect to $P$).
\end{cor}
This corollary is a slight generalization of one of the main results of Turull's
paper~\citep{turull08c}. In fact, this section gives a somewhat
different proof of Turull's result, but not completely
independent.
In particular, we need a fact about endopermutation modules
depending on Dade's classification of
these objects~\citep{dade78a, dade78b}.
This fact was announced by Dade~\citep{dade80}, but without proof.
A rather sketchy proof was published by Puig~\citep{puig86}.
We use the complete exposition with detailed proofs given by
Turull~\citep[Section~3]{turull08c}.

Our aim in this section is to show that Turull's result can be
derived naturally from the theory developed here.
We do not need Turull's theory of the Brauer-Clifford group~\citep{turull09}.
Also note that Turull assumes that $K$ is a
$p'$\nbd group,
and that Properties~\ref{i:c_res} and~\ref{i:c_ind}
in Theorem~\ref{t:corr}  and the connection with the
Brauer correspondence give
additional information.
\begin{proof}[Proof of Corollary~\ref{c:glaub}]
  We will show below (Lemma~\ref{l:sdade}) that
  \[\dim_{\crp{E}}(i\crp{F}_p Ki)^L\equiv 1\mod p.
  \]
  Thus $n=(\theta_L, \phi)\equiv \pm 1\mod p$.
  (Of course, this is a well known property of
   the Glauberman correspondence, cf.~\citep[\S 5.12]{ntRFG}.)
  By Theorem~\ref{t:dvr2}, the magic crossed representation $\sigma$ lifts
  to a magic representation in characteristic zero.
  The result follows from Theorem~\ref{t:semicorrchars}.
\end{proof}

To prove Theorem~\ref{t:glaubmag}, we need some facts about
endopermutation modules and Dade $P$\nbd algebras.

For the moment, let $P$ be a $p$-group and $\crp{E}$ an arbitrary field of characteristic $p$.
A permutation $\crp{E}P$\nbd module is a module with an
$\crp{E}$\nbd basis that is permuted by $P$.
(Such a basis is called \emph{$P$\nbd stable}.)
A permutation $P$\nbd algebra is
a $P$\nbd algebra $A$
that has a $P$\nbd stable basis.
An $\crp{E}P$\nbd module $V$ such that
$\enmo_{\crp{E}}V$ is a permutation $P$\nbd algebra, is called
an endopermutation module.
A Dade $P$-algebra over $\crp{E}$
is a permutation $P$\nbd algebra $S$ that is central simple over $\crp{E}$,
and such that $S(P)\neq 0$.
Equivalently, the central simple algebra $S$
has a $P$\nbd stable basis $B$ containing $1_S$.
This means that $S$ viewed as an $\crp{E}P$-module is a permutation
module containing the trivial module $\crp{E}$ as direct summand.

We need some properties of permutation modules and
Dade $P$\nbd algebras.
First some elementary facts:
\begin{lemma}\label{l:permp_basics}\hfill
  \begin{enums}
  \item Every direct summand of a permutation
        $P$\nbd module is itself a permutation $P$\nbd module.
  \item If $A$ and $B$ are permutation $P$\nbd algebras, then
        \[(A\tensor_{\crp{E}}B)(P)\iso A(P)\tensor_{\crp{E}}B(P)
        \]
        canonically.
  \item  If $S$ is a Dade
         $P$\nbd algebra
         and $S\iso\mat_n(\crp{E})$, then
         $S(P)$ is also a matrix ring over $\crp{E}$.
  \item \label{i:simplepalg} If $P$ acts on $\mat_n(\crp{E})$, where $\crp{E}$ is a
        perfect field, then there is a unique group homomorphism
        $\sigma\colon P\to \GL_n(\crp{E})$ such that
        $s^p = s^{\sigma(p)}$ for all $s\in\mat_n(\crp{E})$.
  \end{enums}
\end{lemma}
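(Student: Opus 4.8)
The plan is to treat the four assertions separately; they are essentially independent, and all are elementary except~(iii), which I would let lean on Dade's theory of endopermutation modules.

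For~(i) I would first note that a direct summand of a permutation $\crp{E}P$\nbd module is a $p$\nbd permutation module, and then reduce to the statement that over a $p$\nbd group every indecomposable $p$\nbd permutation module is a transitive permutation module. The key point is that each $\crp{E}[P/Q]=\operatorname{Ind}_Q^P\crp{E}$ with $Q\leq P$ is \emph{indecomposable}: since $\crp{E}$ is the only simple $\crp{E}P$\nbd module, the head of $\crp{E}[P/Q]$ is $\crp{E}^{r}$ with $r=\dim_{\crp{E}}\operatorname{Hom}_{\crp{E}P}(\operatorname{Ind}_Q^P\crp{E},\crp{E})=\dim_{\crp{E}}\operatorname{Hom}_{\crp{E}Q}(\crp{E},\crp{E})=1$ by Frobenius reciprocity, and a module with simple head is indecomposable. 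Hence a permutation module is a direct sum of indecomposables of the form $\crp{E}[P/Q]$, and by the Krull--Schmidt theorem so is any of its direct summands; such a direct sum is a permutation module.

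For~(ii) I expect a short, formal argument. Because $A^P_{<P}$ is an ideal of $A^P$, the Brauer map $\br_P\colon A^P\to A(P)$ is an algebra homomorphism, and from $\T_Q^P(a)\tensor b=\T_Q^P(a\tensor b)$ for $a\in A^Q$ and $b\in B^P\subseteq B^Q$ one sees that $A^P_{<P}\tensor B^P+A^P\tensor B^P_{<P}$ lands in $(A\tensor_{\crp{E}}B)^P_{<P}$, so there is a canonical \emph{algebra} homomorphism $A(P)\tensor_{\crp{E}}B(P)\to(A\tensor_{\crp{E}}B)(P)$ with $\bar a\tensor\bar b\mapsto\overline{a\tensor b}$. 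I would check bijectivity on $P$\nbd stable bases: if $X$, $Y$ are $P$\nbd stable bases of $A$, $B$ then $X\times Y$ is one for $A\tensor_{\crp{E}}B$; for a permutation $\crp{E}P$\nbd module the Brauer quotient has, as an $\crp{E}$\nbd basis, the images of the $P$\nbd fixed basis elements (the orbit sums of the non-trivial $P$\nbd orbits spanning $A^P_{<P}$), and since $(X\times Y)^P=X^P\times Y^P$ the map above carries the basis $\{\bar x\tensor\bar y\}$ onto the basis $\{\overline{x\tensor y}\}$. Being canonical and bijective, it is the asserted isomorphism.

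For~(iv) I would use the Skolem--Noether theorem~\citep[Theorem~3.14]{farbNA} to turn the $P$\nbd action on $\mat_n(\crp{E})$ into a homomorphism $\bar\sigma\colon P\to\operatorname{PGL}_n(\crp{E})$, so that the obstruction to lifting $\bar\sigma$ along $\GL_n(\crp{E})\to\operatorname{PGL}_n(\crp{E})$ is a class $\omega\in H^2(P,\crp{E}^{*})$ (with trivial $P$\nbd action on $\crp{E}^{*}$). Here perfectness of $\crp{E}$ enters: the Frobenius is surjective, so $\crp{E}^{*}$ is $p$\nbd divisible, and $x^{p}=1$ forces $x=1$, so $\crp{E}^{*}$ has no $p$\nbd torsion; hence multiplication by $p$ is bijective on $\crp{E}^{*}$, and therefore on $H^2(P,\crp{E}^{*})$, which is also killed by $\abs{P}$, a $p$\nbd power. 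Thus $H^2(P,\crp{E}^{*})=0$, $\omega=0$, and a lift $\sigma$ exists with $s^{g}=s^{\sigma(g)}$ for all $s$. Uniqueness follows since two lifts differ by a homomorphism $P\to\crp{E}^{*}=\Z(\GL_n(\crp{E}))$, and $\operatorname{Hom}(P,\crp{E}^{*})=1$ as $P$ is a $p$\nbd group with $\crp{E}^{*}$ having no $p$\nbd torsion. For~(iii) I would build on~(iv): write $S\iso\enmo_{\crp{E}}(V)$ with $V=\crp{E}^{n}$ an $\crp{E}P$\nbd module via $\sigma$; since $S$ is a permutation $P$\nbd algebra, so is each summand $\enmo_{\crp{E}}(V_j)$ of $S$, hence by~(i) a permutation module, i.e.\ $V$ is an endopermutation module. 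Writing $V=\bigoplus_jV_j^{m_j}$ with $V_j$ pairwise non-isomorphic indecomposable, one has $S(P)=\bigoplus_{j,k}\operatorname{Hom}_{\crp{E}}(V_j^{m_j},V_k^{m_k})(P)$, and $\operatorname{Hom}_{\crp{E}}(V_j,V_k)\cong V_j^{*}\tensor_{\crp{E}}V_k$ has zero Brauer quotient as soon as $V_j$ or $V_k$ has a vertex that is a proper subgroup of $P$ (it is then relatively projective with respect to some proper subgroup). Since $S$ is a Dade $P$\nbd algebra, $S(P)\neq 0$, so some $V_j$ has vertex $P$; here I would invoke Dade's classification~\citep{dade78a, dade78b} (as exposited in~\citep[Section~3]{turull08c}): up to isomorphism there is a unique such cap $V_0$, it is absolutely indecomposable, and the remaining summands do not affect the Brauer quotient, so by~(i) and~(ii) $S(P)\iso\mat_{m_0}\bigl(\enmo_{\crp{E}}(V_0)(P)\bigr)\iso\mat_{m_0}(\crp{E})$, where $m_0$ is the multiplicity of $V_0$ in $V$. (When $\crp{E}$ is finite, as in the applications of this lemma --- in particular in Theorem~\ref{t:glaubmag} --- one may instead simply note that $S(P)$ is central simple over $\crp{E}$ and that $\operatorname{Br}(\crp{E})=0$.) The only genuinely non-elementary ingredient is this appeal to Dade's theory in~(iii), specifically the uniqueness and absolute indecomposability of the cap; everything else, including the existence of the lift in~(iv) whose heart is the vanishing $H^2(P,\crp{E}^{*})=0$, is routine.
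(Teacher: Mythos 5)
Your arguments for (i), (ii) and (iv) are correct, and they are essentially the standard proofs of the facts that the paper simply cites from Th\'{e}venaz (the paper's entire proof is the reference \citep[Corollary~27.2, Proposition~28.3, Theorem~28.6(a), Corollary~21.4]{thevGA}); in particular, the vanishing $H^2(P,\crp{E}^{*})=0$ via unique $p$-divisibility of $\crp{E}^{*}$ is exactly where perfectness enters in (iv), and the Krull--Schmidt argument in (i) and the fixed-basis argument in (ii) are the expected ones.

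Part (iii), however, has a genuine gap. After reducing to the cap you assert $S(P)\iso\mat_{m_0}\bigl(\enmo_{\crp{E}}(V_0)(P)\bigr)\iso\mat_{m_0}(\crp{E})$, and the second isomorphism needs $\enmo_{\crp{E}}(V_0)(P)\iso\crp{E}$, i.e.\ that the trivial module occurs \emph{exactly once} as a direct summand of the permutation module $\enmo_{\crp{E}}(V_0)$. The ingredients you name -- uniqueness of the cap and its absolute indecomposability -- do not deliver this: absolute indecomposability only says that $\enmo_{\crp{E}P}(V_0)$ is local with residue field $\crp{E}$, and $\enmo_{\crp{E}}(V_0)(P)$ is a quotient of that local ring, hence local with residue field $\crp{E}$, but a priori it could be non-reduced of dimension $>1$. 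This multiplicity-one (equivalently, simplicity of the Brauer quotient of a Dade $P$-algebra) is precisely the non-formal content of the result the paper cites, \citep[Theorem~28.6(a)]{thevGA}, so as written your proof of (iii) is either incomplete or circular; the parenthetical fallback for finite $\crp{E}$ has the same problem, since it presupposes that $S(P)$ is central simple. There is also a citation mismatch: (iii) is stated for an arbitrary $p$-group $P$, while Dade's classification \citep{dade78a,dade78b} and Turull's Section~3 concern abelian $P$; what is actually needed (uniqueness of the cap and the multiplicity-one statement) belongs to the general theory of capped endopermutation modules, which is developed without the classification -- indeed the paper deliberately separates this lemma from the later results, beginning with Lemma~\ref{l:turull33}, that do depend on the classification. (A minor further point: your route to (iii) through (iv) tacitly assumes $\crp{E}$ perfect, which (iii) does not state, although it holds in every application in the paper.)
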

\begin{proof}
  \citep[Corollary~27.2, Proposition~28.3, Theorem~28.6(a), Corollary~21.4]{thevGA}.
\end{proof}
The proofs of the next  results depend on Dade's classification of
endopermutation modules for
abelian $p$-groups~\citep{dade78a, dade78b}.
\begin{lemma}\label{l:turull33}
  Let $P$ be an abelian $p$\nbd group, let
  $\crp{F}\leq \crp{E}$ be finite fields
  and
  $V$ an endopermutation $\crp{E}P$\nbd module.
  Then there is an
        endopermutation
        $\crp{F}P$\nbd module $V_0$ such that
        $V\iso V_0\tensor_{\crp{F}}\crp{E}$.
\end{lemma}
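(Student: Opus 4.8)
The plan is to induct on $\abs{P}$, the key external input being Dade's classification of endopermutation modules for abelian $p$-groups \citep{dade78a, dade78b}, which I use in two ways: (i) for abelian $P$, every endopermutation $\crp{E}P$-module $V$ is isomorphic to each of its Galois twists $V^{\gamma}$, $\gamma\in\Gal(\crp{E}/\crp{F})$ (the isomorphism invariants of $V$ are rational over the prime field $\GF{p}$); and (ii) the group of equivalence classes of capped endopermutation $\crp{E}P$-modules (two such modules being equivalent when their caps are isomorphic) is generated by the classes of the relative syzygies $\Omega_{P/Q}$, $Q\leq P$, each defined over $\GF{p}$, so this group agrees with its analogue over any subfield of $\crp{E}$ containing $\GF{p}$. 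As a preliminary reduction, every indecomposable direct summand of an endopermutation $\crp{E}P$-module is again endopermutation --- its endomorphism algebra is a direct summand, as an $\crp{E}P$-module, of the permutation $P$-algebra $\enmo_{\crp{E}}(V)$, hence itself a permutation module by Lemma~\ref{l:permp_basics} --- so by the Krull--Schmidt theorem it suffices to realise a single indecomposable $V$ over $\crp{F}$; the base case $\abs{P}=1$ is trivial.

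The second ingredient is a descent principle that uses essentially that $\crp{F}$ is \emph{finite}: if $V$ is an indecomposable $\crp{E}P$-module with $V\iso V^{\gamma}$ for all $\gamma\in\Gal(\crp{E}/\crp{F})$ which is a direct summand of a scalar extension $M\tensor_{\crp{F}}\crp{E}$ of some $\crp{F}P$-module $M$, then $V\iso V_{0}\tensor_{\crp{F}}\crp{E}$ for a direct summand $V_{0}$ of $M$. Indeed, write $M=\bigoplus_{k}M_{k}$ with $M_{k}$ indecomposable over $\crp{F}$; as $\crp{F}$ is finite, $\enmo_{\crp{F}P}(M_{k})/\J(\enmo_{\crp{F}P}(M_{k}))$ is a finite field, so $M_{k}\tensor_{\crp{F}}\crp{E}$ is a direct sum of indecomposables forming a single $\Gal(\crp{E}/\crp{F})$-orbit. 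By Krull--Schmidt $V$ is one of these; being Galois-stable, its whole orbit consists of copies of $V$, and since a product of fields cannot be isomorphic to a proper matrix ring over a field, that orbit must be a single isomorphism class. Hence $M_{k}\tensor_{\crp{F}}\crp{E}\iso V$ for the relevant $k$, and $V_{0}=M_{k}$ works.

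It remains to combine these. Suppose first $V$ is indecomposable with vertex $P$. By (ii) its equivalence class is the image of that of some endopermutation $\crp{F}P$-module $W_{0}$, that is, $V$ is isomorphic to the cap (the unique indecomposable summand of vertex $P$) of $W_{0}\tensor_{\crp{F}}\crp{E}$; this cap is a direct summand of the $\crp{F}$-defined module $W_{0}\tensor_{\crp{F}}\crp{E}$ and is Galois-stable because the cap is canonical, so $V$ descends by the principle of the second paragraph. Suppose instead $V$ is indecomposable with vertex a proper subgroup $Q<P$ and let $W$ be an $\crp{E}Q$-source of $V$. Restricting $\enmo_{\crp{E}}(V)$ to $Q$ and using Lemma~\ref{l:permp_basics}, $W$ is a capped endopermutation $\crp{E}Q$-module, so by the inductive hypothesis $W\iso W_{1}\tensor_{\crp{F}}\crp{E}$ for some $\crp{F}Q$-module $W_{1}$; then $V$ is a direct summand of $W{\uparrow}_{Q}^{P}\iso(W_{1}{\uparrow}_{Q}^{P})\tensor_{\crp{F}}\crp{E}$, and $V\iso V^{\gamma}$ for all $\gamma$ by (i), so $V$ descends by the same principle. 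Putting the indecomposable summands back together with the correct multiplicities (Krull--Schmidt) completes the induction. The step I expect to carry the real weight is deducing the Galois-stability (i) from Dade's classification, particularly for the non-capped indecomposables; the descent principle, though it genuinely needs $\crp{F}$ finite, is otherwise routine.
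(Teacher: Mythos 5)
Your outline follows the same underlying route as the source this paper relies on: the paper gives no argument for this lemma at all, but simply cites Turull's Theorem~3.3, whose proof (like yours) deduces the statement from Dade's classification by Galois-descent arguments over finite fields. Your descent principle is correct as stated (the indecomposable summands of $M_k\tensor_{\crp{F}}\crp{E}$ form a multiplicity-free Galois orbit because $\enmo_{\crp{F}P}(M_k)/\J(\enmo_{\crp{F}P}(M_k))$ is a finite field), and your vertex-$P$ case is sound, granted that the generation of the Dade group by relative syzygies is invoked over the finite field $\crp{E}$ itself and not merely over $\alcl{\crp{E}}$ --- supplying exactly this kind of field-of-definition care is what the cited Section~3 of Turull's paper is for. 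The genuine gap is the one you flag yourself: assertion (i), Galois-stability of the indecomposable summands of vertex $Q<P$, is not part of Dade's classification (which speaks only of capped modules, i.e.\ of summands of vertex $P$), yet your second case leans on it essentially, so as written the induction is incomplete.

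The gap is fillable, and the natural repair also makes the descent principle unnecessary in that case. If $V$ is indecomposable endopermutation with vertex $Q<P$ and source $W$, then $W$ is an indecomposable endopermutation $\crp{E}Q$\nbd module with vertex $Q$, so $\br_Q$ maps the local ring $\enmo_{\crp{E}Q}(W)=\enmo_{\crp{E}}(W)^Q$ onto the nonzero algebra $\enmo_{\crp{E}}(W)(Q)$, which is a matrix algebra over $\crp{E}$ by Lemma~\ref{l:permp_basics}; being a quotient of a local ring it is then $\iso\crp{E}$, and its kernel is the radical, so $W$ is \emph{absolutely} indecomposable. Since $\cha\crp{E}=p$ and $P/Q$ is a $p$\nbd group, Green's indecomposability theorem (or the crossed-product argument, using $H^2(P/Q,\crp{E}^{*})=1$) shows $\mathrm{Ind}_Q^P W$ is indecomposable, whence $V\iso \mathrm{Ind}_Q^P W \iso \bigl(\mathrm{Ind}_Q^P W_1\bigr)\tensor_{\crp{F}}\crp{E}$ with $W_1$ supplied by the inductive hypothesis for $Q$; this yields both the descent and the Galois-stability you wanted. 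With that inserted (and the harmless observation, via Noether--Deuring, that the reassembled $\crp{F}$\nbd form $V_0$ is automatically endopermutation because $\enmo_{\crp{F}}(V_0)\tensor_{\crp{F}}\crp{E}$ is a permutation module), your argument becomes a complete proof along the lines of the one the paper outsources.
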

\begin{proof}
  \citep[Theorem~3.3]{turull08c}.
\end{proof}
\begin{cor}\label{c:turull34}
   Let $P$ be an abelian $p$\nbd group, let
  $\crp{F}\leq \crp{E}$ be finite fields
  and
  $V$ an endopermutation $\crp{E}P$\nbd module.
  Write
  $S=\enmo_{\crp{E}}V \subseteq T= \enmo_{\crp{F}} V$.
  Then $S$ and $T$ have $P$-stable bases,
  $S(P)$ embeds naturally in $T(P)$ and
  $\C_T(S)\iso \C_{T(P)}(S(P))\iso \crp{E}$ naturally.
\end{cor}
  That $T$ has a $P$-stable basis means that $V$ is an
  endopermutation module for $P$ over $\crp{F}$, this is
  Corollary~3.4 in~\citep{turull08c}.
\begin{proof}[Proof of Corollary~\ref{c:turull34}]
  By Lemma~\ref{l:turull33}, there is
  an endopermutation module $V_0\leq V_{\crp{F}P}$ such that
  $V\iso V_0\tensor_{\crp{F}}\crp{E}$.
  Set
  \[ S_0 = \{ s\in T \mid V_0s\subseteq V_0\}
       \iso \enmo_{\crp{F}}V_0.
  \]
  Since $V_0$ is an endopermutation module,
  $S_0$ has a $P$-stable basis, $B$.

  Set $Y=\enmo_{\crp{F}}\crp{E}$.
  Then $T\iso S_0 \tensor_{\crp{F}} Y$ as $\crp{F}$\nbd algebra.
  If we let act $P$ trivially on $Y$, this is actually an
  isomorphism of $P$\nbd algebras.
  Taking any $\crp{F}$\nbd basis $C$ of $Y$, we see that
  $\{ b\tensor c\mid b\in B, c\in C\}$ is a
  $P$\nbd stable basis of $T$.

  It is clear that
  $\C_T(S)= \enmo_S V \iso \crp{E}$.
  Let $\lambda(\crp{E})=\enmo_{\crp{E}}\crp{E}\subseteq Y$ and
  note that $\lambda(\crp{E})\iso \crp{E}$ via multiplication.
  We have $\C_{Y}(\lambda(\crp{E}))=\lambda(\crp{E})$.
  The isomorphism $T\iso S_0\tensor Y$ identifies
  $S$ with $ S_0\tensor \lambda(\crp{E})$.

  As $Y(P)= Y$ and $\crp{E}(P)=\crp{E}$, it now follows that
  $T(P)\iso S_0(P)\tensor_{\crp{F}}Y$ and
  $S(P)\iso S_0(P)\tensor_{\crp{F}}\crp{E}$ canonically.
  Thus also $\C_{T(P)}(S(P))\iso \crp{E}$.
\end{proof}
If $S$ is a Dade $P$-algebra, then,
by Lemma~\ref{l:permp_basics}, Part~\ref{i:simplepalg},
there is a unique group homomorphism
$\sigma\colon P\to S$ inducing the action of $P$ on $S$.
Thus the notation  $\N_{S^*}(P)$ is unambiguous, although strictly
speaking we should write $\N_{S^*}(\sigma(P))$.
\begin{lemma}\label{l:endoextsplit}
  Let $P$ be an abelian $p$-group, let $S$ be a Dade $P$-algebra and let
  $\br_P\colon S^P \to S(P)$ be the Brauer homomorphism.
  Then there is a group homomorphism
  $\phi\colon \N_{S^*}(P)\to S(P)^*$ such that
  $\phi$ extends $\br_P$ and
  $\br_P(c)^{\phi(s)}=\br_P(c^s)$ for all
  $s\in \N_{S^*}(P)$ and $c\in S^P$.
\end{lemma}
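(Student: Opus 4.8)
The plan is to move $P$ inside $S^{*}$, then determine $S(P)$ together with the conjugation action of $N:=\N_{S^{*}}(P)$ on it, and finally to extract $\phi$ from the classification of endopermutation modules. For the first step I would use Lemma~\ref{l:permp_basics}, Part~\ref{i:simplepalg}: since $S\iso\mat_{n}(\crp{E})$ there is a unique homomorphism $\sigma\colon P\to S^{*}$ with $s^{x}=s^{\sigma(x)}$ for all $s\in S$, $x\in P$. Replacing $P$ by $\sigma(P)\leq S^{*}$ changes none of the data in the statement: the action of $P$ is conjugation by $\sigma(P)$, so $S^{P}=\C_{S}(\sigma(P))$ and $\N_{S^{*}}(P)=\N_{S^{*}}(\sigma(P))$; and a relative trace $\T_{Q}^{P}$ with $\ker\sigma\nleq Q$ equals $[Q\ker\sigma:Q]\,\T_{Q\ker\sigma}^{P}$, hence vanishes in characteristic $p$, so $S^{P}_{<P}$ — and thus $\br_{P}$ and $S(P)$ — depend only on $\sigma(P)$ and its subgroups. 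So from now on $P\leq S^{*}$, $S^{P}=\C_{S}(P)$, and $N$ is the ordinary normaliser.

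By Lemma~\ref{l:permp_basics} (third part) $S(P)$ is again a split matrix ring, $S(P)\iso\mat_{m}(\crp{E})$, and for a Dade $P$-algebra $S^{P}$ is a primary ring with $S^{P}_{<P}=\J(S^{P})$, so $\br_{P}$ induces an isomorphism $S^{P}/\J(S^{P})\iso S(P)$ (cf.\ \citep[{\S}~27]{thevGA}). In particular $\br_{P}$ carries the unit group $(S^{P})^{*}=\C_{S^{*}}(P)$ \emph{onto} $S(P)^{*}$, with kernel the finite $p$-group $1+\J(S^{P})$. Since $\C_{S}(P)$ and $S^{P}_{<P}$ are stable under conjugation by $N$ (for the latter, $\T_{Q}^{P}(a)^{s}=\T_{Q^{s}}^{P}(a^{s})$), conjugation descends to $\delta\colon N\to\Aut_{\crp{E}}(S(P))$, with $\delta(c)$ equal to conjugation by $\br_{P}(c)$ for $c\in\C_{S^{*}}(P)$; here $N/\C_{S^{*}}(P)$ embeds in $\Aut(P)$, so is finite. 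By Skolem--Noether every $\delta(s)$ is inner, so $\delta$ amounts to a homomorphism $\bar\phi\colon N\to S(P)^{*}/\crp{E}^{*}$, and what must be produced is a lift $\phi\colon N\to S(P)^{*}$ of $\bar\phi$ with $\phi|_{\C_{S^{*}}(P)}=\br_{P}$; given such a $\phi$, the identity $\br_{P}(c)^{\phi(s)}=\br_{P}(c^{s})$ holds automatically because $\phi(s)$ realises $\delta(s)$.

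The main obstacle is this last lift. Taking arbitrary lifts $\phi(n_{i})\in S(P)^{*}$ of $\bar\phi(n_{i})$ on a transversal $(n_{i})$ of $\C_{S^{*}}(P)$ in $N$ and setting $\phi(un_{i})=\br_{P}(u)\phi(n_{i})$, one checks that $\phi$ is a well-defined homomorphism extending $\br_{P}$ precisely when a certain $2$-cocycle $\mu\in Z^{2}\bigl(N/\C_{S^{*}}(P),\crp{E}^{*}\bigr)$ is a coboundary — and this cannot be forced by group theory alone (e.g.\ $H^{2}(\mathbb{Z}/2,\crp{E}^{*})$ need not vanish), so the nature of $S$ has to enter. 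Here $\crp{E}$ is finite, as in the application, and by Dade's classification of endopermutation modules for abelian $p$-groups \citep{dade78a,dade78b} — in the form of Lemma~\ref{l:turull33} and Corollary~\ref{c:turull34}, see also \citep[{\S}~3]{turull08c} — one has $S\iso\enmo_{\crp{E}}(V)$ as $P$-algebras for an endopermutation module $V$, with $S(P)$ the endomorphism algebra of the Brauer quotient of $V$. Tracking the $N$-action through this description (descending, if necessary, to a subfield over which the obstruction manifestly dies, as in Corollary~\ref{c:turull34}) then produces $\phi$, i.e.\ shows $\mu$ is a coboundary; with $\phi$ in hand the previous two paragraphs yield all the asserted properties.
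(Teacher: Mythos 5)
There is a genuine gap, and it sits exactly where the real content of the lemma lies. Your first two paragraphs are routine and essentially correct reductions: replacing $P$ by $\sigma(P)\leq S^{*}$, observing that conjugation by $N=\N_{S^{*}}(P)$ descends to automorphisms of $S(P)$, invoking Skolem--Noether, and concluding that what is needed is a lift of a map $N\to S(P)^{*}/\crp{E}^{*}$ to $S(P)^{*}$ extending $\br_P$ on $\C_{S^{*}}(P)$, i.e.\ the vanishing of an obstruction cocycle $\mu$. But the third paragraph, where this obstruction is supposed to be killed, contains no argument: ``tracking the $N$-action through Dade's classification'' and ``descending to a subfield over which the obstruction manifestly dies, as in Corollary~\ref{c:turull34}'' is an appeal, not a proof. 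Lemma~\ref{l:turull33} and Corollary~\ref{c:turull34} only give descent of the endopermutation module to a subfield and control of centralizers and Brauer quotients; neither statement produces the multiplicative splitting $\phi$, nor is it explained why the cocycle becomes a coboundary after such a descent (note also that the lemma, unlike Lemma~\ref{l:endoextainv}, carries no finiteness hypothesis on $\crp{E}$, so restricting to finite $\crp{E}$ already loses generality). The construction of precisely this homomorphism, compatible with $\br_P$, is the nontrivial fact announced by Dade, sketched by Puig~\citep{puig86}, and proved in detail by Turull~\citep[Theorem~3.11]{turull08c}; the paper does not reprove it but cites these sources, and your proposal does not supply a substitute for that argument.

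Two smaller inaccuracies in your second paragraph: it is not true in general that $S^{P}$ is primary with $S^{P}_{<P}=\J(S^{P})$ (an endopermutation module can have many non-isomorphic indecomposable summands, so $S^{P}\iso\enmo_{\crp{E}P}(V)$ need not be primary, and $\ker\br_P$ need not be nilpotent); surjectivity of $(S^{P})^{*}\to S(P)^{*}$ does hold, but for the general reason that unit groups surject along quotients of finite-dimensional algebras, and the kernel need not be a finite $p$-group (nor of the form $1+\J(S^{P})$). These are repairable, but the missing lifting argument is not: as written, the proposal assumes the conclusion at its only genuinely difficult point.
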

\begin{proof}
  \citep{puig86} or \citep[Theorem~3.11]{turull08c}.
\end{proof}
Set
\[ A= \{a \in \Aut S \mid \sigma(P)^a = \sigma(P)\}.\]
Here $\Aut S$ denotes the set of all ring automorphisms of $S$,
not just the $\crp{E}$-algebra automorphisms.
Since $A$ stabilizes $P$, it follows that
$A$ acts naturally on $S(P)$.
Lemma~\ref{l:endoextsplit} can be strengthened:
\begin{lemma}\label{l:endoextainv}
  If\/ $\crp{E}=\Z(S)$ is finite in the situation of Lemma~\ref{l:endoextsplit},
  the homomorphism $\phi\colon \N_{S^*}(P)\to S(P)$
  can be chosen such that
  $\phi(s^a)=\phi(s)^a$ for all $s\in \N_{S^*}(P)$ and
  $a\in A$.
\end{lemma}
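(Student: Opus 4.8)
The set $\mathcal X$ of homomorphisms furnished by Lemma~\ref{l:endoextsplit} is a torsor under the finite group $\operatorname{Hom}\!\bigl(\N_{S^*}(P)/\C_{S^*}(P),\crp{E}^*\bigr)$: any two such $\phi$ differ by an $\crp{E}^*$-valued homomorphism trivial on $\C_{S^*}(P)=(S^P)^*$, since $\phi(u)$ is pinned down modulo $\Z(S(P))^*=\crp{E}^*$ by the identity $\br_P(c)^{\phi(u)}=\br_P(c^u)$. The group $A$ acts on $\mathcal X$, and the plan is to produce a fixed point by descending $S$ to the prime subfield $\GF{p}$ of $\crp{E}$. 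First observe that $A\cap\operatorname{Inn}(S)=\{\operatorname{conj}_u:u\in\N_{S^*}(P)\}$, and that each $\operatorname{conj}_u$ fixes \emph{every} $\phi\in\mathcal X$: by the displayed identity $\operatorname{conj}_u$ induces $\operatorname{conj}_{\phi(u)}$ on $S(P)=\br_P(S^P)$, so $\phi(\operatorname{conj}_u(s))=\phi(u)^{-1}\phi(s)\phi(u)=\phi(s)^{\operatorname{conj}_u}$. Hence the $A$-action on $\mathcal X$ factors through $A/(A\cap\operatorname{Inn}S)$, and it suffices to find one $\phi\in\mathcal X$ invariant under a complement $\Gamma\le A$ of $A\cap\operatorname{Inn}S$.

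Such a $\Gamma$ comes from a descent. Write $S=\enmo_{\crp{E}}(V)$ with $V$ an endopermutation module for $\sigma(P)$ (recall $S$ is split). By Lemma~\ref{l:turull33} there is an endopermutation $\GF{p}[\sigma(P)]$-module $V_0$ with $V\iso V_0\tensor_{\GF{p}}\crp{E}$, so $S\iso S_{00}\tensor_{\GF{p}}\crp{E}$ as $P$-algebras, where $S_{00}=\enmo_{\GF{p}}(V_0)$ is a Dade $P$-algebra over $\GF{p}$ and $P$ acts trivially on the second tensor factor; by the uniqueness in Lemma~\ref{l:permp_basics}\,\ref{i:simplepalg} we may arrange $\sigma(P)\subseteq S_{00}$. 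Let $\Gamma=\{\mathrm{id}_{S_{00}}\tensor\gamma:\gamma\in\Gal(\crp{E}/\GF{p})\}\le\Aut S$. Each element of $\Gamma$ fixes $\sigma(P)\subseteq S_{00}$ pointwise, so $\Gamma\le A$, and $\Gamma$ maps isomorphically onto $\Aut S/\operatorname{Inn}(S)$; hence $A=(A\cap\operatorname{Inn}S)\,\Gamma$. By Lemma~\ref{l:permp_basics}\,(ii) we identify $S^P=S_{00}^P\tensor_{\GF{p}}\crp{E}$, $S(P)=S_{00}(P)\tensor_{\GF{p}}\crp{E}$ and $\br_P^S=\br_P^{S_{00}}\tensor\mathrm{id}$, with $\Gamma$ acting through the second factor.

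Apply Lemma~\ref{l:endoextsplit} to $S_{00}$ over $\GF{p}$ to obtain $\phi_{00}\colon\N_{S_{00}^*}(P)\to S_{00}(P)^*$. The Noether--Deuring theorem (isomorphism of $\GF{p}[\sigma(P)]$-modules is unaffected by extension of scalars to $\crp{E}$) shows that every automorphism of $\sigma(P)$ induced by conjugation from $S^*$ is already induced from $S_{00}^*$, whence $\N_{S^*}(P)=\N_{S_{00}^*}(P)\cdot\C_{S^*}(P)$. Define
\[\phi(uc)=\phi_{00}(u)\,\br_P(c)\qquad\bigl(u\in\N_{S_{00}^*}(P),\ c\in\C_{S^*}(P)=(S^P)^*\bigr).\]
It must be checked that $\phi$ is well defined (reduce to $uc=1$ and the fact that $\phi_{00}$ extends $\br_P$ on $(S_{00}^P)^*$), that it is multiplicative and extends $\br_P^S$, and that it satisfies $\br_P(c^s)=\br_P(c)^{\phi(s)}$; the crucial point is that $\phi_{00}$'s conjugation identity, known for $c\in S_{00}^P$, extends $\crp{E}$-linearly to all $c\in S^P=S_{00}^P\tensor_{\GF{p}}\crp{E}$ because $\phi_{00}(u)$ commutes with the central factor $1\tensor\crp{E}$. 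Thus $\phi\in\mathcal X$, and $\phi$ is $\Gamma$-invariant: for $\gamma\in\Gal(\crp{E}/\GF{p})$ one has $(uc)^\gamma=u\,c^\gamma$, while $\phi_{00}(u)$ and $\br_P$ are $\gamma$-fixed, so $\phi((uc)^\gamma)=\phi_{00}(u)\br_P(c)^\gamma=\phi(uc)^\gamma$. Together with the first paragraph and $A=(A\cap\operatorname{Inn}S)\,\Gamma$ this yields $\phi(s^a)=\phi(s)^a$ for all $a\in A$.

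The main obstacle is the batch of compatibility verifications in the last step --- making the formula $\phi(uc)=\phi_{00}(u)\br_P(c)$ consistent across the non-unique factorizations $s=uc$ and compatible with conjugation. This rests on the propagation of $\phi_{00}$'s conjugation identity from $S_{00}^P$ to $S^P$ under base change; it is this, not any general cohomology-vanishing statement (which can in fact fail for $H^1$ of the relevant $\Gal(\crp{E}/\GF{p})$-module), that makes the descent argument work. A minor point: throughout, $\N_{S^*}(P)$ abbreviates $\N_{S^*}(\sigma(P))$, so if $\sigma$ is not faithful one simply works with the $p$-group $\sigma(P)$ in place of $P$ from the start.
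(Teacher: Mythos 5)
Your argument is correct, but it takes a genuinely different route from the paper. The paper enlarges the algebra: it views the simple $S$-module $V$ as an endopermutation module over the prime field (Corollary~\ref{c:turull34}), sets $T=\enmo_{\GF{p}}V\supseteq S$, applies Lemma~\ref{l:endoextsplit} to $T$, checks via $\C_{T(P)}(\br_P(\Z(S)))=S(P)$ that the resulting homomorphism maps $\N_{S^*}(P)$ into $S(P)$, and then disposes of \emph{all} of $A$ at one stroke: by Skolem--Noether over $\GF{p}$ every $a\in A$ is conjugation by some $t\in\N_{T^*}(P)$, so the equivariance $\phi(s^a)=\phi(s)^a$ falls out of the conjugation identity for $\phi$ on $T$. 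You instead shrink to an $\GF{p}$-form: you pull $S_{00}=\enmo_{\GF{p}}(V_0)$ inside $S$ via Lemma~\ref{l:turull33}, observe that inner automorphisms in $A$ are automatically compatible with any admissible $\phi$, split $A=(A\cap\operatorname{Inn}S)\Gamma$ with $\Gamma$ the Galois complement attached to $S_{00}$, and build $\phi$ from $\phi_{00}$ on $S_{00}$ by $\phi(uc)=\phi_{00}(u)\br_P(c)$, using Noether--Deuring to get the factorization $\N_{S^*}(P)=\N_{S_{00}^*}(P)\,\C_{S^*}(P)$; the load-bearing verification you correctly single out is the $\crp{E}$-linear propagation of the conjugation identity from $S_{00}^P$ to $S^P=S_{00}^P\tensor_{\GF{p}}\crp{E}$, which does go through since $\phi_{00}(u)$ and $\br_P^S=\br_P^{S_{00}}\tensor\mathrm{id}$ commute with the central factor $1\tensor\crp{E}$. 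Both proofs rest on the same descent input (Lemma~\ref{l:turull33}, used in the paper through Corollary~\ref{c:turull34}) and on Lemma~\ref{l:endoextsplit} applied over $\GF{p}$; the paper's version is shorter because making all of $A$ inner in the larger algebra $T$ eliminates your well-definedness, multiplicativity and Noether--Deuring checks, while your version makes explicit the useful facts that inner elements of $A$ cost nothing and that $\phi$ can be chosen ``defined over $\GF{p}$'' modulo the central $\br_P$-part, with the Galois descent isolated in the complement $\Gamma$.
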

\begin{proof}
  Let $V_S$ be a simple $S$-module.
  Then $V$ is an endopermutation module for $P$ over $\crp{E}$.
  It is still an endopermutation module when viewed as a module over
  the prime field
  $\GF{p}$, by Corollary~\ref{c:turull34}.
  Let $T= \enmo_{\GF{p}} V$ and view $S\iso\enmo_{\crp{E}} V$ as subset of $T$.
  Then $\C_T(S)=\crp{E}$ and $\C_{T}(\crp{E})=S$.

  The inclusion $S^P\subseteq T^P$ induces an injection
  $S(P)\into T(P)$
  by Corollary~\ref{c:turull34}, and
  $\C_{T(P)}(S(P))=\crp{E}$.

  By Lemma~\ref{l:endoextsplit} applied to $T$,
  there is a homomorphism
  $\phi\colon \N_{T^*}(P)\to T(P)$.
  We claim that $\phi(\N_{S^*}(P))\subseteq S(P)$.
  Let $s\in \N_{S^*}(P)$ and $z\in \Z(S)$.
  Then $\br_P(z)^{\phi(s)}=\br_P(z^s)=\br_P(z)$, so
  $\phi(s)$ centralizes $\br_P(\Z(S))\iso\Z(S)$.
  Since $\C_{T(P)}(\Z(S))=S(P)$, the claim follows.

  Now let $a\in A$. By the Skolem-Noether
  theorem~\citep[3.14]{farbNA}, there is $t\in T^*$ such that
  $s^a = s^t$ for all $s\in S$.
  Since $P^a=P$, it follows that $t\in \N_{T^*}(P)$.
  Thus for $s\in \N_{S^*}(P)$,
  \[\phi(s^a)= \phi(s^t)= \phi(s)^{\phi(t)}.\]
  Now $s\in \N_{S^*}(P)$ given,
  there is $c\in S^P\subseteq T^P$
  such that $\phi(s)= \br_P(c)$,
  as $\br_P\colon S^P \to S(P)$ is surjective.
  Then by Lemma~\ref{l:endoextsplit} applied to $T$ we have
  \[ \phi(s)^{\phi(t)} = \br_P(c)^{\phi(t)} = \br_P(c^{t})
      = \br_P(c^a) = \br_P(c)^a = \phi(s)^a,\]
  where the second last equation follows from the definition of the
  action of $A$ on $S(P)$.
\end{proof}

We work now in the situation of Theorem~\ref{t:glaubmag}.
We use the notation introduced at the beginning of the section
before the statement of Theorem~\ref{t:glaubmag}.
We set $S=(i\crp{F}_p Ki)^L$ and $Z=\Z(S)\iso \crp{E}$.
We begin with a lemma.
\begin{lemma}\label{l:sdade}
  $S$ is a Dade $P$-algebra with $S(P)\iso \crp{E}$.
  (Thus $\dim_{\crp{E}}S \equiv 1\mod p$.)
  We may identify
  $\br_P^S$ with the restriction of
  $\br_P^{\crp{F}K}$ to $S$.
\end{lemma}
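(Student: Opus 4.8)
The plan is to realise $S$ inside the $P$-algebra $\crp{E}K\overline{e_{\theta}}$, which is itself a Dade $P$-algebra, and then to read off $S(P)$ from a tensor decomposition. First I would pass from $\crp{F}_p$ to $\crp{E}$: by the characteristic-$p$ analogue of Lemma~\ref{l:fe_iso}, multiplication by $\overline{e_{\theta}}$ is an isomorphism of $\crp{F}_p$-algebras $\crp{F}_p K e\iso A:=\crp{E}K\overline{e_{\theta}}$, it is $P$-equivariant, and since it carries the reduced central character of $\Z(\crp{F}_p K e)$ to the inclusion $\crp{E}\overline{e_{\theta}}\hookrightarrow A$ it becomes an isomorphism of $\crp{E}$-algebras once both centres are identified with $\crp{E}$. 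Under it $i\mapsto\overline{e_{\theta}e_{\phi}}=:i_0$, the idempotent $i$ being $H$-fixed and hence $P$-fixed, so $S=(i\crp{F}_p K i)^L\iso(i_0 A i_0)^L=:S_{\crp{E}}$ as $\crp{E}$-algebras with $P$-action, and it suffices to prove the statements for $S_{\crp{E}}$ while keeping track of Brauer homomorphisms.

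Second, I would note that $A\iso\mat_{\theta(1)}(\crp{E})$ is a Dade $P$-algebra. Here $\overline{e_{\theta}}$ is $M$-fixed because $\theta$ is $M$-invariant, hence $P$-fixed, so $A$ is a $P$-module direct summand of the permutation $P$-module $\crp{E}K$ and therefore a permutation $P$-algebra by Lemma~\ref{l:permp_basics}(i); and $\br_P^{\crp{E}K}(\overline{e_{\theta}})\neq0$ because $P$ is a defect group of $\overline{e_{\theta}}$. By the standard compatibility of the Brauer construction with truncation by a central idempotent, $A(P)\iso\br_P^{\crp{E}K}(\overline{e_{\theta}})\,\crp{E}L$, with $\br_P^A$ the corresponding restriction of $\br_P^{\crp{E}K}$, and $\br_P^{\crp{E}K}(\overline{e_{\theta}})=\overline{e_{\phi}}$ is exactly the block correspondence defining $\phi$, so $A(P)\iso\crp{E}L\overline{e_{\phi}}\iso\mat_{\phi(1)}(\crp{E})$. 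The idempotent $i_0$ is also $P$-fixed (as $\overline{e_{\phi}}$ is, since $P$ centralises $L$ and hence $\crp{E}L$), so $i_0 A i_0=i_0\crp{E}K i_0$ is again a Dade $P$-algebra (a Peirce summand of $A$, a matrix ring, with $(i_0 A i_0)(P)\iso\br_P^A(i_0)\,A(P)\,\br_P^A(i_0)\iso\mat_{\phi(1)}(\crp{E})$, using $\br_P^A(i_0)=\overline{e_{\phi}}$).

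Third comes the computation of $S_{\crp{E}}(P)$. Since $L=\C_K(P)$ and $P$ centralise one another and meet trivially, $LP=L\times P$ acts on $\crp{E}K$ and on the Peirce summand $i_0\crp{E}K i_0$; hence $i_0 A i_0$ is a permutation $L\times P$-module and its $L$-fixed subalgebra $S_{\crp{E}}$ is a permutation $P$-module, so $S_{\crp{E}}$ is a permutation $P$-algebra (and central simple over $\crp{E}$ of dimension $n^2$ by the $\crp{E}$-version of Lemma~\ref{l:StensorFLfiFKi}). Writing $W:=Vi_0$ for $V$ the simple $A$-module, the image of $\crp{E}L$ in $i_0 A i_0\iso\enmo_{\crp{E}}(W)$ is a copy of $\enmo_{\crp{E}}(X)\iso\mat_{\phi(1)}(\crp{E})$ (with $X$ the simple $\crp{E}L\overline{e_{\phi}}$-module); it carries the trivial $P$-action because $P$ centralises $L$, and its centraliser is $\enmo_{\crp{E}L}(W)=(i_0 A i_0)^L=S_{\crp{E}}$. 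So by Lemma~\ref{l:central}, $i_0 A i_0\iso\enmo_{\crp{E}}(X)\tensor_{\crp{E}}S_{\crp{E}}$ as $P$-algebras, the first factor with trivial $P$-action, and applying Lemma~\ref{l:permp_basics}(ii) together with $\enmo_{\crp{E}}(X)(P)=\enmo_{\crp{E}}(X)$ yields $\mat_{\phi(1)}(\crp{E})\iso(i_0 A i_0)(P)\iso\enmo_{\crp{E}}(X)\tensor_{\crp{E}}S_{\crp{E}}(P)$, forcing $S_{\crp{E}}(P)\iso\crp{E}$. In particular $S_{\crp{E}}(P)\neq0$, so $S_{\crp{E}}$, hence $S$, is a Dade $P$-algebra with $S(P)\iso\crp{E}$, and $\dim_{\crp{E}}S=\dim_{\crp{E}}S_{\crp{E}}\equiv\dim_{\crp{E}}S_{\crp{E}}(P)=1\pmod{p}$ because a permutation $\crp{E}P$-module has $\crp{E}$-dimension congruent mod $p$ to that of its Brauer quotient. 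The identification of $\br_P^S$ with the restriction of $\br_P^{\crp{F}_p K}$ to $S$ then drops out of chasing the chain $\crp{F}_p K\leftarrow\crp{F}_p K e\iso A\supseteq A^L\supseteq i_0 A^L i_0=S_{\crp{E}}$ through the usual compatibilities of the Brauer homomorphism with subalgebras of fixed points and with idempotent truncation.

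The step I expect to be the main obstacle is exactly this last bookkeeping of Brauer homomorphisms: making the identifications of $A(P)$, $(i_0 A i_0)(P)$ and $S_{\crp{E}}(P)$, and of the successive Brauer maps restricting to one another, genuinely canonical and not merely numerical — in particular verifying that the Brauer quotient of the $L$-fixed subalgebra $(i_0 A i_0)^L$ sits compatibly inside that of $i_0 A i_0$, which is where the permutation-module structure of the $L\times P$-action has to be used, and extracting the isomorphism $S_{\crp{E}}(P)\iso\crp{E}$ from Lemma~\ref{l:permp_basics}(ii) rather than from a dimension count alone.
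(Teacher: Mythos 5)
Your route differs from the paper's: you transport $S$ to $S_{\crp{E}}=(i_0\crp{E}Ki_0)^L$ with $i_0=\overline{e_{\theta}e_{\phi}}$, show $A=\crp{E}K\overline{e_{\theta}}$ and $i_0Ai_0$ are Dade $P$\nbd algebras with $(i_0Ai_0)(P)\iso\crp{E}L\overline{e_{\phi}}$, and then force $S_{\crp{E}}(P)\iso\crp{E}$ from the decomposition $i_0Ai_0\iso\enmo_{\crp{E}}(X)\tensor_{\crp{E}}S_{\crp{E}}$ and Lemma~\ref{l:permp_basics}(ii), whereas the paper computes $\br_P(S^P)=\Z(\crp{F}Lf)\iso\crp{E}$ directly. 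Most of this is sound, but one step fails as written: you deduce that $i_0Ai_0$ is a permutation $L\times P$\nbd module because it is a (Peirce) direct summand of the permutation module $\crp{E}K$. The direct-summand statement of Lemma~\ref{l:permp_basics}(i) holds for the $p$\nbd group $P$ but not for $L\times P$, which is in general not a $p$\nbd group; a summand of a permutation $L\times P$\nbd module is only a $p$\nbd permutation module. Hence your argument that $S_{\crp{E}}$ is a permutation $P$\nbd algebra (which both the tensor step and the congruence $\dim_{\crp{E}}S\equiv 1 \bmod p$ require) is not justified. It is repairable: take $L$\nbd fixed points first, so $(\crp{E}K)^L$ is an honest permutation $P$\nbd module on the $L$\nbd orbit sums of $K$, and then truncate by the $P$\nbd fixed idempotent $i_0\in(\crp{E}K)^L$; then $S_{\crp{E}}=i_0(\crp{E}K)^Li_0$ is a $P$\nbd module direct summand of $(\crp{E}K)^L$ and Lemma~\ref{l:permp_basics}(i) applies to $P$. (The paper instead writes $i\crp{F}Ki\iso S\tensor_Z\crp{F}Lf$ as a direct sum of copies of $S$ indexed by a $Z$\nbd basis of $\crp{F}Lf$.)

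The second problem is the final assertion, which you defer to ``usual compatibilities'': compatibility of the Brauer construction with truncation by $P$\nbd fixed idempotents is indeed standard, but compatibility with passage to a fixed-point subalgebra $B=A^L\subseteq A$ is not a general fact. One must prove $B^P\cap\sum_{Q<P}A^P_Q\subseteq\sum_{Q<P}B^P_Q$, and this is exactly where the paper does its real work: it uses that $S$ is a $P$\nbd module direct summand of $i\crp{F}Ki$, so the $P$\nbd equivariant projection $\pi$ turns an element $\sum_{Q<P}\T_Q^P(a_Q)\in S^P$ into $\sum_{Q<P}\T_Q^P(\pi(a_Q))\in\sum_{Q<P}S_Q^P$. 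In your setup the same conclusion can be extracted from your own tensor decomposition, because the canonical isomorphism in Lemma~\ref{l:permp_basics}(ii) is induced by the Brauer maps of the two factors, so that for $b\in S_{\crp{E}}^P$ one has $\br_P^{i_0Ai_0}(b)=1\tensor\br_P^{S_{\crp{E}}}(b)$ and hence $\ker\br_P^{S_{\crp{E}}}=S_{\crp{E}}^P\cap\ker\br_P^{i_0Ai_0}$; but this has to be stated and, again, presupposes the repaired permutation statement above. As it stands, the part you yourself flag as the main obstacle is precisely the assertion of the lemma that Theorem~\ref{t:glaubmag} and Corollary~\ref{c:glaubbr} later use (it is what makes $\br_P(\sigma(h))=f$ meaningful), and it is missing from your argument.
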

\begin{proof}
  $\crp{F}K$ has a basis that is permuted by $P$.
  The same is thus true for the direct summand $i\crp{F}Ki$.
  By assumption, $P$ centralizes $Z\iso\Z(\crp{F}Ke)$.
  Let $B$ be some $Z$\nbd basis of $\crp{F}Lf$.
  From $i\crp{F}Ki \iso S\tensor_{Z}\crp{F}L f$ it follows that
  $i\crp{F}Ki \iso \bigoplus_{b\in B} S \tensor b$
  as $\crp{F}P$\nbd module (remember that $L=\C_K(P)$).
  Thus $S\iso S\tensor b$ is a direct summand of $i\crp{F}Ki$ and
  thus a  permutation $P$\nbd module.

  Let $\beta=\br_P^{\crp{F}K}\colon(\crp{F}K)^P\to \crp{F}L$.
  Note that $\beta(i)=\beta(ef)=f^2=f$ and that
  $\beta(S^P)\subseteq \Z(\crp{F}Lf)$, as $L$ centralizes $S$.
  It follows
  $S^P/(S^P\cap\ker \beta) \iso \beta(S^P)
                           = \Z(\crp{F}L f)
                           \iso \crp{E}$.
  It remains to show that
  $S^P\cap \ker \beta   =\ker \br_P^S$.
  It is clear that
  \[\ker \br_P^S
     =\sum_{Q<P}S_Q^P
     \subseteq \sum_{Q<P} (\crp{F}K)_Q^P
     = \ker \beta.
  \]
  Conversely, suppose
  \[ s=\sum_{Q<P} \T_Q^P(a_Q) \in S^P \cap \ker \beta
     \quad \text{with $a_Q\in (\crp{F}K)^Q$.}\]
  Since $s=isi= \sum_{Q<P} \T_Q^P(ia_Q i)$,
  we may assume $a_Q\in (i\crp{F}K i)^Q$.
  Now $S$ is a direct summand of $i\crp{F}K i$ as
  $P$-module. Letting $\pi\colon i\crp{F}K i \to S$ be the
  corresponding projection, we see
  \[ s = \pi(s) = \sum_{Q<P} \T_Q^P(\pi(a_Q)) \in \sum_{Q<P} S_Q^P
       = \ker \br_P^S\]
  as was to be shown.
  The proof is finished.
\end{proof}
\begin{proof}[Proof of Theorem~\ref{t:glaubmag}]
  Suppose we have a counterexample to the theorem with
  $\abs{K/L}$ minimal.
  Then clearly $L<K$.
  Set $C=\C_P(K)$, so that $C<P$.
  Let $P_0/C$ be a chief factor of $H/C$ with $P_0\leq P$
  and set $L_0=\C_K(P_0)$.
  As $P_0>C$, we have $L_0<K$.
  Now the composition
  \[ (\crp{E}K)^P \xrightarrow{\br_{P_0}} (\crp{E}L_0)^{P/P_0}
                \xrightarrow{\br_{P/P_0}} \crp{E}L
  \]
  gives just $\beta=\br_P$.
  Let $\eta\in \Irr L_0$ be defined by
  $\br_{P_0}(\overline{e_{\theta}})=\overline{e_{\eta}}$.
  Set $M_0= KP_0$ and $H_0=\N_G(P_0)$.
  Note that $M_0\nteq G$.
  We have decomposed the configuration
  $(G,H,K,L,\theta,\phi)$ into two configurations:
  The configuration
  $(G,H_0, K, L_0, \theta, \eta)$ with $M_0$ and $P_0$
  instead of $M$ and $P$, and the
  configuration
  $(H_0, H, L_0, L, \eta, \phi)$ with $PL_0$ and $P$ instead of $M$
  and $P$ (see Figure~\ref{fig:glaubdecomp}).
  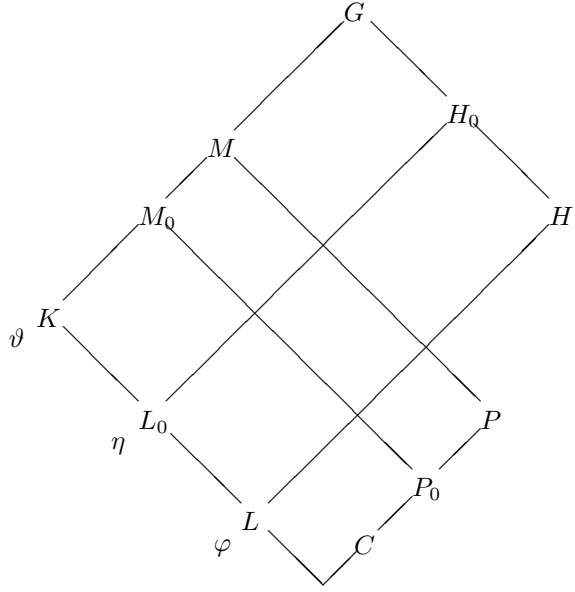
\begin{figure}[ht]
  \centering
  \setlength{\unitlength}{0.9mm}
  \begin{picture}(100,85)(-8,-1)
    \put(2,42){\line(1,1){11}}  
    \put(17,57){\line(1,1){6}}  
    \put(27,67){\line(1,1){16}} 
    \put(17,27){\line(1,1){41}}
    \put(32,12){\line(1,1){41}}
    \put(40,0){\line(1,1){5}}  
    \put(48,8){\line(1,1){5}}    
    \put(57,17){\line(1,1){6}}   
    \put(2,38){\line(1,-1){11}}  
    \put(17.7,22.3){\line(1,-1){10.3}}  
    \put(32,8){\line(1,-1){8}}    
    \put(17,53){\line(1,-1){36}}
    \put(27,63){\line(1,-1){36}}
    \put(47,83){\line(1,-1){11}} 
    \put(62,68){\line(1,-1){11}}  
    \put(-2,38){$K$}
    \put(-6,35){$\theta$}
    \put(13,23){$L_0$}
    \put(9,20){$\eta$}
    \put(28,8){$L$}
    \put(24,5){$\phi$}
    \put(44.5,4.5){$C$}
    \put(13,53){$M_0$}
    \put(53,13){$P_0$}
    \put(23,63){$M$}
    \put(63,23){$P$}
    \put(43,83){$G$}
    \put(58,68){$H_0$}
    \put(73,53){$H$}
  \end{picture}
  \caption{Proof of Theorem~\ref{t:glaubmag}}\label{fig:glaubdecomp}
  \end{figure}

  If $L<L_0$, then induction applies and yields magic crossed
  representations $\sigma_1$ and $\sigma_2$
  for the two configurations
  $(G,H_0, K, L_0, \theta, \eta)$ and
  $(H_0, H, L_0, L, \eta, \phi)$.
  Now note that all the hypotheses made in Section~\ref{sec:induc} are
  met.
  We apply Proposition~\ref{p:induc} to conclude that there is a
  magic crossed representation
  $\sigma\colon H\to S$, such that
  \[ \sigma(h)j = \sigma_1(h)\sigma_2(h)
     \quad
     \text{with}
     \quad
     j   = \sum_{\gamma}(\overline{e_{\theta}e_{\eta}e_{\phi}})^{\gamma}.
  \]
  By induction, we may also assume that
  $\br_{P_0}(\sigma_1(h))= \overline{e_{(\eta, \rats_p)}}$
  for $h \in \C_G(P_0)$
  and
  $\br_{P}(\sigma_2(h))= \overline{e_{(\phi, \rats_p)}}=f$ for
  $h\in \C_G(P)$.
  As $\br_P(\overline{e_{\theta}})=\br_P(\overline{e_{\eta}})=
  \overline{e_{\phi}}$, it follows that
  $\br_P(j)= f$.
  Thus for $h\in \C_G(P)$,
  \begin{align*}
    \br_P(\sigma(h))
      = \br_P( \sigma(h)j )
      &= \br_P( \br_{P_0}( \sigma(h) j) )
      \\
      &= \br_P( \br_{P_0}( \sigma_1(h)\sigma_2(h) ) )
      \\
      &= \br_P( \overline{e_{(\eta, \rats_p)}} \sigma_2(h))
      \\
      &= f.
  \end{align*}
  It follows that our configuration is not a counterexample.

  Thus we can assume $L=L_0$. We may replace $P$ by $P_0$ and assume
  that $P/C$ is a chief factor of $H$.
  In particular, we can assume that $P/C$ is  abelian.

  By Lemma~\ref{l:permp_basics}, Part~\ref{i:simplepalg},
  there is a unique $\sigma\colon P/C\to S$ which induces the action
  of $P$ on $S$.
  Let $V$ be a simple $S$\nbd module, which becomes an
  endopermutation $\crp{E}P$\nbd module via $\sigma$.
  By Lemma~\ref{l:turull33}, there is
  an $\crp{F}P$\nbd module $V_0$ such that
  $V\iso V_0\tensor_{\crp{F}}\crp{E}$.
  We may assume $V_0\leq V$.
  Let $S_0=\{ s\in S \mid V_0 s\subseteq V_0\}$.
  Then $\sigma(P)\subseteq S_0$ and $S_0\iso \mat_n(\crp{F})$.

  The action of $H/L$ on $S$ yields a projective crossed
  representation
  $\widetilde{\sigma}\colon H/L\to S^*$, that is, for each
  $h\in  H$ there is an element
  $\widetilde{\sigma}(h)=\widetilde{\sigma}(Lh)\in S^{*}$ such
  that
  $s^h = s^{\widetilde{\sigma}(h)}$ for all $s\in S_0$,
  that is, $\widetilde{\sigma}$ is crossed with respect to
  $S_0$.
  (Note that we may assume $\widetilde{\sigma}(x)=\sigma(x)$ for
  $x\in P$.)

  Let $x\in P$ and $h\in H$. By the uniqueness of $\sigma$ on $P$,
  we must have $\sigma(x)^h = \sigma(x^h)$.
  It follows that
  \[ \sigma(x)^{\widetilde{\sigma}(h)}
     = \sigma(x)^h
     = \sigma(x^h)
     \in \sigma(P).\]
  Thus $\widetilde{\sigma}(h)\in \N_{S^*}(\sigma(P))$.

  Let $\eps\colon H/L \to \Aut S$ be defined by
  $(s_0 z)^{\eps(h)}= s_0 \cdot z^h$ for
  $s_0\in S_0$ and $z\in \Z(S)\iso\crp{E}$.
  To $\widetilde{\sigma}$ belongs a cocycle
  $\alpha\in Z^2(H/L, \Z(S)^*)$ such that
  \[ \widetilde{\sigma}(x)^{\eps(y)}\widetilde{\sigma}(y)
      = \alpha(x,y)\widetilde{\sigma}(xy)
      \quad \text{for all $x$, $y\in H/L$}.
  \]
    Now we apply the homomorphism
    $\phi\colon \N_{S^*}(\sigma(P))\to S(P)^*\iso\crp{E}^*$
  of Lemmas~\ref{l:endoextsplit}
  and~\ref{l:endoextainv}
  to this equation.
  It follows that
  \[ \phi(\widetilde{\sigma}(x))^{\eps(y)}
     \phi(\widetilde{\sigma}(y))
     = \alpha(x,y)
       \phi(\widetilde{\sigma}(xy)).\]
  (Here,
  $\phi(\widetilde{\sigma}(x)^{\eps(y)})
   = \phi(\widetilde{\sigma}(h))^{\eps(y)}$
  follows from Lemma~\ref{l:endoextainv}.)
  Thus
  \[ \sigma(h):= \phi(\widetilde{\sigma}(h))^{-1}
                 \widetilde{\sigma}(h)\]
  defines a magic crossed representation
  $\sigma\colon H/L \to S$.
  Moreover, for this choice of
  $\sigma$ we have
  $\phi(\sigma(h))= 1_{S(P)}$.
  Here, we may identify $S(P)$ with
  $\Z(\crp{F}L f)$.
  Since $\phi$ extends
  $\br_P$, it follows that for
  $c\in \C_G(P)$ we have
  $\br_P(\sigma(c))=\phi(\sigma(c))=f$.
  The proof is finished.
\end{proof}
\printbibliography
\end{document}